\newcommand{\PiCR}{\Pi_{\rm CR}}
\newcommand{\CRlam}{\lambda_{\rm CR}}
\newcommand{\CRu}{u_{\rm CR}}
\newcommand{\CRp}{p_{\rm CR}}
\newcommand{\CRuS}{u_{\rm CR}^{\lambda \Pi_h^0 u}}
\newcommand{\CRuSt}{u_{\rm CR}^{\lambda u}}
\newcommand{\CRpS}{p_{\rm CR}^{\lambda \Pi_h^0 u}}
\newcommand{\CRpSt}{p_{\rm CR}^{\lambda  u}}
\newcommand{\VCR}{V_{\rm CR}}
\newcommand{\ZCR}{Z_{\rm CR}}
\newcommand{\CR}{{\rm CR}}
\newcommand{\PiECR}{\Pi_{\rm ECR}}
\newcommand{\ECRlam}{\lambda_{\rm ECR}}
\newcommand{\ECRu}{u_{\rm ECR}}
\newcommand{\ECRp}{p_{\rm ECR}}
\newcommand{\ECRuS}{u_{\rm ECR}^{\lambda \Pi_h^0 u}}
\newcommand{\ECRuSt}{u_{\rm ECR}^{\lambda u}}
\newcommand{\ECRpS}{p_{\rm ECR}^{\lambda \Pi_h^0 u}}
\newcommand{\ECRpSt}{p_{\rm ECR}^{\lambda u}}
\newcommand{\VECR}{V_{\rm ECR}}
\newcommand{\ZECR}{Z_{\rm ECR}}
\newcommand{\ECR}{{\rm ECR}}
\newcommand{\PiRT}{\Pi_{\rm RT}}
\newcommand{\RTuS}{u_{\rm RT}^{\lambda u}}
\newcommand{\RTsig}{\sigma_{\rm RT}}
\newcommand{\RTsigS}{\sigma_{\rm RT}^{\lambda  u}}
\newcommand{\RTsigSt}{\sigma_{\rm RT}^{\lambda  u}}
\newcommand{\URT}{U_{\rm RT}}
\newcommand{\SIGRT}{\Sigma_{\rm RT}}
\newcommand{\phiRT}{\phi_{\rm RT}}
\newcommand{\RT}{{\rm RT}}
\newcommand{\bmx}{\boldsymbol{x}}
\newcommand{\bmn}{\boldsymbol{n}}
\newcommand{\bmt}{\boldsymbol{t}}
\newcommand{\bmp}{\boldsymbol{p}}
\newcommand{\bmm}{\boldsymbol{m}}
\newcommand{\MK}{\boldsymbol{M}_K}
\newcommand{\Kh}{\boldsymbol{K}_h}
\newcommand{\bmu}{u}
\newcommand{\bmv}{v}
\newcommand{\bmw}{w}
\newcommand{\bmV}{V}
\newcommand{\bmVh}{V_h}
\newcommand{\Q}{{\rm Q}}
\newcommand{\Qh}{\Q_h}
\newcommand{\Zh}{Z_h}
\newcommand{\bmZ}{Z}
\newcommand{\ba}{\mathbf{A}}
\newcommand{\bc}{\mathbf{C}}
\newcommand{\bd}{\mathbf{D}}
\newcommand{\be}{\mathbf{E}}
\newcommand{\Bf}{\mathbf{F}}
\newcommand{\bsa}{\mathbf{a}}
\newcommand{\bid}{\mathbf{I}_2}
\newcommand{\piCRi}{\phi_{\rm CR}^i}
\newcommand{\etaij}{\eta_{\rm RT}^{ij}}
\newcommand{\gamij}{\gamma_{\rm RT}^{ij}}
\newcommand{\zetai}{\zeta_{\rm CR}^{i}}
\newcommand{\Div}{{\nabla\cdot\,}}
\newcommand{\Divh}{{\nabla_h\cdot\,}}
\newcommand{\Tr}{{\rm tr}}
\newcommand{\Dev}{{\rm dev\,}}
\newcommand{\X}{{\rm X}}
\newcommand{\T}{G}
\newcommand{\R}{\mathbb{R}}
\newcommand{\Rd}{\R^2}
\newcommand{\Rdd}{\R^{2\times 2}}
\newcommand{\Nd}{\mathbb{N}^2}
\newcommand{\Th}{\mathcal{T}_h}
\newcommand{\Fh}{\mathcal{E}_h}
\newcommand{\cE}{\mathcal{E}}
\newcommand{\F}{e}
\newcommand{\Pk}{\mathbb{P}_{k}}
\newcommand{\bbP}{\mathbb{P}}
\newcommand{\Hdivd}{H({\rm div},\Omega,\R^{2})}
\newcommand{\bmtau}{\tau}
\newcommand{\Pih}{\Pi_h^0}
\newcommand{\PiK}{\Pi_K^0}
\newcommand{\Piu}{\Pi_h^u}
\newcommand{\PiuCR}{\Pi_{\rm CR}^u}
\newcommand{\PiuECR}{\Pi_{\rm ECR}^u}
\newcommand{\Pip}{\Pi_h^p}
\newcommand{\balpha}{{{\alpha}}}
\newcommand{\dx}{\,{\rm dx}}
\newcommand{\ds}{\,{\rm ds}}
\title{An example}
\begin{document}

\ensubject{fdsfd}%¶þŒ¶Ñ§¿Æ

%%%%%%%%%%%%%%%%%%%%%%%%%%%%%%%%%%%%%%%%%%%%%%%%%%%%%%%
%%% Authors do not modify the information below
%%% ×÷Õß²»ÐèÒªÐÞžÄŽËŽŠÐÅÏ¢
%%% ÓÐ×šÌâÃû³ÆÊ±, œ«µÚÒ»ÐÐµÄ{}×¢ÊÍµô, Ê¹ÓÃµÚ¶þÐÐ
%\ArticleType{ }%ÀžÄ¿
%\SpecialTopic{Progress of Projects Supported by NSFC}%×šÌâ
%\SubTitle{Dedicated to Professor Yang Lo on the Occasion of his {\rm 70}th Birthday}%×š¿¯ËµÃ÷
\Year{2022}
\Month{January}%
\Vol{65}
\No{1}
\BeginPage{1} %
\DOI{ }
%\ReceiveDate{January 1, 2022}
%\AcceptDate{January 1, 2022}
%\OnlineDate{January 1, 2022}
%%%%%%%%%%%%%%%%%%%%%%%%%%%%%%%%%%%%%%%%%%%%%%%%%%%%%%%

%%% title: ±êÌâ
%%%   \title{title}{title for citation}
\title[]{Supercloseness and asymptotic analysis of the Crouzeix-Raviart and enriched Crouzeix-Raviart elements  for the Stokes problem}
{Supercloseness and asymptotic analysis of the Crouzeix-Raviart and enriched Crouzeix-Raviart elements  for the Stokes problem}%%ºó±ß»šÀšºÅÊÇÃŒÌâ

%%% Corresponding author: ÍšÐÅ×÷Õß
%%%   \author[number]{Full name}{{email@xxx.com}}
%%% General author: Ò»°ã×÷Õß
%%%   \author[number]{Full name}{}
\author[1,2]{Wei Chen}{{1901110037@pku.edu.cn}}
\author[1,2]{Hao Han}{{hanhao@pku.edu.cn}}
\author[3,$\ast$]{Limin Ma}{{limin18@whu.edu.cn} \thanks{The first and the second authors are supported by NSFC project 12288101. The third author is supported by NSFC project 12301523 and the Fundamental Research Funds for the Central Universities 413000117. }} 

%%% Author information for page head. Ò³ÃŒÖÐµÄ×÷ÕßÐÅÏ¢
%%% ÈôŽËŽŠÖž¶šÒÔŽËŽŠÎª×Œ, ·ñÔòÖ±œÓµ÷ÓÃauthorÐÅÏ¢
\AuthorMark{ }

%%% Authors for citation. Ê×Ò³ÒýÓÃÖÐµÄ×÷ÕßÐÅÏ¢
%%% ÈôŽËŽŠÖž¶šÒÔŽËŽŠÎª×Œ, ·ñÔòÖ±œÓµ÷ÓÃauthorÐÅÏ¢
\AuthorCitation{ }

%%% Address. µØÖ·
%%%   \address[number]{Address, City {\rm Postcode}, Country}
\address[1]{LMAM and School of Mathematical Sciences, Peking University,  Beijing {\rm100871}, People's Republic of China} 
\address[2]{Chongqing Research Institute of Big Data, Peking University, Chongqing {\rm 401121}, People's Republic of China} 
\address[3]{School of Mathematics and Statistics, Wuhan University, Wuhan {\rm 430072}, People's Republic of China} 

%

%%% Abstract. ÕªÒª
\abstract{
%\xiaosihao 
For the Crouzeix-Raviart and enriched Crouzeix-Raviart elements,  asymptotic expansions of eigenvalues of the Stokes operator are derived by establishing two pseudostress interpolations, which admit a full one-order supercloseness with respect to the numerical velocity and the pressure, respectively. The design of these interpolations overcomes the difficulty caused by the lack of supercloseness of the canonical interpolations for the two nonconforming elements, and leads to an intrinsic and concise asymptotic analysis of numerical eigenvalues, which proves an optimal superconvergence of eigenvalues by the extrapolation algorithm. Meanwhile,  an optimal superconvergence of postprocessed approximations for the Stokes equation is proved by use of this supercloseness. Finally, numerical experiments are tested to verify the theoretical results.
} 

%%% Keywords. ¹ØŒüŽÊ
\keywords{supercloseness, superconvergence, nonconforming finite element, Stokes problem, eigenvalue problem}

\MSC{65N30}

\maketitle

\section{Introduction}
Extrapolation algorithm is an important tool to improve the accuracy of exact solutions, and attracts lots of interest, see for instance \cite{Blum1990Finite,Chen2007Asymptotic,Ding1990quadrature,Jia2010Approximation,MR2126582,Lin2008New,Lin2009new,Lin2007Finite,Lin1984Asymptotic,lin1999high,Lin2009Asymptotic,lin2010new,Lin2011Extrapolation,Luo2002High}, and the references therein.
Asymptotic expansions provide the theoretical foundation for extrapolation algorithms, and the supercloseness of certain interpolation operator is the key ingredient in the classical asymptotic analysis.
There exist many work on the supercloseness of conforming finite elements and mixed finite elements, see \cite{MR1987941,Brandts1994Superconvergence,MR1755693,1995High,MR1083049,MR2772093}.
However, for nonconforming elements, the numerical solutions may not admit the desired supercloseness with respect to the canonical interpolation because of the consistency error.
This leads to a substantial difficulty in analyzing the supercloseness and therefore the asymptotic expansions of eigenvalues.
A classic tool to resolve this difficulty is to design a corrected interpolation, to which the numerical solution admits the supercloseness.
However, the design of such interpolations is elaborate and most of the existing results focus on rectangular-like meshes, see \cite{Chen2013Superconvergence,MR2470146,MR1485999} for more details.
For the widely used Crouzeix-Raviart (CR for short hereinafter) and enriched Crouzeix–Raviart (ECR for short hereinafter) element on triangular meshes, it is until recently that the first asymptotic expansion of eigenvalues for the Laplace operator is analyzed in \cite{MR4350533} by employing the equivalence between CR, ECR and the lowest order Raviart–Thomas (RT for short hereinafte) elements. %, and  the supercloseness of RT element.
The application of the supercloseness of the mixed RT element, instead of proving the supercloseness of the nonconforming elements in consideration, leads to a highly technical theoretical analysis in  \cite{MR4350533}.
%By circumventing the challenge of lacking superclose properties in CR and ECR elements, their proofs have become highly technical.
%Due to they circumvent the challenge of lacking superclose property in CR and ECR element, their proofs are very technical.
Similar results for the Morley element can be found in \cite{Huang2008new,Lin2009new,MR4456710}.
However, the supercloseness of the nonconforming CR and ECR elements is still an issue.

%However, most superclose analysis focus on either conforming elements, nonconforming rectangular or hexahedral elements \cite{Chen2013Superconvergence,Chen2013Superconvergence,Zhong2005CONSTRAINED,lin1991rectangle,lin1991global,Lin2005On,MR2447254,MR2470146,MR2140564,Ming2006Superconvergence,MR1485999}.

%For the Crouzeix-Raviart (CR for short hereinafter) and enriched Crouzeix–Raviart (ECR for short hereinafter) element, due to the the lack of superclose property in the  canonical interpolation, it makes the asymptotic expansions very difficult.
%It is until recently that the first asymptotic analysis of CR and ECR element is conducted in \cite{MR4350533} by employing the equivalence between CR, ECR and the lowest order Raviart–Thomas (RT for short hereinafte) elements, and use of the supercloseness of RT element.
%Although they circumvent the challenge of lacking superclose property in CR and ECR element, their proofs are very technical.
%For the Morley element, there are some similar results, see \cite{Huang2008new,Lin2009new,MR4456710} for more details.

For the asymptotic analysis of the Stokes eigenvalues, the mixed finite element methods by the Bernadi–Raugel element and the $Q_2-P_1$ element, the stream function-vorticity-pressure method by bilinear finite element and nonconforming $Q_1^{\rm rot}$ and $EQ_1^{\rm rot}$ is proved on rectangular meshes \cite{MR2400623,MR2476018,MR2197324}. 
In the cases of nonconforming triangular CR and ECR element, a direct application of the equivalence between the CR, ECR and RT element in \cite{MR3066804,MR3328190},  as the case in \cite{MR4350533} for second-order elliptic problems, 
requires not only the asymptotic analysis of velocity, but also that of pressure. %if directly use the equivalent relationship, 
%Thus, it is necessary to analyze the superclose property of pressure.
However, the canonical interpolation of pressure, just as that of velocity, does not admit the critical  supercloseness, which brings in additional challenges to the asymptotic analysis.
%However, the canonical interpolation of  pressure does not have a superclose property, it brings additional difficulties to the asymptotic analysis.
%In addition to the lack of superclose property in the CR and ECR elements used for approximating displacement, the piecewise constant elements used for approximating pressure also lack superclose property. This makes the asymptotic analysis of the Stokes eigenvalues problem extremely difficult.

In this paper, the key supercloseness of the nonconforming CR and ECR element for Stokes problem is established. 
To overcome the difficulty caused by the consistency error, two pseudostress interpolations are designed by employing the equivalence between the CR element, the ECR element and the RT element for the Stokes equation in \cite{MR3066804,MR3328190}.
An optimal full one-order supercloseness is proved for these pseudostress interpolations with respect to both the approximate velocity and pressure on uniform triangular meshes, which is the first supercloseness result for the nonconforming CR and ECR elements.
The design of pseudostress interpolations shows that the weak continuity of finite element space and the interaction within different variables play an important role in the supercloseness analysis. 
Moreover, a direct application of the supercloseness and a simple postprocess technique leads to a global $\mathcal{O}(h^2)$-superconvergence result of CR element and ECR element for the Stokes equation.

Therefore, an asymptotic analysis of the nonconforming CR and ECR elements for the Stokes eigenvalue problem is conducted to prove the optimal convergence of eigenvalues by the extrapolation algorithm.
Thanks to the supercloseness for both the velocity and the pressure, the asymptotic analysis in this paper only requires the expansion of the interpolation error of velocity. Compared to the analysis in \cite{MR4350533, MR4456710} for the Laplace and the biharmonic operator, the analysis of the Stokes operator in this paper is much more simple and intrinsic because of the supercloseness, and can also be extended to simplify the analysis significantly for the two cases in \cite{MR4350533, MR4456710}. In the new analysis, there is no need to conduct an optimal $\mathcal{O}(h^4)$  analysis of many consistency error terms, or establish asymptotic expansion of the discrete pressure, while both of which are required if the technique in \cite{MR4350533, MR4456710} is applied.

The remaining paper is organized as follows.
 Some notations are introduced in Section \ref{notation}. % used in the paper.
The supercloseness and superconvergence of  the CR element and ECR element are analyzed for the Stokes equation in Section \ref{SuperStokesProblem}. % analyzes t.%designs the tailored interpolation, to which the numerical solutions of the Stokes equation by the CR element and ECR element admit the superclose property, and explores a superconvergence result by use of a postprocess technique.
The optimal asymptotic expansions of eigenvalues by the CR element and ECR element and therefore the optimal convergence by the extrapolation algorithm are investigated in Section \ref{asymptotic}. % investigates the optimal asymptotic expansions of the approximate eigenvalues by the CR element and ECR element. It also analyzes the optimal convergence rate of the eigenvalues using the extrapolation method.
Some numerical tests are conducted to validate the theoretical results in Section \ref{Numerical}.% presents the results of numerical tests conducted to validate the theoretical results.

\subsection{Notations and preliminaries}\label{notation}
%\subsection{Notations.}

Suppose that $\Omega\subset \Rd$ is a convex polygonal domain and the partition $\Th$ of domain $\Omega$ is assumed to be uniform in the sense that any two adjacent triangles form a parallelogram. For any element  $K\in\Th$, let $|K|$  and $h_K$ be the area and the diameter  of $K$, respectively, and  $h=\max_{K\in\Th}h_K$. Denote the set of all interior edges and boundary edges of $\Th$ by $\Fh^i$ and $\Fh^b$, respectively, and $\Fh=\Fh^i\cup\Fh^b$. Let $|e|$ be  the length of any edge $e$, $\bmn_e$ be the unit normal vector of the edge $e=K_1\cap K_2$ pointing from the element $K_1$ with smaller global index to the one with lager index.

For any element $K$ with vertices $\bmp_i=\left(p_{i1},p_{i2}\right)^T, 1\leq i \leq 3$ oriented counterclockwise, denote the corresponding barycentric coordinates, the perpendicular heights, the unit outward normal vectors and the centroid of the element by $\{\psi_i\}_{i=1}^{3}$,  $\{h_i\}_{i=1}^{3}$, $\{\bmn_i\}_{i=1}^{3}$ and  $\MK=\left(M_1,M_2\right)^T$, respectively.
Let $\{e_i\}_{i=1}^{3}$ and $\{\bmm_i\}_{i=1}^{3}$ be the edges of element $K$, and the midpoints of edges $\{e_i\}_{i=1}^{3}$, respectively. There hold
$
h_i|e_i|=2|K|$ and $\nabla\psi_i=-\frac{\bmn_i}{h_i}
$
in \cite{MR2398767}. 
%For each element $K$, define a constant
%$
%H_K^2=\sum_{1\leq i<j\leq 3}|\bmp_i-\bmp_j|^2,
%$
%where the subscript $K$ will be omitted since this constant stays the same for all elements of uniform triangulations.
Let the finite-dimensional space $\X$ be either $\R$, $\Rd$ or $\Rdd$. For any $a,b\in\Rd$, define $a\cdot b=a^Tb\in\R$ and $a\otimes b=ab^T\in\Rdd$. For any $\tau\in\Rdd$, let $\tau_i$ be the $i$-th row of the matrix $\tau$.
For any $\sigma,\tau \in\Rdd$, define the Euclid product  by $\sigma:\tau=\sum_{i,j=1}^2\sigma_{ij}\tau_{ij}$. For any matrixes $a, b, c, d\in \Rdd$, define a tensor $a\circ b\in \R^{2\times 2\times 2 \times 2}$ with
$
(a\circ b)_{ijkl}=a_{ij}b_{kl}.
$
Let $ \bid$ be the unit matrix in $\Rdd$, and denote the trace and the deviatoric part of any $\sigma\in\Rdd$ by
$\Tr\ \sigma:=\sigma:\bid$ and $\Dev \sigma:=\sigma-\frac{1}{2}(\Tr \sigma) \bid,
$
respectively. 

Given a nonnegative integer $k$, let $L^2(\T,\X)$, $W^{k, \infty}(\T,\X), H^k(\T, \X)$ be the usual Sobolev space consisting of functions taking values in $\X$ on a region $\T\subset\Rd$, with the norms and semi-norm denoted by $\|\cdot\|_{k,\infty, \T}$, $\|\cdot\|_{k, \T}$ and $|\cdot|_{k, \T}$, respectively.
The standard $L^2$ inner products on the domain $\Omega$ and any region $\T$ are denoted by $(\cdot, \cdot)$ and $(\cdot, \cdot)_{\T}$, respectively.
Define the mean-zero subspace of $L^2(\Omega,\R)$ by
$L^2_0(\Omega,\R)=\{v\in L^2(\Omega,\R): (v,1)=0\}.$
Let $\nabla_h$ and $\Divh$ denote the piecewise gradient and the piecewise divergence with respect to the triangulation $\Th$, respectively.
%For any %scaler function $f(\bmx)$, 
%vector-valued function $w(\bmx)\in\Rd$ and matrix-valued function $\tau(\bmx)\in\Rdd$, denote
%$$
%\nabla w:=
%\left(
%\begin{matrix}
%\frac{\partial w_1}{\partial x_1}&\frac{\partial w_1}{\partial x_2}\\
%\frac{\partial w_2}{\partial x_1}&\frac{\partial w_2}{\partial x_2}\\
%\end{matrix}
%\right),
%\quad
%\Div\tau:=
%\left(
%\begin{matrix}
%\frac{\partial\tau_{11}}{\partial x_1}+\frac{\partial\tau_{12}}{\partial x_2}\\
%\frac{\partial\tau_{21}}{\partial x_1}+\frac{\partial\tau_{22}}{\partial x_2}\\
%\end{matrix}
%\right).
%$$
%Denote the piecewise gradient operator and the piecewise divergence operator by $\nabla_h$ and $\Divh$, respectively.
For any point $\bmx=(x_1,x_2)^T$ and any index $\balpha=\left(\alpha_1,\alpha_2\right)^T\in\Nd,$ denote
$
\bmx^{\balpha}=x_1^{\alpha_1}x_2^{\alpha_2}$, $ \vert{\balpha}\vert= \alpha_1+\alpha_2$, $ \balpha!=\alpha_1!\alpha_2!$, $  D^\balpha v=\partial_{x_1}^{\alpha_1}\partial_{x_2}^{\alpha_2}v,
$
where $\mathbb{N}$ is the set of nonnegative integers.
Let $\Pk(\T)$ be the set of polynomials on $\T$ with degrees not larger than $k$, and $\Pk(\T,\X)$ be the set of polynomials on $\X$ with each entry belongs to $\Pk(\T)$.
%Define
%$$
%\Pk(\T)=\mbox{span}\{(\bmx-\MK)^\balpha: |\balpha|\le k, \bmx\in \Rd\},
%\quad
%\tPk(\T)=\mbox{span}\{(\bmx-\MK)^\balpha: |\balpha|=k, \bmx\in \Rd\}.
%$$
%Let $\Pk(\T)$ be polynomials of degree $k$ on $\T$ and $\tPk(\T)$ denote the space of homogeneous polynomials of degree $k$ on $\T$, that is$\tPk(\T):=\mbox{span}\{(\bmx-\MT)^\alpha: \alpha\in\Nd, |\alpha|=k\},$where $\MT=\left(M_1,\cdots,M_d\right)^T$ is centroid of $\T$. Further more,Let $\Pk(\T,\X)$ and $\tPk(\T,\X)$ be the space consiting polynomials of degree $k$ and homogeneous polynomials of degree $k$ taking values in $\X$.
%Let $\Pk(\T,\X)$ and $\tPk(\T,\X)$ be the spaces in $\X$ with each entry belongs to $\Pk(\T)$ and $\tPk(\T)$, respectively.
For ease of presentation, the symbol $A\lesssim B$ will be used to denote that $A\leq CB$ where $C$ is a positive constant independent of mesh size $h$.

%%%%%%%%%%%%%%%%%%%%%%%%%%%%
\section{Supercloseness and superconvergence for the nonconforming elements}\label{SuperStokesProblem}

For both the CR element and the ECR element, two pseudostress interpolations are designed and the discrete velocity and pressure are proved to be superclose to these two interpolations with $\mathcal{O}(h^2)$, respectively. 
Furthermore, the superconvergence of both nonconforming elements are proved by use of this supercloseness property and a  simple postprocessing algorithm.

\subsection{Nonconforming and mixed element methods for the Stokes equation}
Consider the Stokes problem with source term $f\in L^2(\Omega,\Rd)$
\begin{equation}\label{source}
\left\{
\begin{aligned}
	- \Delta \bmu^f - \nabla p^f &= f & \mbox{ in }&\Omega,\\
	\Div \bmu^f &= 0                  & \mbox{ in }&\Omega,\\
	\bmu^f &= 0     & \mbox{ on }&\partial\Omega.\\
\end{aligned}
\right.
\end{equation}
%\subsubsection{Nonconforming element methods for the Stokes equation}
The weak form of the velocity-pressure formulation of \eqref{source} seeks $(\bmu^f,p^f)\in \bmV\times Q$  such that
$$
\left\{
\begin{aligned}
(\nabla u^f, \nabla v)+(\nabla\cdot v, p^f)&=(f, v), \quad &\mbox{ for any } v \in \bmV, \\
(\nabla\cdot u^f, q)&=0, \quad &\mbox{ for any } q \in Q,
\end{aligned}
\right.
$$  
where $\bmV:=H^1_0(\Omega,\Rd)$ and $Q:=L^2_0(\Omega,\R)$. Given the triangulation $\Th$, let $\bmVh$ be a nonconforming finite element approximation of $\bmV$, and~$\Qh$ be the piecewise constant subspace of $Q$.
Consider two nonconforming elements: the CR element \cite{MR0343661} and the ECR element \cite{hu2014lower,lin2010stokes,MR3163260} as follows. %which are defined as follows.%, where the extrapolation method of the resulting discrete eigenvalues by these two elements will be considered in this paper. The discrete spaces of the CR element and the ECR element are defined as below:
%\begin{itemize}
%	\item

	 The CR element space over $\Th$  is defined in \cite{MR0343661} by
		$$
		\begin{aligned}
			\CR(\Th):= \{v \in L^2(\Omega, \R):&~v|_K \in \bbP_1(K, \R)\mbox{ for any }K\in\Th,\\
			     &\int_\F [v] \ds=0, \mbox{ for any } e \in \Fh^i, \int_\F  v \ds=0, \mbox{ for any } e \in \Fh^b\}.
		\end{aligned}
		$$
		Denote the approximation of the velocity space $V$ by the CR element by $\VCR:=\CR(\Th,\Rd)$, and the corresponding canonical interpolation operator $\PiCR:\bmV \to \VCR$  by
		$\int_\F  \PiCR\bmv \ds = \int_\F v\ds$ for any $\F\in\Fh.$
		%\begin{equation*}
			%\int_\F  \PiCR\bmv \ds ~=~ \int_\F v\ds, \mbox{ for any } \F\in\Fh.
		%\end{equation*}
		%Denote the approximate solution of \eqref{discretesource} in $\VCR\times \Qh$ by $(\CRu^f, \CRp^f)$.
	         The CR element method of \eqref{source} finds $(\CRu^f, \CRp^f)\in\VCR\times \Qh$ such that
		\begin{equation}\label{CRwithPih0f}
		\left\{
		\begin{aligned}
		(\nabla_h \CRu^f, \nabla_h v_h)+(\Divh\bmv_h, \CRp^f)&=(f, v_h), \quad &\mbox{ for any } v_h \in \VCR, \\
		(\Divh\CRu^f, q_h)&=0, \quad &\mbox{ for any } q_h \in \Qh.
		\end{aligned}
		\right.
		\end{equation}	
%	\item 
	The ECR element space over $\Th$ is defined in \cite{MR3163260} by
		$$
		\begin{aligned}
			\ECR(\Th):= \{v \in &L^2(\Omega, \R):~v|_K \in \ECR(K, \R)\mbox{ for any }K\in\Th,\\
			     &\int_\F [v] \ds=0, \mbox{ for any } e \in \Fh^i, \int_\F  v \ds=0, \mbox{ for any } e \in \Fh^b\},
		\end{aligned}
		$$
		where $\ECR(K, \R):=\bbP_1(K,\R)+\mbox{span}\{x_1^2+x_2^2\}$.
		Denote the approximation of the velocity space $V$ by the ECR element by $\VECR:=\ECR(\Th,\Rd)$ and the corresponding canonical interpolation operator $\PiECR:\bmV\to\VECR$  satisfies
		$%\begin{equation*}
			\int_\F \PiECR\bmv \ds=\int_\F v \ds$ and $\int_K\PiECR\bmv \dx=\int_Kv \dx$ for any $\F\in\Fh,~ K\in\Th.
		$%\end{equation*}
		%Denote the approximate solution of \eqref{discretesource} in $\VECR\times \Qh$ by $(\ECRu^f, \ECRp^f)$.
		The ECR element method of \eqref{source} finds $(\ECRu^f,\ECRp^f)\in\VECR\times \Qh$ such 		that
		\begin{equation}\label{ECRwithPih0f}
		\left\{
		\begin{aligned}
		(\nabla_h \ECRu^f, \nabla_h v_h)+(\Divh\bmv_h,\ECRp^f)&=(f, v_h), \quad &\mbox{ for any } v_h \in \VECR, \\
		(\Divh\ECRu^f, q_h)&=0, \quad &\mbox{ for any } q_h \in \Qh.
		\end{aligned}
		\right.
		\end{equation}
%\end{itemize}

%\subsubsection{Raviart-Thomas element for the Stokes equation}
Define the pseudostress $\sigma^f=\nabla \bmu^f + p^f\bid$, it follows that
\begin{equation}\label{usigRelation}
p^f=\frac12 \Tr \sigma^f,\quad \nabla\bmu^f=\Dev\sigma^f.
\end{equation}
This yields the equivalent  pseudostress-velocity formulation of~\eqref{source}, which seeks $(\sigma^f,\bmu^f)\in\Sigma\times U$ such that
\begin{equation}\label{mixCon}
\left\{
\begin{aligned}
(\Dev\sigma^f,\Dev\tau) + (\Div\tau, \bmu^f) &= 0,\qquad &\forall \tau\in\Sigma,\\
(\Div\sigma^f,\bmv)&=-(f,\bmv),\qquad &\forall \bmv\in U,
\end{aligned}
\right.
\end{equation}
with $U=L^2(\Omega,\Rd)$, %the usual $H$(div) space 
$\Hdivd=\{v\in L^2(\Omega,\Rd): \nabla\cdot v\in L^2(\Omega,\R)\}$ and
$$
\Sigma:=\left\{\bmtau\in L^2(\Omega,\Rdd): \bmtau_i\in\Hdivd,~i=1,2,\text{ and }\int_\Omega\Tr(\bmtau)\dx=0\right\}.
$$
Consider the lowest order RT element \cite{MR0483555} for the Stokes equation \eqref{source}, where the shape function space  is
$
\RT(K,\Rd):=\bbP_0(K,\Rd) + \bmx\bbP_0(K,\R),
$
and the corresponding finite element space
$
\RT(\Th):=\left\{\bmtau\in \Hdivd:\bmtau|_K\in\RT(K,\Rd),\ \forall K\in\Th\right\}.
$
Define
$$
\begin{aligned}
\RT(K,\Rdd):&=\{\bmtau\in L^2(K,\Rdd):~\bmtau_i\in\RT(K,\Rd), \text{ for }i=1,2\},
%\Hdivdd/\R:&=\left\{\bmtau\in L^2(\Omega,\Rdd):~\forall i=1,2,\cdots d, \bmtau_i\in\Hdivd\text{ and }\int_\Omega\Tr(\bmtau)dx=0\right\},
\end{aligned}
$$
and the approximation space of pseudostress  by the RT element
$
\SIGRT(\Th):=\{\bmtau\in\Sigma:~\bmtau|_K\in\RT(K,\Rdd)\text{ for all }K\in\Th\}.
$
The canonical interpolation operator %$\PiRT$ $: H^1(K,\Rdd)\to\RT(K,\Rdd)$ 
of the RT element in \cite{MR0771029,MR1075159} is defined by
$\int_\F (\PiRT\bmtau-\bmtau)\bmn_e\ds=0$ for any edge $e\in \cE_h$.
%\begin{equation*}\label{RTdef}
%\int_\F (\PiRT\bmtau-\bmtau)\bmn_e\ds=0 \text{ for any edge } e\in \cE_h,
%\end{equation*}
For any $\tau\in\Sigma$,
\begin{equation}\label{RTdof}
\PiRT\tau|_K=\displaystyle\sum_{i=1}^{3}\frac{1}{2\left|{K}\right|}\int_{\F_i}\tau\bmn_i \ds\otimes\left(\bmx-\bmp_i\right).
\end{equation}
The piecewise constant space $\URT$ is used to approximate the velocity and get a stable pair of spaces.
%To get a stable pair of space, the piecewise constant space is used to approximate the velocity, namely,
%$$
%\URT:=\left\{\bmv\in L^2(\Omega,\Rd):~\bmv|_K\in\bbP_0(K,\Rd)\text{ for any }K\in\Th\right\}.
%$$
The RT element method of the Stokes equation \eqref{source}  seeks
$(\sigma_{\rm RT}^f, \bmu_{\rm RT}^f)\in\SIGRT(\Th)\times\URT$ such that
\begin{equation}\label{RTPi0u}
\left\{
\begin{aligned}
\left(\Dev\sigma_{\rm RT}^f, \Dev\tau_h\right)+  (\Div\tau_h,\bmu_{\rm RT}^f)&=0, \quad &\mbox{ for any } \tau_h \in \SIGRT(\Th), \\
(\Div \sigma_{\rm RT}^f, v_h)&=-(f, v_h), \quad &\mbox{ for any } v_h \in \URT,
\end{aligned}
\right.
\end{equation}
%Note that $\Pih\Div(I-\PiRT) \tau=0$ for any $ \tau\in\Sigma$ by the definition of $\PiRT$
By the definition of $\PiRT$, it holds that $\int_{K} \Div(I-\PiRT) \tau \dx =0$ for any $ K\in\Th$.
% for any $ \tau\in\Sigma$. 
Then, by \eqref{mixCon} and \eqref{RTPi0u},
\begin{equation}\label{propertxih}
\Div (\PiRT\sigma^f-\RTsig^f)=0,\quad \mbox{and}\quad (\PiRT\sigma^f-\RTsig^f)|_K\in \bbP_0(K,\Rdd).
\end{equation}
As analyzed in  \cite[Theorem 3.1]{MR4434148}, the approximate pseudostress by the mixed formulation \eqref{RTPi0u} admits the following superconvergence  property on uniform triangulations.
\begin{lemma}\label{supeRTlm}
Let $(\sigma^f, \bmu^f)$ and $(\sigma_{\rm RT}^f, \bmu_{\rm RT}^f)$ be the solutions of Problems \eqref{mixCon} and \eqref{RTPi0u}, respectively, with  $(\sigma^f,u^f)\in W^{2,\infty}(\Omega,\Rdd)\times H^2(\Omega,\Rd)$.
%$(\lambda,u,p)$ is the solution of Stokes eigenvalue problem \eqref{sto1}, and $\sigma=\nabla u+p\bid$ with $(\sigma,u)\in W^{2,\infty}(\Omega,\Rdd)\times H^2(\Omega,\Rd)$.
%$(\RTsigSt, \RTuSt)$ is the solutin of problem \eqref{RTPi0u} and the eigenfunction pair $(\sigma,u)\in W^{2,\infty}(\Omega,\Rdd)\times H^2(\Omega,\Rd)$. 
It holds on uniform triangulations with sufficiently small~$h$
\begin{equation}\label{supeRT}
\left\|\PiRT\sigma^f-\sigma_{\rm RT}^f\right\|_{0,\Omega} \lesssim h^2|\ln h|^{\frac{1}{2}}(\|{\sigma^f}\|_{W_{\infty}^2(\Omega)}+\|{u^f}\|_{2,\Omega}).
\end{equation}
\end{lemma}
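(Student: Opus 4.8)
\noindent\emph{Proof idea.}\quad The plan is to reduce the estimate to a superconvergence property of the Raviart--Thomas interpolation error tested against a divergence-free piecewise constant tensor, and to recover the full $L^2$-norm from its deviatoric part via the kernel coercivity of the pseudostress formulation. Set $e_h:=\PiRT\sigma^f-\sigma_{\rm RT}^f$. By \eqref{propertxih}, $e_h$ is piecewise constant and satisfies $\Div e_h=0$; since both $\PiRT\sigma^f$ and $\sigma_{\rm RT}^f$ lie in $\SIGRT(\Th)\subset\Sigma$, it follows that $e_h\in\SIGRT(\Th)$ belongs to the divergence-free kernel $Z=\{\tau\in\Sigma:\Div\tau=0\}$. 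On $Z$ the form $(\Dev\,\cdot\,,\Dev\,\cdot\,)$ is coercive with respect to the $L^2$-norm, $\|\tau\|_{0,\Omega}\lesssim\|\Dev\tau\|_{0,\Omega}$, which is exactly the kernel-ellipticity condition underlying the well-posedness of \eqref{mixCon} (the only $\Dev$-free, divergence-free fields are the multiples of $\bid$, removed by the constraint $\int_\Omega\Tr\tau\dx=0$). Hence it suffices to bound $\|\Dev e_h\|_{0,\Omega}$.

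First I would derive the error identity. Testing the first equations of \eqref{mixCon} and \eqref{RTPi0u} with $\tau_h=e_h\in\SIGRT(\Th)$ and subtracting gives $(\Dev(\sigma^f-\sigma_{\rm RT}^f),\Dev e_h)+(\Div e_h,\bmu^f-\bmu_{\rm RT}^f)=0$. The velocity coupling vanishes because $\Div e_h=0$, and splitting $\sigma^f-\sigma_{\rm RT}^f=(\sigma^f-\PiRT\sigma^f)+e_h$ yields
\begin{equation*}
\|\Dev e_h\|_{0,\Omega}^2=-(\Dev(\sigma^f-\PiRT\sigma^f),\Dev e_h)=-(\sigma^f-\PiRT\sigma^f,\Dev e_h),
\end{equation*}
where the last equality uses that $\Dev e_h$ is trace-free. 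Thus everything reduces to estimating the interpolation error $\sigma^f-\PiRT\sigma^f$ tested against the divergence-free piecewise constant tensor $\Dev e_h$.

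The heart of the argument is the superconvergence estimate
\begin{equation*}
\bigl|(\sigma^f-\PiRT\sigma^f,\Dev e_h)\bigr|\lesssim h^2|\ln h|^{\frac12}\bigl(\|\sigma^f\|_{W_\infty^2(\Omega)}+\|\bmu^f\|_{2,\Omega}\bigr)\|e_h\|_{0,\Omega}
\end{equation*}
on uniform triangulations. I would prove it elementwise: since $\Dev e_h$ is piecewise constant, only $\int_K(\sigma^f-\PiRT\sigma^f)\dx$ enters on each $K$, and a Taylor/Bramble--Hilbert expansion based on the degrees of freedom \eqref{RTdof} isolates the leading second-derivative contribution, which is merely $\mathcal{O}(h)$ per element and too large term by term. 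The gain comes from the uniform mesh: adjacent triangles form parallelograms, and because $e_h$ has continuous normal components (being in $H({\rm div})$) the leading contributions of paired elements cancel, lowering the order to $h^2$. The residual is controlled by a two-dimensional discrete Sobolev inequality applied to $e_h$, which is the origin of the $|\ln h|^{\frac12}$ factor and pins the bound to $\|e_h\|_{0,\Omega}$; the $\|\bmu^f\|_{2,\Omega}$ term enters through the lower-order remainder of the same expansion.

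Combining the three ingredients, $\|e_h\|_{0,\Omega}^2\lesssim\|\Dev e_h\|_{0,\Omega}^2=|(\sigma^f-\PiRT\sigma^f,\Dev e_h)|\lesssim h^2|\ln h|^{\frac12}(\|\sigma^f\|_{W_\infty^2(\Omega)}+\|\bmu^f\|_{2,\Omega})\|e_h\|_{0,\Omega}$, and dividing by $\|e_h\|_{0,\Omega}$ gives \eqref{supeRT}. The main obstacle is the superconvergence estimate of the third paragraph: both securing the $h^2$ order through the parallelogram cancellation and extracting the sharp $|\ln h|^{\frac12}$ factor from the discrete Sobolev inequality use the uniform-mesh hypothesis in an essential way, and it is here that the bulk of the technical work lies.
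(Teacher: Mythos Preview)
The paper does not prove this lemma; it is quoted verbatim from \cite[Theorem~3.1]{MR4434148}. So there is no in-paper proof to compare against, and your outline has to be judged on its own merits. The overall architecture you propose---reduce to $\|\Dev e_h\|_{0,\Omega}$ via kernel coercivity on divergence-free $\Sigma$, derive the identity $\|\Dev e_h\|_{0,\Omega}^2=-(\sigma^f-\PiRT\sigma^f,\Dev e_h)$, and then invoke a uniform-mesh superconvergence estimate for the RT interpolation error---is exactly the standard route and is correct up to the third step.

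Two points deserve attention. First, your parallelogram-cancellation sketch appeals to the normal continuity of $e_h$, but the test function in your identity is $\Dev e_h$, which does \emph{not} inherit $H(\operatorname{div})$ continuity. The clean way around this is to use the symmetry $(A,\Dev B)=(\Dev A,B)$ to rewrite the pairing as $(\Dev(\sigma^f-\PiRT\sigma^f),e_h)=(\sigma^f-\PiRT\sigma^f,e_h)-\tfrac12(\Tr(\sigma^f-\PiRT\sigma^f),\Tr e_h)$. The first term is a row-by-row RT interpolation error tested against a divergence-free piecewise-constant $H(\operatorname{div})$ field, so the classical Brandts/Dur\'an argument (curl potential, integration by parts, uniform-mesh cancellation, discrete $W^{1,\infty}$ bound producing $|\ln h|^{1/2}$) applies directly. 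The trace term is new relative to the Poisson case and is precisely where the pseudostress formulation requires extra work; controlling it is the genuine content of the cited reference and is not covered by your sketch. Second, your explanation for the $\|u^f\|_{2,\Omega}$ contribution (``lower-order remainder of the same expansion'') is not right: in your argument the velocity decouples entirely once $\Div e_h=0$, and a correct execution would yield a bound depending only on $\|\sigma^f\|_{W^2_\infty(\Omega)}$. This is harmless for the stated inequality (which is weaker), but you should not manufacture a dependence that is not there.
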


\subsection{Supercloseness for the nonconforming elements} \label{superclose}
To begin with, define the $L^2$ projection operators $\PiK: L^2(K,X)\to\bbP_0(K,X)$ and $\Pih:L^2(\Omega,X)\to\bbP_0(\Th,X)$ by
$
\PiK w = \frac1K\int_Kw\dx$ and $\Pih w|_K=\PiK w,
$
respectively. It holds that 
$
\|u-\PiK u\|_{0,K}\lesssim h|u|_{1,K}
$
for any $u\in H^1(K,X)$.
For the pseudostress $\sigma \in \Sigma$, define the following pseudostress interpolations
\begin{equation}\label{def:inter}
\begin{aligned}
\Pip\sigma:&=\frac12\Pih\Tr(\PiRT\sigma), 
\\
\PiuCR\sigma :&=\Pih \Dev \PiRT\sigma= \Pih\PiRT\sigma-(\Pip\sigma)\bid,
\\
\PiuECR\sigma :&=\Pih \Dev \PiRT\sigma + (I-\Pih)\PiRT\sigma= \PiRT\sigma-(\Pip\sigma)\bid.
\end{aligned}
\end{equation} 
%In this section, a tailored operator, composed by the canonical interpolation of the RT element, the trace operator and the piecewise constant projection operator, is designed for the pressure, and the approximate pressure by both the CR element and the ECR element is analyzed to admit a optimal superclose property with respect to this tailored interpolation.

%Both the canonical $L^2$-projection of pressure and the canonical interpolation of the velocity do not admit a first-order superclose convergence with respect to the solutions  by the CR element and the ECR element. It holds numerically that
%$$
%\|\CRp^f - \Pi_h^0 p^f\|_{0,\Omega}\ge Ch^{3/2}\quad \mbox{and}\quad \|\ECRp^f - \Pi_h^0 p^f\|_{0,\Omega}\ge Ch^{3/2}.
%$$

%which do not admit the desired superclose property and lead to the main difficulty in establishing an asymptotic expansion of eigenvalues and therefore analyzing the optimal convergence rate of eigenvalues by the extrapolation method. 
%which implies that the approximate pressure does not admit the desired superclose property with respect to the standard $L^2$ projection of pressure. This leads to the main difficulty in establishing an asymptotic expansion of eigenvalues and therefore analyzing the optimal convergence rate of eigenvalues by the extrapolation method. 
There exists the following equivalence between the RT solution and the CR and the ECR solutions of the Stokes equation in \cite{MR3066804} and \cite{MR3328190}, respectively.
\begin{lemma}\label{RERTandCRECR}
Let $(\CRu^f, \CRp^f)$, $(\ECRu^f, \ECRp^f)$, and $(\sigma_{\rm RT}^f, \bmu_{\rm RT}^f)$ be the solutions of problems \eqref{CRwithPih0f}, \eqref{ECRwithPih0f} and ~\eqref{RTPi0u} with piecewise constant  $f$, respectively.
%Let $(\RTsigSt, \RTuSt)$, $(\CRuS,\CRpS)$ and $(\ECRuS,\ECRpS)$ be the solutions of problem \eqref{RTPi0u}, \eqref{CRwithPih0f} and \eqref{ECRwithPih0f} with source term $f=\lambda \Pi_h^0\bmu$, respectively. 
It holds that
\begin{align}
\label{equivenceofRTandCR}
\sigma_{\rm RT}^f|_K&=\nabla_h \CRu^f|_K+\CRp^f \bid-\frac{f}{2} \otimes \left(\bmx-\MK\right),\quad \bmx\in K,\\
\label{equivenceofRTandECR}
\sigma_{\rm RT}^f|_K&=\nabla_h \ECRu^f|_K+\ECRp^f \bid.
\end{align}
\end{lemma}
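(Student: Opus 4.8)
\emph{Strategy.} The plan is to verify both identities by constructing, from the nonconforming pairs, an explicit matrix field and showing it coincides with the RT pseudostress $\sigma_{\rm RT}^f$ through the well-posedness of \eqref{RTPi0u}. Writing $(u_h,p_h)$ for either $(\CRu^f,\CRp^f)$ or $(\ECRu^f,\ECRp^f)$, set $\tilde\sigma|_K:=\nabla_h\CRu^f|_K+\CRp^f\bid-\frac{f}{2}\otimes(\bmx-\MK)$ in the CR case and $\tilde\sigma|_K:=\nabla_h\ECRu^f|_K+\ECRp^f\bid$ in the ECR case. I would then check, in order: (i) $\tilde\sigma|_K\in\RT(K,\Rdd)$; (ii) $\tilde\sigma\in\SIGRT(\Th)$, i.e. the interior normal traces match and $\int_\Omega\Tr\tilde\sigma\dx=0$; and (iii) that $\tilde\sigma$, paired with a companion piecewise-constant velocity, solves \eqref{RTPi0u}. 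Since \eqref{RTPi0u} is uniquely solvable, this identifies $\tilde\sigma=\sigma_{\rm RT}^f$ and proves \eqref{equivenceofRTandCR}--\eqref{equivenceofRTandECR}.

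\emph{Local membership and the easy constraints.} For CR, $\nabla_h\CRu^f$ and $\CRp^f\bid$ are elementwise constant, while each row of $-\frac{f}{2}\otimes(\bmx-\MK)$ has the form $\boldsymbol{a}_i+b_i\bmx$; hence $\tilde\sigma|_K\in\RT(K,\Rdd)$. For ECR no correction is needed because $\nabla(x_1^2+x_2^2)=2\bmx$, so $\nabla_h\ECRu^f$ is already of RT form. The second equation of \eqref{RTPi0u} reduces to $\Divh\tilde\sigma=-f$: for CR this is immediate since $\Div\big(-\frac{f}{2}\otimes(\bmx-\MK)\big)=-f$ and the remaining two terms are divergence-free on each $K$; for ECR one tests \eqref{ECRwithPih0f} with the element bubble (zero edge averages) to obtain $\Delta_h\ECRu^f=-f$. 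Finally $\int_\Omega\Tr\tilde\sigma\dx=0$ follows from the discrete incompressibility, the mean-zero normalization of the pressure, and $\int_K(\bmx-\MK)\dx=0$.

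\emph{The main obstacle: $H(\mathrm{div})$-conformity.} The crux is the interior normal continuity. Since an RT normal trace is constant along each edge, it suffices to show $\int_e[\tilde\sigma]\bmn_e\ds=0$ for every interior edge $e=K_1\cap K_2$. I would obtain this by testing the momentum equation with $\psi_e\boldsymbol{c}$, where $\psi_e$ is the vector edge basis function and $\boldsymbol{c}\in\Rd$ is arbitrary, and integrating by parts elementwise. The Laplacian term drops (it is either $0$ for $\bbP_1$, or a constant paired with a zero-mean function), leaving $|e|\,\boldsymbol{c}\cdot[\nabla_h u_h+p_h\bid]\bmn_e$ balanced against $(f,\psi_e\boldsymbol{c})$. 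For ECR the edge basis satisfies $\int_K\psi_e\dx=0$, so the source term vanishes and the jump is zero directly; for CR, $\int_K\phi_e\dx=|K|/3\neq0$ produces a source contribution $\frac{1}{3|e|}\big(f|_{K_1}|K_1|+f|_{K_2}|K_2|\big)$, which is cancelled exactly by the jump of $-\frac{f}{2}\otimes(\bmx-\MK)$, using $(\bmx-\MK)\cdot\bmn_e=\frac{2|K|}{3|e|}$ on $e$. This cancellation is the delicate point; it dictates the precise form of the correction term and explains why ECR needs none.

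\emph{The first RT equation.} It remains to produce the companion velocity. Testing against divergence-free $\tau\in\SIGRT(\Th)$ reduces the first equation of \eqref{RTPi0u} to $(\Dev\tilde\sigma,\Dev\tau)=0$. Using $(\Dev\tilde\sigma,\Dev\tau)=(\tilde\sigma,\Dev\tau)$, the pressure and trace parts drop, and an integration by parts turns $(\nabla_h u_h,\tau)$ into the boundary sum $\sum_e\int_e[u_h]\cdot(\tau\bmn_e)\ds$; these terms vanish because $\tau\bmn_e$ is constant on $e$ while $\int_e[u_h]\ds=0$, and the correction term contributes nothing against a trace-free divergence-free field. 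Hence the residual for a general $\tau$ depends only on $\Div\tau$, so the surjectivity of $\Divh:\SIGRT(\Th)\to\URT$ (the RT inf--sup condition) yields a unique $\tilde u\in\URT$ closing the first equation; uniqueness of \eqref{RTPi0u} then finishes the proof.
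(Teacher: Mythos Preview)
The paper does not prove this lemma; it simply quotes the two identities from \cite{MR3066804} and \cite{MR3328190}, the Stokes analogues of Marini's classical identity between the lowest-order RT and the $P_1$-nonconforming solutions. Your proposal supplies a complete direct verification, and the argument is correct and is in spirit the one found in those references: build $\tilde\sigma$ from the nonconforming pair, establish $H(\mathrm{div})$-conformity by the edge-basis test, compute the divergence (directly for CR, via the element bubble for ECR), and close the first RT equation by the inf--sup factorisation through $\ker\Div$.

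Two places in your write-up deserve one more line. In the first-equation step you jump from $(\tilde\sigma,\Dev\tau)$ to $(\nabla_h u_h,\tau)$; the hidden identity is $(\nabla_h u_h,\Dev\tau)=(\nabla_h u_h,\tau)-\tfrac12(\Divh u_h,\Tr\tau)$, and it is the discrete incompressibility $\Divh\CRu^f=0$ (respectively $\Pih\Divh\ECRu^f=0$, which is enough because a divergence-free $\tau\in\SIGRT(\Th)$ is piecewise constant) that kills the second term. Likewise, the correction term pairs to zero against $\Dev\tau$ not because $\Dev\tau$ is trace-free but because divergence-free RT fields are piecewise constant, so $\Dev\tau$ is constant on each $K$ and integrates to zero against $\bmx-\MK$. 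With those clarifications your proof is sound.
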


By use of these equivalence, a full first-order supercloseness with respect to the pseudostress interpolations for both velocity and pressure is proved as follows.

\begin{theorem}\label{th:superclose}
Let $(\sigma^f,u^f)$, $(\sigma_{\rm RT}^f, \bmu_{\rm RT}^f)$, $(\CRu^f, \CRp^f)$ and $(\ECRu^f, \ECRp^f)$ be the solution of problems \eqref{mixCon}, \eqref{RTPi0u}, \eqref{CRwithPih0f} and \eqref{ECRwithPih0f} with source term $f$, respectively. Assume that  $(\sigma^f,u^f)\in W^{2,\infty}(\Omega,\Rdd)\times H^2(\Omega,\Rd)$ and the triangulation is uniform with sufficiently small $h$. Then,
\begin{equation}
\begin{aligned}\label{super:pu}
\|\CRp^f- \Pip\sigma^f\|_{0,\Omega} + \|\ECRp^f- \Pip\sigma^f\|_{0,\Omega}
+& \|\nabla_h\CRu^f-\PiuCR\sigma^f\|_{0,\Omega} 
\\
+ \|\nabla_h\ECRu^f-\PiuECR\sigma^f\|_{0,\Omega}\lesssim &h^2|\ln h|^{\frac{1}{2}}(\|{\sigma^f}\|_{2,\infty, \Omega}+\|{u^f}\|_{2,\Omega}).
\end{aligned}
\end{equation}
\end{theorem}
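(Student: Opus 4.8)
The plan is to route the estimate through the lowest-order RT element, chaining the equivalence of Lemma \ref{RERTandCRECR} to the RT superconvergence of Lemma \ref{supeRTlm}. Because that equivalence holds only for a piecewise constant load, the first move is to replace $f$ by $\Pih f$. Denoting by $(\CRu^{\Pih f},\CRp^{\Pih f})$ and $(\ECRu^{\Pih f},\ECRp^{\Pih f})$ the discrete solutions driven by $\Pih f$, I would split each of the four quantities, for instance
\[
\CRp^f-\Pip\sigma^f=\big(\CRp^f-\CRp^{\Pih f}\big)+\big(\CRp^{\Pih f}-\Pip\sigma^f\big),
\]
and bound the two brackets by separate arguments.

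The first bracket measures the perturbation from using $\Pih f$ in place of $f$, and controlling it is the main obstacle, since $f\neq\Pih f$ in general. The pair $(\CRu^f-\CRu^{\Pih f},\CRp^f-\CRp^{\Pih f})$ solves \eqref{CRwithPih0f} with load $f-\Pih f$, so by the uniform discrete inf--sup stability of the CR--$\bbP_0$ pair,
\[
\|\nabla_h(\CRu^f-\CRu^{\Pih f})\|_{0,\Omega}+\|\CRp^f-\CRp^{\Pih f}\|_{0,\Omega}\lesssim \sup_{0\neq v_h\in\VCR}\frac{(f-\Pih f,v_h)}{\|\nabla_h v_h\|_{0,\Omega}}.
\]
The extra order is harvested from orthogonality: since $f-\Pih f\perp\bbP_0(\Th,\Rd)$, one may replace $v_h$ by $v_h-\Pih v_h$ and use $\|v_h-\Pih v_h\|_{0,\Omega}\lesssim h\|\nabla_h v_h\|_{0,\Omega}$, which gives $(f-\Pih f,v_h)\lesssim h\|f-\Pih f\|_{0,\Omega}\|\nabla_h v_h\|_{0,\Omega}\lesssim h^2|f|_{1,\Omega}\|\nabla_h v_h\|_{0,\Omega}$. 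As $f=-\Div\sigma^f$ by \eqref{mixCon}, one has $|f|_{1,\Omega}\lesssim\|\sigma^f\|_{2,\infty,\Omega}$, so the first bracket is $\mathcal{O}(h^2)$; the ECR case is identical, the quadratic bubble still satisfying $\|v_h-\Pih v_h\|_{0,\Omega}\lesssim h\|\nabla_h v_h\|_{0,\Omega}$.

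For the second bracket I would feed $\Pih f$ into Lemma \ref{RERTandCRECR}, noting that the RT solution for data $\Pih f$ is exactly $\RTsig^f$ because the RT velocity space is piecewise constant. Taking the trace of \eqref{equivenceofRTandCR}, applying $\Pih$, and using the discrete incompressibility $\Pih\Divh\CRu^{\Pih f}=0$ together with the centroid identity $\PiK(\bmx-\MK)=0$ (which annihilates the $f$-term), I obtain $\CRp^{\Pih f}=\tfrac12\Pih\Tr\RTsig^f$; the same computation on \eqref{equivenceofRTandECR} gives $\ECRp^{\Pih f}=\tfrac12\Pih\Tr\RTsig^f$. Comparing with $\Pip\sigma^f=\tfrac12\Pih\Tr\PiRT\sigma^f$ from \eqref{def:inter}, the second bracket of each pressure term collapses to $\tfrac12\Pih\Tr(\RTsig^f-\PiRT\sigma^f)$, bounded by Lemma \ref{supeRTlm}. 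For the gradients, $\nabla_h\CRu^{\Pih f}$ is piecewise constant so $\Pih$ acts trivially, and \eqref{equivenceofRTandCR} becomes $\nabla_h\CRu^{\Pih f}=\Pih\RTsig^f-\CRp^{\Pih f}\bid$, matching $\PiuCR\sigma^f=\Pih\PiRT\sigma^f-(\Pip\sigma^f)\bid$; likewise \eqref{equivenceofRTandECR} yields $\nabla_h\ECRu^{\Pih f}=\RTsig^f-\ECRp^{\Pih f}\bid$, matching $\PiuECR\sigma^f=\PiRT\sigma^f-(\Pip\sigma^f)\bid$. Each gradient bracket therefore reduces to a ($\Pih$-projected) copy of $\RTsig^f-\PiRT\sigma^f$ plus the pressure difference already bounded, and Lemma \ref{supeRTlm} closes the estimate at the stated $h^2|\ln h|^{\frac12}$ rate.

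In summary, the only delicate point is the reduction to piecewise constant data; everything downstream is linear bookkeeping arranged so that the interpolations in \eqref{def:inter} coincide, after the incompressibility and centroid cancellations, with the RT object $\RTsig^f$, whose distance to $\PiRT\sigma^f$ is precisely what Lemma \ref{supeRTlm} controls.
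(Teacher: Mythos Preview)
Your proof is correct and follows the same overall architecture as the paper: split each quantity into a perturbation term (from replacing $f$ by $\Pih f$) and a structural term, identify the structural term via Lemma~\ref{RERTandCRECR} with a piece of $\RTsig^f-\PiRT\sigma^f$, and close with Lemma~\ref{supeRTlm}. The identifications you record for $\CRp^{\Pih f}$, $\ECRp^{\Pih f}$, $\nabla_h\CRu^{\Pih f}$, and $\nabla_h\ECRu^{\Pih f}$ are exactly the relations~\eqref{CRp0withsigma} the paper derives.

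The one methodological difference is in how you bound the perturbation $(\CRu^f-\CRu^{\Pih f},\CRp^f-\CRp^{\Pih f})$. The paper views this pair as the CR approximation to the continuous Stokes problem with load $(I-\Pih)f$, invokes the $H^2$ regularity available on a convex domain together with the a~priori CR error estimate, and then bounds the continuous solution via $\|(I-\Pih)f\|_{-1,\Omega}\lesssim h^2\|f\|_{1,\Omega}$. Your route is more direct: you stay entirely on the discrete level, use the uniform inf--sup stability of the CR/ECR--$\bbP_0$ pair, and extract the two factors of $h$ from the double orthogonality $((I-\Pih)f,v_h)=((I-\Pih)f,(I-\Pih)v_h)$. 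This is slightly more elementary---it avoids the detour through continuous regularity---while the paper's argument makes the role of convexity explicit; both yield the same $\mathcal{O}(h^2)$ bound with the same regularity requirement $f\in H^1$, which is implied by $\sigma^f\in W^{2,\infty}$.
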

\begin{proof}
It follows from \eqref{RTPi0u} and the definition of $\URT$ that $\sigma_{\rm RT}^f=\sigma_{\rm RT}^{\Pi_h^0f}$. Applying~$\Pih$ to both sides of \eqref{equivenceofRTandCR} and \eqref{equivenceofRTandECR},  the fact that  $\CRp^f$ and $\ECRp^f$ are piecewise constant indicates that
$$
\Pi_h^0 \sigma_{\rm RT}^f= \Pi_h^0\sigma_{\rm RT}^{\Pi_h^0f}=\nabla_h \CRu^{\Pi_h^0f}+\CRp^{\Pi_h^0f} \bid=\Pi_h^0\nabla_h \ECRu^{\Pi_h^0f}+\ECRp^{\Pi_h^0f} \bid.
$$
The discrete divergence-free property in \eqref{CRwithPih0f} and \eqref{ECRwithPih0f} implies  that
$
\Divh\CRu^{\Pi_h^0f}=\Pi_h^0\Divh\ECRu^{\Pi_h^0f}=0.
$
These two equations and \eqref{equivenceofRTandECR} indicate that
 \begin{equation}\label{CRp0withsigma}
\begin{aligned}
&\CRp^{\Pi_h^0f}=\ECRp^{\Pi_h^0f}
=\frac12  \Pi_h^0  {\Tr}(\sigma_{\rm RT}^f),
\\ 
&\nabla_h\CRu^{\Pi_h^0f}=\Pih\Dev\sigma_{\rm RT}^f,
\qquad
\nabla_h\ECRu^{\Pi_h^0f}=\sigma_{\rm RT}^f-\frac12  \Pi_h^0  {\Tr}(\sigma_{\rm RT}^f)\bid.
\end{aligned}
 \end{equation} 
Let the difference $e_h^u=\CRu^f - \CRu^{\Pi_h^0f} $ and $e_h^p=\CRp^{f}-\CRp^{\Pi_h^0f}$. Then,
\begin{equation}\label{erreq}
\left\{
\begin{aligned}
(\nabla_h e_h^u, \nabla_h v_h)+(\Divh\bmv_h, e_h^p)&=((I-\Pi_h^0)f, v_h), \quad &\mbox{ for any } v_h \in \VCR, \\
(\Divh e_h^u, q_h)&=0, \quad &\mbox{ for any } q_h \in \Qh,
\end{aligned}
\right.
\end{equation}
where $(e_h^u, e_h^p)$ can be viewed as the CR approximation to the solution $(e_u,e_p)$ of problem \eqref{source} with source term $(I-\Pi_h^0)f$. Then,
$
\|\nabla_h(e_h^u-e_u)\|_{0,\Omega} + \|e_h^p - e_p\|_{0,\Omega}\lesssim h(\|e_u\|_{2,\Omega} + \|e_p\|_{1,\Omega})
$
if the domain is convex (see \cite[(5.11),(6.12)]{MR0343661}). It follows from the inverse inequality that $\|e_h^u\|_{1,\Omega}+\|e_h^p\|_{0,\Omega}\lesssim  \|e_u\|_{1,\Omega} + \|e_p\|_{0,\Omega}$. The wellposedness in \cite{galdi1994stokes}  as follows
$$
\|e_u\|_{1,\Omega} + \|e_p\|_{0,\Omega}\lesssim \|(I-\Pi_h^0)f\|_{-1,\Omega}=\sup_{v\in H_0^1(\Omega)}\frac{|((I-\Pi_h^0)f,(I-\Pi_h^0)v)|}{\|v\|_{1,\Omega}}\lesssim h^2\|f\|_{1,\Omega},
$$
indicates that $\|e_h^u\|_{1,\Omega}+\|e_h^p\|_{0,\Omega}\lesssim h^2$.
%, namely,
%$
%\|\CRp^{f}-\CRp^{\Pi_h^0f}\|_{0,\Omega}\lesssim h^2,
%$
%$
%\|\nabla_h(\CRu^{f}-\CRu^{\Pi_h^0f})\|_{0,\Omega}\lesssim h^2.
%$
A combination of this,  \eqref{supeRT} and  \eqref{CRp0withsigma} yields
$$
\begin{aligned} 
\|\CRp^f - \Pip\sigma^f\|_{0,\Omega}&=\|e_h^p %+ (\CRp^{\Pi_h^0f} -\frac12\Pi_h^0\Tr (\sigma_{\rm RT}^f)) 
+ \frac12\Pi_h^0\Tr (\sigma_{\rm RT}^f-\PiRT \sigma^f)
\|_{0,\Omega} %\lesssim \|\sigma_{\rm RT}^f-\PiRT \sigma^f\|_{0,\Omega}
\lesssim h^2|\ln h|^{\frac{1}{2}},\\
\|\nabla_h\CRu^f - \Piu\sigma^f\|_{0,\Omega}&=\|
\nabla_h e_h^u %+ (\nabla_h \CRu^{\Pi_h^0 f} - \Pi_h^0\Dev(\sigma_{\rm RT}^f)) 
+ \Pi_h^0\Dev(\sigma_{\rm RT}^f-\PiRT \sigma^f)\|_{0,\Omega}%\lesssim \|\sigma_{\rm RT}^f-\PiRT \sigma^f\|_{0,\Omega}
\lesssim h^2|\ln h|^{\frac{1}{2}},%(\|{\sigma^f}\|_{W_{\infty}^2(\Omega)}+\|{u^f}\|_{2,\Omega}),
\end{aligned}
$$
which completes the proof for the first and third estimates in \eqref{super:pu} of the CR element.
The same argument applies for the ECR element
%$\|\ECRp^f - \frac12 \Pi_h^0\Tr(\PiRT \sigma^f)\|_{0,\Omega}
%+\|\Pih\nabla_h\ECRu^f - \Pi_h^0\Dev\PiRT \sigma^f\|_{0,\Omega}
%\lesssim h^2|\ln h|^{\frac{1}{2}}(\|{\sigma^f}\|_{W_{\infty}^2(\Omega)}+\|{u^f}\|_{2,\Omega})$, 
and completes the proof.
\end{proof} 

\begin{remark}
For the Poisson equation, a similar interpolation to $\PiuCR$ can also be defined such that the numerical solution by the CR element admits a full first-order supercloseness with respect to the new interpolation of the exact solution.
\end{remark}

\subsection{Superconvergence for the nonconforming finite elements}

By the supercloseness above, approximate pressure and velocity with a first-order superconvergence can be constructed by applying an appropriate postprocessing algorithm.
For any piecewise function $\textbf{q}$ taking values in $\X$,
define  $\Kh\textbf{q}\in\CR(\Th,\X)$ as follows.
\begin{definition}\label{Def:R}
1.For each interior edge $e\in\cE_h^i$, the elements $K_e^1$ and $K_e^2$ are the pair of elements sharing $e$. Then the value of $K_h \textbf{q}$ at the midpoint $\textbf{m}$ of $e$ is
$$
K_h \textbf{q}(\textbf{m})={1\over 2}(\textbf{q}|_{K_e^1}(\textbf{m})+\textbf{q}|_{K_e^2}(\textbf{m})).
$$
%$$
%K_h \textbf{q}(\textbf{m}_e)=w_1\textbf{q}|_{K_e^1}(\textbf{m}_e)+w_2\textbf{q}|_{K_e^2}(\textbf{m}_e)
%$$
%with $w_1={|K_e^1| \over |K_e^1| + |K_e^2|}$ and $w_2={|K_e^2| \over |K_e^1| + |K_e^2|}$.

2.For each boundary edge $e\in\cE_h^b$, let $K$ be the element having $e$ as an edge, and $K'$ be an element sharing an edge $e'\in\cE_h^i$ with $K$. Let $e''$ denote the edge of $K'$ that does not intersect with $e$, and $\textbf{m}$, $\textbf{m}'$ and $\textbf{m}''$ be the midpoints of the edges $e$, $e'$ and $e''$, respectively. Then the value of $K_h \textbf{q}$ at the point $\textbf{m}$ is
$$
K_h \textbf{q}(\textbf{m})=2K_h \textbf{q}(\textbf{m}') - K_h \textbf{q}(\textbf{m}'').
$$
%$$
%K_h \textbf{q}(\textbf{m})=w'K_h \textbf{q}(\textbf{m}')-w''K_h \textbf{q}(\textbf{m}'')
%$$
%with $w'={|K| + |K'| \over |K|}$ and $w''={|K'| \over |K|}$.
\begin{center}
\begin{tikzpicture}[xscale=2,yscale=2]
\draw[-] (-0.5,0) -- (2,0);
\draw[-] (0,0) -- (0.5,1);
\draw[-] (0.5,1) -- (2,1);
\draw[-] (0.5,1) -- (1.5,0);
\draw[-] (2,1) -- (1.5,0);
\node[below, right] at (1,0.5) {\textbf{m}'};
\node[above] at (1.25,1) {\textbf{m}''};
\node[below] at (0.75,0) {\textbf{m}};
\node at (0.7,0.4) {K};
\node at (1.4,0.75) {K'};
\node at (1,0) {e};
\node at (1.4,0.2) {e'};
\node at (1.7,1) {e''};
\node at (0.3,-0.1) {$\partial \Omega $};
\fill(1,0.5) circle(0.5pt);
\fill(1.25,1) circle(0.5pt);
\fill(0.75,0) circle(0.5pt);
\end{tikzpicture}
\end{center}
\end{definition}

As pointed out in \cite[Theorem 5.1]{Brandts1994Superconvergence} for the Poisson equation, $\Kh\PiRT\sigma$ is a higher order approximation of $\sigma$ than $\PiRT\sigma$ itself. The lemma below shows that for an even less continuous function $\Pih\PiRT\sigma$, an application of the postprocessing operator $K_h$ % to $\Pih\PiRT\sigma$ 
can still achieve a higher order approximation of $\sigma$, which is vital in analyzing the superconvergence for the Stokes equation.

\begin{lemma}\label{Khpro}
Suppose $\sigma\in H^2(\Omega,\Rdd)$, it holds on uniform triangulations
$$
\|\sigma-\Kh\Pih\PiRT\sigma\|_{0,\Omega}\lesssim h^2|\sigma|_{2,\Omega}.
$$
\end{lemma}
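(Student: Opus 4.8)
The plan is to compare $\Kh\Pih\PiRT\sigma$ with the better-understood postprocessed interpolant $\Kh\PiRT\sigma$. By the triangle inequality,
\[
\|\sigma - \Kh\Pih\PiRT\sigma\|_{0,\Omega} \le \|\sigma - \Kh\PiRT\sigma\|_{0,\Omega} + \|\Kh(I-\Pih)\PiRT\sigma\|_{0,\Omega}.
\]
The first term is $\lesssim h^2|\sigma|_{2,\Omega}$ by the Poisson-type superconvergence result applied row by row, \cite[Theorem 5.1]{Brandts1994Superconvergence}, so everything reduces to showing that the correction $\Kh(I-\Pih)\PiRT\sigma$ is itself of order $h^2$. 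Since $\Kh$ is linear and maps into $\CR(\Th,\Rdd)$, this correction is a nonconforming $P_1$ function whose only data are its values $A_e$ at the edge midpoints $\bmm$; the crux is to prove that these midpoint values are pointwise of order $h^2|\sigma|_{2}$, so that the gain comes entirely from a cancellation built into $\Kh$ on the uniform mesh.

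First I would record the structure of $(I-\Pih)\PiRT\sigma$. On each $K$, the $i$-th row of $\PiRT\sigma$ lies in $\RT(K,\Rd)=\bbP_0+\bmx\bbP_0$, hence is affine with an isotropic linear part, $(\PiRT\sigma)_i|_K=\bmc_{K,i}+d_{K,i}\bmx$; because the $L^2$-average of an affine function is its value at the centroid, $(I-\Pih)(\PiRT\sigma)_i|_K=d_{K,i}(\bmx-\MK)$. Taking the divergence and using the commuting relation $\int_K\Div(I-\PiRT)\sigma_i\dx=0$ stated above gives the closed form $d_{K,i}=\frac{1}{2|K|}\int_K\Div\sigma_i\dx$, i.e.\ $d_{K,i}$ is a cell average of $\tfrac12\Div\sigma_i$. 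Exactness of the midpoint quadrature rule on $\bbP_2(K)$ then yields $\|\Kh(I-\Pih)\PiRT\sigma\|_{0,\Omega}^2=\sum_K\frac{|K|}{3}\sum_{e\subset\partial K}|A_e|^2\lesssim h^2\sum_{e\in\Fh}|A_e|^2$, so it remains to bound $\sum_e|A_e|^2$ by $h^2|\sigma|_{2,\Omega}^2$.

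The heart of the argument is the evaluation of $A_e$ at an interior edge $e=K_1\cap K_2$. By the averaging rule of Definition \ref{Def:R} and the formula above, the $i$-th row of $A_e$ equals $\frac12\big(d_{K_1,i}(\bmm-\boldsymbol{M}_{K_1})+d_{K_2,i}(\bmm-\boldsymbol{M}_{K_2})\big)$. Here the uniform-mesh hypothesis enters decisively: $K_1$ and $K_2$ form a parallelogram whose center is exactly the midpoint $\bmm$ of the shared diagonal $e$, and the central symmetry about $\bmm$ interchanges the two triangles, hence their centroids, so $\bmm-\boldsymbol{M}_{K_2}=-(\bmm-\boldsymbol{M}_{K_1})=:-\boldsymbol{d}_e$ with $|\boldsymbol{d}_e|\lesssim h$. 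The two centroid terms therefore collapse to a gradient-jump expression, row $i$ of $A_e$ equal to $\frac12(d_{K_1,i}-d_{K_2,i})\boldsymbol{d}_e^{T}$. Since $d_{K,i}$ is a cell average of $\tfrac12\Div\sigma_i\in H^1$, a patchwise Poincar\'e estimate on $K_1\cup K_2$ gives $|d_{K_1,i}-d_{K_2,i}|\lesssim|\sigma_i|_{2,K_1\cup K_2}$, whence $|A_e|\lesssim h|\sigma|_{2,K_1\cup K_2}$. Summing over interior edges with finite overlap gives $\sum_e|A_e|^2\lesssim h^2|\sigma|_{2,\Omega}^2$, which combined with the previous display completes the bound. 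Boundary midpoints are handled by the linearity of the extrapolation rule of Definition \ref{Def:R}, which expresses each boundary value as a fixed combination of two already-controlled interior values, contributing only lower-order terms.

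The hard part will be the third step. Without the cancellation a single midpoint value $A_e$ is only $O(h)$, which would yield the useless $L^2$ bound $O(h)$. The uniform (parallelogram) structure is exactly what forces the centroid displacements $\bmm-\boldsymbol{M}_{K_1}$ and $\bmm-\boldsymbol{M}_{K_2}$ to be antisymmetric about $\bmm$, turning the $O(h)$ centroid terms into an $O(h)$ gradient jump times an $O(h)$ displacement; identifying this symmetry and coupling it with the isotropic linear structure $d_K\bmx$ of $\RT$ fields (so that the jump becomes a difference of cell averages of $\Div\sigma$) is where essentially all the work lies. The remaining ingredients — exact midpoint quadrature for the $\CR$ norm, the commuting divergence property, and the patchwise Poincar\'e inequality — are routine.
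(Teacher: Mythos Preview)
Your proof is correct but takes a genuinely different route from the paper's. The paper argues by Bramble--Hilbert: it first shows that the composite operator $\Kh\Pih\PiRT$ reproduces every $r\in\bbP_1(\tilde K,\Rdd)$ on $K$ (using the parallelogram identity $\int_{K\cup K_i}(r-\PiRT r)\dx=0$ from \cite[Lemma 3.1]{Brandts1994Superconvergence}, which passes unchanged through $\Pih$), then establishes the $L^\infty$ stability $\|\Kh\Pih\PiRT\sigma\|_{0,\infty,K}\lesssim\|\sigma\|_{0,\infty,\tilde K}$, and concludes via standard interpolation theory. You instead split off $\|\sigma-\Kh\PiRT\sigma\|_{0,\Omega}$ (already $\mathcal{O}(h^2)$ by \cite[Theorem 5.1]{Brandts1994Superconvergence}) and analyze the correction $\Kh(I-\Pih)\PiRT\sigma$ directly, exploiting the explicit RT structure $(I-\Pih)\PiRT\sigma|_K=d_K(\bmx-\MK)$ with $2d_{K,i}=\frac{1}{|K|}\int_K\Div\sigma_i\dx$, together with the centroid antisymmetry $\bmm-\boldsymbol M_{K_2}=-(\bmm-\boldsymbol M_{K_1})$ on the parallelogram $K_1\cup K_2$, to turn each midpoint value into an $O(h)$ difference of cell averages of $\Div\sigma$ times an $O(h)$ displacement. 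Your argument makes the cancellation mechanism completely explicit and shows that the correction term is controlled by $|\Div\sigma|_{1,\Omega}$ alone, which is a slightly finer piece of information; on the other hand, the paper's proof is shorter, imports less from \cite{Brandts1994Superconvergence} (only the elementary Lemma 3.1 rather than the full Theorem 5.1), and avoids the midpoint-quadrature and edge-summation bookkeeping.
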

\begin{proof}
For any element $K$ with edges $\{e_i\}_{i=1}^3$ , denote the triangle sharing edge~$e_i$ with $K$ by $K_i$. Let $\tilde K$ be the union of triangles sharing vertices with $K$. For any $r\in\bbP_1(\tilde{K},\Rdd)$,  it is proved in \cite[Lemma 3.1]{Brandts1994Superconvergence} that 
$
\int_{K\cup K_i}(r-\PiRT r)\dx=0,
$
which indicates that 
$
\int_{K\cup K_i}(r-\Pih\PiRT r)\dx=0.
$
On the other hand,  
$$
\int_{K\cup K_i}(r-\Pih\PiRT r)\dx = (|K|+|K_i|)\left(r(\bmm_i) - \frac12(\Pi_K^0\PiRT r + \Pi_{K_i}^0\PiRT r)\right),
$$
which indicates that $r(\bmm_i)=\Kh\Pih\PiRT r(\bmm_i)$ when $\bmm_i$ is the midpoint of an interior edge. If $\bmm$ is the midpoint of a boundary edge, $\bmm'$ and $\bmm''$ are interior points, $\Kh\Pih\PiRT r(\bmm)=2\Kh\Pih\PiRT r(\bmm') - \Kh\Pih\PiRT r(\bmm'')=2r(\bmm')-r(\bmm'')=r(\bmm)$ since $r$ is linear in $\tilde K$.
Thus, for any $r\in\bbP_1(\tilde{K},\Rdd)$, $\Kh\Pih\PiRT r=r$ on $K$, namely $\sigma - \Kh\Pih\PiRT\sigma=(I-\Kh\Pih\PiRT)(\sigma -r)$ with $\sigma\in H^2(\Omega,\Rdd)$. Moreover,
\begin{equation}
\label{proKh}
\|\sigma - \Kh\Pih\PiRT\sigma\|_{0,K} \lesssim h \inf_{r\in \bbP_1(\tilde{K},\Rdd)}\|(I-\Kh\Pih\PiRT)(\sigma -r)\|_{0,\infty,K}.
\end{equation}
If $\bmm$ is the midpoint of an interior edge $e=\overline{K_e^1}\cap \overline{K_e^2}$, denote $\omega_{\bmm}=K_e^1\cup K_e^2$. Then,
$$
|\Kh\Pih\PiRT \sigma(\bmm)|=\left|\frac{\Pi_{K_e^1}^0\PiRT\sigma+\Pi_{K_e^2}^0\PiRT\sigma}{2}\right|%\le \max_{1\le i\le 2}\|\PiRT\sigma\|_{0,\infty,K_e^i}
\le \|\PiRT\sigma\|_{0,\infty,\omega_{\bmm}}.
$$
If $\bmm$ is the midpoint of a boundary edge, denote $\omega_{\bmm}=\omega_{\bmm'}\cup\omega_{\bmm''}$. Then, %$\bmm'$ and $\bmm''$ are the midpoints of interior edges,
$$
|\Kh\Pih\PiRT \sigma(\bmm)|=\left|2\Kh\Pih\PiRT \sigma(\bmm')-\Kh\Pih\PiRT \sigma(\bmm'') \right|\le 3\|\PiRT\sigma\|_{0,\infty,\omega_{\bmm}}.
$$
It follows that
\begin{equation}\label{pi0bdd}
\|\Kh\Pih\PiRT \sigma\|_{0,\infty,K}
\leq 3\max_{1\leq i\leq 3}|\Kh\Pih\PiRT \sigma(\bmm_i)|
\leq 9\|\PiRT\sigma\|_{0,\infty,\tilde{K}}.
\end{equation}
%By the definition of $\Kh$, $\Pih$ and $\PiRT$, for all $\sigma\in H^2(\Omega,\Rdd)$, %the fact $\|\Pih\PiRT \sigma\|_{0,\infty,K}\leq \|\PiRT \sigma\|_{0,\infty,K}$ leads to 
%$$
%\begin{aligned}
%\|\Kh\Pih\PiRT \sigma\|_{0,\infty,K}
%&\leq 3\max_{1\leq i\leq 3}|\Kh\Pih\PiRT \sigma(\bmm_i)|
%=3\max_{1\leq i\leq 3}\left|\frac{\Pi_{K_0}^0\PiRT\sigma+\Pi_{K_i}^0\PiRT\sigma}{2}\right|
%\\
%&\leq 3\max_{0\leq i\leq 3}|\Pi_{K_i}^0\PiRT\sigma|
%\leq 3\max_{0\leq i\leq 3}\|\PiRT\sigma\|_{0,\infty,K_i}
%= 3\|\PiRT\sigma\|_{0,\infty,\tilde{K}},
%\end{aligned}
%$$
%where $K_0=K$.
%Since $\|\PiRT \sigma\|_{0,\infty,K_i}\lesssim \|\sigma\|_{0,\infty, K_i}$ in \cite[Theorem 5.1]{Brandts1994Superconvergence},
As proved in \cite[Theorem 5.1]{Brandts1994Superconvergence}, $\PiRT$ is a bounded operator in $L^\infty$-norm,
it holds that $
\|\Kh\Pih\PiRT \sigma\|_{0,\infty,K}\lesssim  \|\sigma\|_{0,\infty,\tilde{K}}.
$
A combination of this and  \eqref{proKh} indicates that
$
\|\sigma - \Kh\Pih\PiRT \sigma\|_{0,K}\lesssim h\|\sigma-r\|_{0,\infty,\tilde K} 
$ for any $r\in\bbP_1(\tilde{K},\Rdd)$.
By the interpolation theory in Sobolev spaces (\cite[Chapter 3]{MR0520174}),  %$\inf_{r\in \bbP_1(\tilde{K},\Rdd)} \|\sigma-r\|_{0,\infty,\tilde K}\lesssim h|\sigma|_{2,\tilde K}$. Thus,
\begin{equation}
\label{proKh1}
\|\sigma-\Kh\Pih\PiRT\sigma\|_{0,K}\lesssim h^2\|\sigma\|_{2,\tilde{K}}.
\end{equation}
By squaring \eqref{proKh1} and summing over all triangles $K\in\Th$, we complete the proof.  
\end{proof}

Thanks to the supercloseness \eqref{supeRT} and Lemma \ref{Khpro}, it holds the following superconvergence results for the Stokes equation.
\begin{theorem}\label{th:superconvergence} 
Let $(u^f,p^f)$, $(\CRu^f, \CRp^f)$, $(\ECRu^f, \ECRp^f)$, $(\sigma^f,u^f)$ and $(\sigma_{\rm RT}^f, \bmu_{\rm RT}^f)$ be the solution of problems \eqref{source}, \eqref{CRwithPih0f}, \eqref{ECRwithPih0f}, \eqref{mixCon} and \eqref{RTPi0u} with source term $f$, respectively. Assume that  $(\sigma^f,u^f)\in W^{2,\infty}(\Omega,\Rdd)\times H^2(\Omega,\Rd)$ and the triangulation is uniform with sufficiently small $h$. Then,
$$
\begin{aligned}
&\|\sigma^f-\Kh\Pih\sigma_{\rm RT}^f\|_{0,\Omega}
+\|\nabla u^f-\Kh\nabla_h\CRu^f\|_{0,\Omega}
+\|\nabla u^f-\Kh\nabla_h\ECRu^f\|_{0,\Omega}
\\
&\quad+\|p^f-\Kh\CRp^f\|_{0,\Omega} +\|p^f-\Kh\ECRp^f\|_{0,\Omega} \lesssim h^2|\ln h|^{\frac{1}{2}}(\|{\sigma^f}\|_{W_{\infty}^2(\Omega)}+\|{u^f}\|_{2,\Omega}).
\end{aligned}
$$
%$$
%\begin{aligned}
%\|\sigma^f-\Kh\Pih\sigma_{\rm RT}^f\|_{0,\Omega}\lesssim h^2|\ln h|^{\frac{1}{2}}(\|{\sigma^f}\|_{W_{\infty}^2(\Omega)}+\|{u^f}\|_{2,\Omega}),
%\\
%\|p^f-\Kh\CRp^f\|_{0,\Omega}+\|p^f-\Kh\ECRp^f\|_{0,\Omega}\lesssim h^2|\ln h|^{\frac{1}{2}}(\|{\sigma^f}\|_{W_{\infty}^2(\Omega)}+\|{u^f}\|_{2,\Omega}),
%\\
%\|\nabla u^f-\Kh\nabla_h\CRu^f\|_{0,\Omega}+\|\nabla u^f-\Kh\nabla_h\ECRu^f\|_{0,\Omega}\lesssim h^2|\ln h|^{\frac{1}{2}}(\|{\sigma^f}\|_{W_{\infty}^2(\Omega)}+\|{u^f}\|_{2,\Omega}).
%\end{aligned}
%$$
\end{theorem}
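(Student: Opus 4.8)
The plan is to bound each of the five summands by the same two-step mechanism: split, via the triangle inequality, into an \emph{interpolation error} between the exact quantity and its postprocessed pseudostress interpolation, plus a \emph{supercloseness error} between the postprocessed interpolation and the postprocessed numerical solution. The interpolation errors will be absorbed by Lemma \ref{Khpro} (together with the classical estimate $\|\sigma-\Kh\PiRT\sigma\|_{0,\Omega}\lesssim h^2|\sigma|_{2,\Omega}$ of \cite[Theorem 5.1]{Brandts1994Superconvergence}), and the supercloseness errors by the $L^2$-stability of $\Kh$ together with Lemma \ref{supeRTlm} and Theorem \ref{th:superclose}.

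First I would record two elementary properties of $\Kh$. Since $\Kh$ is linear and acts component-wise on matrix- and vector-valued fields, it commutes with the constant-coefficient maps $\Dev$ and $\frac12\Tr$; the $L^2$ projection $\Pih$ shares this property. Next, $\Kh$ is $L^2$-stable on piecewise polynomials of any fixed degree: $\Kh g$ is a CR function determined by its midpoint values, each of which is a bounded combination of pointwise values of $g$ on finitely many neighbouring elements (an average at interior edges, the extrapolation $2(\cdot)-(\cdot)$ at boundary edges), so the midpoint norm-equivalence $\|\Kh g\|_{0,K}^2\lesssim|K|\sum_i|\Kh g(\bmm_i)|^2$, the local inverse estimate $\|g\|_{0,\infty,K'}\lesssim|K'|^{-1/2}\|g\|_{0,K'}$, and the finite overlap of patches yield $\|\Kh g\|_{0,\Omega}\lesssim\|g\|_{0,\Omega}$; hence $\Kh\Pih$ is $L^2$-stable too.

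With these in hand I would treat the terms as follows. For the first, I write $\sigma^f-\Kh\Pih\sigma_{\rm RT}^f=(\sigma^f-\Kh\Pih\PiRT\sigma^f)+\Kh\Pih(\PiRT\sigma^f-\sigma_{\rm RT}^f)$ and bound the first summand by Lemma \ref{Khpro} and the second by $L^2$-stability of $\Kh\Pih$ and Lemma \ref{supeRTlm}. For the CR velocity gradient, the commutation gives $\Kh\PiuCR\sigma^f=\Kh\Pih\Dev\PiRT\sigma^f=\Dev\,\Kh\Pih\PiRT\sigma^f$, so applying $\Dev$ to Lemma \ref{Khpro} yields $\|\nabla u^f-\Kh\PiuCR\sigma^f\|_{0,\Omega}\lesssim h^2|\sigma^f|_{2,\Omega}$, and adding the supercloseness term $\|\Kh(\PiuCR\sigma^f-\nabla_h\CRu^f)\|_{0,\Omega}\lesssim\|\PiuCR\sigma^f-\nabla_h\CRu^f\|_{0,\Omega}$ controlled by Theorem \ref{th:superclose} closes this term. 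The two pressure terms are identical after replacing $\Dev$ by $\frac12\Tr$ and the interpolation by $\Pip\sigma^f$, using $\|\CRp^f-\Pip\sigma^f\|_{0,\Omega}$ and $\|\ECRp^f-\Pip\sigma^f\|_{0,\Omega}$ from Theorem \ref{th:superclose}.

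The ECR velocity gradient is the only case that does not reduce verbatim to the $\Kh\Pih\PiRT$ pattern, because $\PiuECR\sigma=\PiRT\sigma-(\Pip\sigma)\bid$ retains the full, non-$\Pih$-projected RT interpolation. Here I would use $\Dev\sigma^f=\sigma^f-p^f\bid$ to write $\nabla u^f-\Kh\PiuECR\sigma^f=(\sigma^f-\Kh\PiRT\sigma^f)-(p^f-\Kh\Pip\sigma^f)\bid$, bounding the first difference by the Brandts estimate and the second by the $\frac12\Tr$-reduction of Lemma \ref{Khpro} already used for the pressure, then adding the supercloseness term from Theorem \ref{th:superclose}. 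Summing the five bounds and using $|\sigma^f|_{2,\Omega}\lesssim\|\sigma^f\|_{W^2_\infty(\Omega)}$ on the bounded domain gives the claimed estimate. I expect the main obstacle to be the $L^2$-stability of $\Kh$ — in particular taming the boundary extrapolation and checking that the argument survives for the non-piecewise-constant ECR gradients — together with the bookkeeping of pushing $\Dev$ and $\frac12\Tr$ through $\Kh$ and $\Pih$ so that every interpolation error collapses onto Lemma \ref{Khpro} or the Brandts estimate.
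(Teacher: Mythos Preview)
Your proposal is correct and follows essentially the same route as the paper: split each term into an interpolation error handled by Lemma~\ref{Khpro} (or the Brandts estimate for the non-$\Pih$-projected ECR piece) and a supercloseness error handled by the $L^2$-stability of $\Kh$ together with Lemma~\ref{supeRTlm}/Theorem~\ref{th:superclose}. Your intermediate step via the pseudostress interpolations $\PiuCR$, $\PiuECR$, $\Pip$ is slightly more explicit than the paper's compressed presentation, but the ingredients and the logic are the same, and your $L^2$-stability argument for $\Kh$ via midpoint norm equivalence and inverse estimates is exactly the mechanism behind the paper's bound~\eqref{pi0bdd}.
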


\begin{proof}
The same argument for \eqref{pi0bdd} proves
$
\|\Kh\Pih(\PiRT\sigma^f-\sigma_{\rm RT}^f)\|_{0,\infty,K}\leq 9\|\Pih(\PiRT\sigma^f-\sigma_{\rm RT}^f)\|_{0,\infty,\tilde{K}}.
$
On a uniform triangulation $\Th$,
$$
\|\Kh \Pih(\PiRT\sigma^f-\sigma_{\rm RT}^f)\|_{0,\Omega}%\leq \|\Kh \Pih(\PiRT\sigma^f-\sigma_{\rm RT}^f)\|_{0,\infty,K}|K|
\leq (\sum_{K\in \Th}81\|\Pih(\PiRT\sigma^f-\sigma_{\rm RT}^f)\|_{0,\tilde{K}}^2)^\frac12.
$$
%$$
%\|\Kh \Pih(\PiRT\sigma^f-\sigma_{\rm RT}^f)\|_{0,K}%\leq \|\Kh \Pih(\PiRT\sigma^f-\sigma_{\rm RT}^f)\|_{0,\infty,K}|K|
%\leq 9\|\Pih(\PiRT\sigma^f-\sigma_{\rm RT}^f)\|_{0,\tilde{K}}.
%$$
%By squaring the inequality above and summing over all triangles $K\in\Th$,
%$
%\|\Kh\Pih(\PiRT\sigma^f-\sigma_{\rm RT}^f)\|_{0,\Omega}\lesssim \|\Pih(\PiRT\sigma^f-\sigma_{\rm RT}^f)\|_{0,\Omega}.
%$
It follows from this and the triangular inequality that
\begin{equation}\label{SCSWZ}
\|\sigma^f-\Kh\Pih\sigma_{\rm RT}^f\|_{0,\Omega}\lesssim\|\sigma^f-\Kh\Pih\PiRT\sigma^f\|_{0,\Omega}+\|\Pih(\PiRT\sigma^f-\sigma_{\rm RT}^f)\|_{0,\Omega}.
\end{equation}
A combination of this, the superconvergence property in Lemma \ref{supeRTlm} and  \ref{Khpro}  leads to  the proof for the first term. By \eqref{usigRelation},
$$
\nabla u^f-\Kh\nabla_h\CRu^f=(\sigma^f-\Kh\Pih\PiRT\sigma^f)-\frac12\Tr(\sigma^f-\Kh\Pih\PiRT\sigma^f)\bid
$$
which, together with  Lemma  \ref{Khpro},  yields the estimates for the CR element. A similar procedure without the projection $\Pih$ as applied in \cite[Theorem 5.1]{Brandts1994Superconvergence} leads to the estimate for the ECR element and completes the proof.
\end{proof}

%\section{Superconvergence results for the Stokes eigenvalue problem} \label{asymptotic}
\section{Asymptotic analysis for the Stokes eigenvalue problem} \label{asymptotic}
In this section, an asymptotic expansion of eigenvalues of the Stokes problem is established to prove an optimal superconvergence rate of the extrapolation algorithm.

The Stokes eigenvalue problem  seeks $(\lambda,\bmu,p)$ with $\Vert{\bmu}\Vert_{0,\Omega} = 1$, $\int_\Omega p \dx=0$ and
\begin{equation}\label{sto1}
\left\{
\begin{aligned}
	- \Delta \bmu - \nabla p &= \lambda \bmu & \mbox{ in }&\Omega,\\
	\Div \bmu &= 0                  & \mbox{ in }&\Omega,\\
	\bmu &= 0     & \mbox{ on }&\partial\Omega.\\
\end{aligned}
\right.
\end{equation}
The weak form of the corresponding velocity-pressure formulation of \eqref{sto1} seeks $(\lambda,\bmu,p)\in\R\times \bmV\times Q$ with $\left\|{\bmu}\right\|_{0,\Omega}=1$ such that
\begin{equation}\label{sto2}
\left\{
\begin{aligned}
	a(\bmu, \bmv)+b(\bmv, p) &=\lambda(\bmu,\bmv) &\forall \bmv\in\bmV,\\
	b(\bmu, q)&=0 & \forall q\in \Q,
\end{aligned}
\right.
\end{equation}
where  the bilinear forms
$a(\bmw,\bmv):=\int_\Omega\nabla\bmw : \nabla\bmv \dx$ and $b(\bmv, q):=\int_\Omega\Div\bmv~q\dx.$
Let $\bmZ:=\left\{\bmv\in\bmV:~\Div\bmv=0\right\}$. Weak form \eqref{sto2} indicates that $(\lambda,\bmu)\in\R\times\bmZ$ satisfies
\begin{equation}\label{stoz1}
a(\bmu,\bmv)=\lambda(\bmu,\bmv) \qquad \forall \bmv\in\bmZ.
\end{equation}
The eigenvalue problems \eqref{sto2} and \eqref{stoz1} have the same eigenvalue sequence
$0<\lambda_1 \leq \lambda_2 \leq \lambda_3 \leq \cdots \nearrow+\infty,$
and the corresponding eigenfunctions of  \eqref{sto2} read
$\left(\bmu_1,p_1\right),\left(\bmu_2,p_2\right), \left(\bmu_3,p_3\right),\cdots$
with
$\left(\bmu_i,\bmu_j\right)=\delta_{ij}\mbox{ with } \delta_{ij}
%=\begin{cases}
%	0&i \neq j\\
%	1&i = j
%\end{cases}.
$
is the Kronecker symbol. See \cite{MR1115240} for more details. %If $\lambda$ is an eigenvalue of \eqref{sto2} with multiplicity $q$,
For an eigenvalue $\lambda$ of \eqref{sto2}, define the eigenfunction space  
$
M(\lambda)=\left\{(\bmw, q) \in \bmV \times \Q :~(\lambda,\bmw, q) \mbox{ is a solution of \eqref{sto2} }\right\}.
$

% Given the triangulation $\Th$, let $\bmVh$ be a nonconforming finite element approximation of % %$\bmV$, and~$\Qh$ be the piecewise constant space.
%		$$
%		\Qh:=\left\{p\in Q: ~ p|_K\in \bbP_0(K,\R) \mbox{ for any } K\in\Th\right\},
%		$$
%which is a conforming finite element approximation of $\Q$. 
The  finite element approximation of \eqref{sto2} is to find $\left(\lambda_h,\bmu_h, p_h\right)\in\R\times\bmVh\times \Qh$ with $\left\|{\bmu_h}\right\|_{0,\Omega}=1$ such that
\begin{equation}\label{sto3}
\left\{
\begin{aligned}
    a_h\left(\bmu_h, \bmv_h\right)+b_h\left(\bmv_h, p_h\right) &=\lambda_h(\bmu_h,\bmv_h) &\forall \bmv_h\in\bmVh,\\
	b_h\left(\bmu_h, q_h\right)&=0  &\forall q_h\in \Qh,
\end{aligned}
\right.
\end{equation}
where %the discrete bilinear form
$\displaystyle
a_h(\bmw_h,\bmv_h):=\sum_{K\in\Th}\int_K\nabla_h\bmw_h:\nabla_h\bmv_h\dx$, $
b_h(\bmv_h,q_h):=\sum_{K\in\Th}\int_K\Divh\bmv_h q_h\dx.
$
%Define a subspace of $\bmVh$ by
%$$\Zh:=\left\{\bmv\in\bmVh:~b_h(\bmv, p)=0,\mbox{ for all } p\in\Qh\right\},$$
%and $N=\mbox{dim}\Zh$. 
Then the discrete eigenpair $(\lambda_h,\bmu_h)\in\R\times\Zh$ satisfies that
\begin{equation}\label{stoz2}
	a_h(\bmu_h,\bmv_h)=\lambda_h(\bmu_h,\bmv_h)\qquad \forall \bmv_h\in\Zh,
\end{equation}
where $\Zh:=\left\{\bmv_h\in\bmVh:~b_h(\bmv_h, p_h)=0,\mbox{ for all } p_h\in\Qh\right\}$ with $N=\mbox{dim}\Zh$. Denote  $(\lambda_h,\bmu_h, p_h, Z_h)$ in the CR element space $\VCR$ by $(\CRlam, \CRu,\CRp,\ZCR)$, and those in  the ECR element space $\VECR$ by $(\ECRlam, \ECRu,\ECRp,\ZECR)$.
A subscript $i$ is added to distinguish the approximate eigenpairs related to different eigenvalues. For example, $\left(\bmu_{\mathrm{CR},i},p_{\mathrm{CR},i}\right)$ is the discrete solution by the CR element related to the $i$-th eigenvalue $\lambda_{\mathrm{CR},i}$.
%To be specific, 
%denote the approximate solution of \eqref{sto3} in $(\R,\VCR,\Qh)$  by $\left(\CRlam^i,\CRu^i,\CRp^i\right)$ with
%		$
%		0<\CRlam^1\leq\CRlam^2\leq\cdots\le\CRlam^N
%		$
%		and $(\CRu^i,\CRu^j)=\delta_{ij}$, and  the approximate solution of \eqref{sto3} in $(\R,\VECR,\Qh)$  by $\left(\ECRlam^i,\ECRu^i,\ECRp^i\right)$ with
%		$
%		0<\ECRlam^1\leq\ECRlam^2\leq\cdots\le\ECRlam^N
%		$
%		and $(\ECRu^i,\ECRu^j)=\delta_{ij}$.
It follows from the theory of nonconforming eigenvalue approximations in \cite{jia2014posterior,lovadina2009posteriori} that if the domain is convex and $M(\lambda)\subset \left(V\cap H^2(\Omega)\right)\times (\Q \cap H^1(\Omega))$, there exists $\left(u, p\right)\in M(\lambda)$ with $\lambda=\lambda_i$ such that
\begin{equation}\label{CRest1}
|\lambda-\lambda_h| + \|\bmu-\bmu_h\|_{0,\Omega} + \|\bmu-\Pi_h\bmu\|_{0,\Omega} + h\|\nabla_h(\bmu-\bmu_h)\|_{0,\Omega}\lesssim h^2(\|u\|_{2,\Omega} + \|p\|_{1,\Omega}),
\end{equation}
where $\left(\lambda_h,\bmu_h, p_h,\Pi_h\right)=\left(\lambda_{\mathrm{CR},i}, \bmu_{\mathrm{CR},i}, p_{\mathrm{CR},i},\PiCR\right)$ or $\left(\lambda_{\mathrm{ECR},i}, \bmu_{\mathrm{ECR},i}, p_{\mathrm{ECR},i},\PiECR\right)$ are the solutions and the canonical interpolations of \eqref{sto3} by the CR element or the ECR element, respectively.
%Suppose that $\left(\CRlam, \CRu, \CRp\right)$ is the solution of \eqref{sto3} by the CR element and $\CRlam$ is the $i-$th eigenvalue, it follows from the theory of nonconforming eigenvalue approximations; see \cite{jia2014posterior,lovadina2009posteriori} for instance, and the references therein, that there exists $\left(u, p\right)\in M(\lambda)$ with $\lambda=\lambda_i$ such that
%\begin{equation}\label{CRest1}
%|\lambda-\CRlam| + \|\bmu-\CRu\|_{0,\Omega} + \|\bmu-\PiCR\bmu\|_{0,\Omega} + h\|\nabla_h(\bmu-\CRu)\|_{0,\Omega}\lesssim h^2(\|u\|_{2,\Omega} + \|p\|_{1,\Omega}),
%\end{equation}
%provided that the domain is convex and $M(\lambda)\subset \left(V\cap H^2(\Omega)\right)\times (\Q \cap H^1(\Omega))$. Similarly, for any solution $\left(\ECRlam, \ECRu, \ECRp\right)$ of \eqref{sto3} by the ECR element with $\ECRlam$ be the $i-$th eigenvalue, there exists $\left(u, p\right)\in M(\lambda)$ with $\lambda=\lambda_i$ such that
%\begin{equation}\label{ECRest1}
%|\lambda-\ECRlam| + \|\bmu-\ECRu\|_{0,\Omega} + \|\bmu-\PiECR\bmu\|_{0,\Omega} + h\|\nabla_h(\bmu-\ECRu)\|_{0,\Omega}\lesssim h^2(\|u\|_{2,\Omega} + \|p\|_{1,\Omega}),
%\end{equation}
%provided that the domain is convex and $M(\lambda)\subset \left(V\cap H^2(\Omega)\right)\times (\Q \cap H^1(\Omega))$.
Whenever there is no ambiguity, $\left(\lambda, \bmu, p\right)$ defined this way is called the corresponding eigenpair to $\left(\lambda_{\mathrm{CR}\!,i} , \bmu_{\mathrm{CR}\!,i}, p_{\mathrm{CR}\!,i}\right)$ and  $\left(\lambda_{\mathrm{ECR}\!,i}, \bmu_{\mathrm{ECR}\!,i}, p_{\mathrm{ECR}\!,i} \right)$ of problem \eqref{sto3} if the estimate \eqref{CRest1} holds.
%This also works for the ECR element if \eqref{ECRest1} holds.
%\textcolor{red}{
%\begin{equation}\label{CRest}
%h^i\|u-\CRu\|_i + h^i\|u-\PiCR u\|_{0,\Omega} + h^i\|u-\CRuS\|_i  + h^i\|u-\CRuSt\|_i + |\lambda-\CRlam|\lesssim h^2|u|_2,
%\end{equation}
%\begin{equation}\label{ECRest}
%h^i\|u-\ECRu\|_i + h^i\|u-\PiECR u\|_{0,\Omega} + h^i\|u-\ECRuS\|_i  + h^i\|u-\ECRuSt\|_i + |\lambda-\ECRlam|\lesssim h^2|u|_2.
%\end{equation}
%}

There holds  the following commuting property for both nonconforming elements,
%There hold the commuting properties below for the canonical interpolation $\Pi_h$ of these elements
\begin{align}
\int_K \nabla_h\left(\bmw-\Pi_h \bmw\right) : \nabla_h \bmv_h \dx=0 &\quad \mbox{ for any } \bmw \in \bmV, \bmv_h \in \bmVh,\label{commuting}
\\
\int_K\Divh(\bmw-\Pi_h\bmw)q_h~ \dx=0& \quad \mbox{ for any } q_h\in \Qh,\label{interp2}
\end{align}
where $(\Pi_h,\bmVh)=(\PiCR,\VCR)$ or $(\PiECR,\VECR)$.
%\begin{equation}\label{commuting}
%	\int_K \nabla_h\left(\bmw-\Pi_h \bmw\right) : \nabla_h \bmv_h \dx=0 \quad \mbox{ for any } \bmw \in \bmV, \bmv_h \in \bmVh,
%\end{equation}
%where $(\Pi_h,\bmVh)=(\PiCR,\VCR)$ or $(\PiECR,\VECR)$, and
%\begin{equation}\label{interp2}
%	\int_K\Divh(\bmw-\Pi_h\bmw)q_h~ \dx=0 \quad \mbox{ for any } q_h\in \Qh,
%\end{equation}
%where $\Pi_h=\PiCR$ or $\PiECR$.
%\begin{equation}\label{commuting}
%\begin{aligned}
%	\int_K \nabla\left(\bmw-\PiCR \bmw\right) : \nabla \bmv_h \dx=0 \quad \mbox{ for any } \bmw \in \bmV, \bmv_h \in \VCR,\\
%	\int_K \nabla\left(\bmw-\PiECR \bmw\right) : \nabla \bmv_h \dx=0 \quad \mbox{ for any } \bmw \in \bmV, \bmv_h \in \VECR,
%\end{aligned}
%\end{equation}
%and
%\begin{equation}\label{interp2}
%\begin{aligned}
%	\int_K\Divh(\bmw-\PiCR\bmw)q~ \dx=0 \quad \mbox{ for any } q\in \QCR,\\
%	\int_K\Divh(\bmw-\PiECR\bmw)q~ \dx=0 \quad \mbox{ for any } q\in \QECR,\\
%\end{aligned}
%\end{equation}
See \cite{MR0343661, MR3163260} for more details.
Note that the eigenpair $(\lambda, \bmu)\in\R\times\bmZ$ satisfies  \eqref{stoz1}, and the discrete eigenpair $(\lambda_h, \bmu_h)\in\R\times\Zh$ by both elements satisfies  \eqref{stoz2}. By the commuting property \eqref{commuting} and the analysis in  \cite{MR4078805, MR4350533}, there exists the following expansion of  approximate eigenvalues
\begin{equation}\label{CRlamexap1}
	\lambda-\lambda_h = \left\|{\nabla_h(\bmu-\bmu_h)}\right\|^2_{0,\Omega}-2\lambda_h(\bmu-\Pi_h\bmu,\bmu_h)-\lambda_h\left\|{\bmu-\bmu_h}\right\|^2_{0,\Omega},
\end{equation}
where the commuting property \eqref{interp2} is required here to ensure that $\Pi_h\bmu\in \Zh$.% with $\Pi_h=\PiCR$ or $\PiECR$ for the CR element and  the ECR element, respectively.

%It is difficult to establish an asymptotic expansion for the consistency error term $a_h(\bmu,\bmu_h)-\lambda_h(\bmu,\bmu_h)$ directly. Follwing the ideal in \cite{MR4350533}, employ the canonical interpolation operator $\PiCR$ of the CR element and the crucial commuting property \eqref{commuting}. The consistency error term can be expressed in terms of the interpolation error
%\begin{equation}\label{CRsaux}
%a_h(\bmu,\bmu_h)-\CRlam(\bmu,\CRu)
%=a_h(\PiCR\bmu,\bmu_h)-\CRlam(\bmu,\CRu).
%\end{equation}
%$\bmu\in\bmZ$ and $\PiCR$ satisfy the crucial commuting property \eqref{interp2} lead to $\PiCR\bmu\in\ZCR$. Then by \eqref{stoz2},
%$$
%a_h(\PiCR\bmu,\bmu_h)=\CRlam(\bmu_h,\PiCR\bmu),
%$$
%this combine \eqref{CRsaux} show
%$$
%a_h(\bmu,\bmu_h)-\CRlam(\bmu,\CRu) = -\CRlam(\bmu-\PiCR\bmu,\CRu).
%$$
%As a result, the identity \eqref{CRlamexap1} becomes
%\begin{equation}\label{CRlamexap2}
%	\lambda-\CRlam = \left\|{\nabla_h(\bmu-\CRu)}\right\|^2_{0,\Omega} - 2\CRlam(\bmu-\PiCR\bmu,\CRu)-\CRlam\left\|{\bmu-\CRu}\right\|^2_{0,\Omega}.
%\end{equation}
%Since the ECR element also admits a commuting property, a similar identity to \eqref{CRlamexap2} holds for approximate eigenpair $(\ECRlam,\ECRu)$, and leads to the asymptotic expansions of eigenvalues of the ECR element.
%%%%%%%%%%%%%%%%%%

\subsection{Error expansions for eigenvalues}
%The relation between the solutions by the mixed and the nonconforming elements is the key to establishing the asymptotic expansion of eigenvalues by the nonconforming elements.
%Different from the case in \cite{MR4350533,MR4456710} for the Laplace operator, there exists an extra approximate pressure term in both \eqref{equivenceofRTandCR} and \eqref{equivenceofRTandECR}. The superclose property of the extra pressure term plays a vital role in the asymptotic analysis. 
%
%
%
%Thanks to the superclose result \eqref{super:pu}, the superconvergence results for Stokes eigenvalue problems through extrapolation methods can be analyzed in this section.
%\subsection{Asymptotic expansion of eigenvalues by the CR element} 
%In this subsection, asymptotic expansions of eigenvalues are established for the CR element, and then employed to prove an optimal convergence of eigenvalues by extrapolation methods.
%

By the commuting property \eqref{commuting} of~$\PiCR$, a similar analysis to that in \cite[Theorem 3.2]{MR4350533} yields
	\begin{equation}\label{ExpandlambdaminuslambdaCR}
	\lambda-\CRlam=\left\|\nabla_h(u-\CRu)\right\|^2_{0,\Omega}-2 \lambda(u-\PiCR u, u)+\mathcal{O}(h^4).
	\end{equation}
The key for an optimal asymptotic analysis of eigenvalues is to establishing the expansion of $\left\|\nabla_h(u-\CRu)\right\|_{0,\Omega}$ with accuracy $\mathcal{O}(h^4)$.

As presented in the lemma below, the difference of approximate solutions with different source terms converges at a higher rate, where the omitted analysis is similar to that in \cite{MR4350533} and \cite{MR4456710}.

%First, we present some superclose properties between the approximate solutions of the Stokes equation with different source terms, where the omitted analysis is similar to that in \cite{MR4350533} and \cite{MR4456710}.
\begin{lemma}
Let $(\CRuS,\CRpS)$ and $(\CRuSt,\CRpSt)$ be the solutions of  problem \eqref{CRwithPih0f} with $f=\lambda \Pi_h^0\bmu$ and $\lambda \bmu$, respectively, $(\ECRuS , \ECRpS)$ and $(\ECRuSt,\ECRpSt)$ be the solutions of  problem \eqref{ECRwithPih0f} with $f=\lambda \Pi_h^0\bmu$ and $\lambda \bmu$, respectively, $(\CRlam, \CRu,\CRp)$ and $(\ECRlam,\ECRu,\ECRp)$ be the solutions of problem \eqref{sto3} by $\rm CR$ element and $\rm ECR$ element, respectively. It holds that
\begin{align}\label{supCRu}
\|\nabla_h (\CRuS-\CRuSt) \|_{0,\Omega} + \left\|{\nabla_h\left(\CRuS-\CRu\right)}\right\|_{0,\Omega}  \lesssim h^2(\left\|{u}\right\|_{2,\Omega}+\|p\|_{1,\Omega}),
\\
\label{supcloseECR}
\|\nabla_h (\ECRuS-\ECRuSt) \|_{0,\Omega} + \|\nabla_h (\ECRuS-\ECRu ) \|_{0,\Omega}\lesssim h^2(\left\|{u}\right\|_{2,\Omega}+\|p\|_{1,\Omega}).
\end{align}
%\begin{equation}\label{supcloseECR}
%\|\nabla_h (\ECRuS-\ECRuSt) \|_{0,\Omega} + \|\nabla_h (\ECRuS-\ECRu ) \|_{0,\Omega}\lesssim h^2(\left\|{u}\right\|_{2,\Omega}+\|p\|_{1,\Omega}).
%\end{equation} 
\end{lemma}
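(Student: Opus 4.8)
The plan is to write every quantity appearing in the statement as the image of a single discrete solution operator and to exploit the stability of that operator together with the $L^2$-orthogonality of $\Pih$. For the CR element, let $T_h:L^2(\Omega,\Rd)\to\VCR$ send a source $g$ to the velocity component $\CRu^g$ of the solution of \eqref{CRwithPih0f}. Testing the first equation of \eqref{CRwithPih0f} with $v_h=T_hg$ and using the discrete divergence-free property $(\Divh T_hg,q_h)=0$ to kill the pressure term $(\Divh v_h,\CRp^g)$, followed by the discrete Poincar\'e--Friedrichs inequality $\|w_h\|_{0,\Omega}\lesssim\|\nabla_h w_h\|_{0,\Omega}$ on $\VCR$, I obtain the uniform stability bound $\|\nabla_h T_h g\|_{0,\Omega}\lesssim\|g\|_{0,\Omega}$. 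With this notation $\CRuS=T_h(\lambda\Pih u)$ and $\CRuSt=T_h(\lambda u)$, while reading the eigenproblem \eqref{sto3} as a source problem with right-hand side $\CRlam\CRu$ and pressure $\CRp$ gives also $\CRu=T_h(\CRlam\CRu)$.

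For the first difference linearity gives $\CRuS-\CRuSt=T_h(\lambda(\Pih u-u))$, which is exactly the quantity controlled as $e_h^u$ in the proof of Theorem \ref{th:superclose}. A direct use of stability here would yield only $\mathcal{O}(h)$, so the \textbf{key step} is to recover the full order by orthogonality: setting $w_h=\CRuS-\CRuSt$ and testing, $\|\nabla_h w_h\|_{0,\Omega}^2=\lambda((\Pih-I)u,w_h)=\lambda((\Pih-I)u,(I-\Pih)w_h)$, where inserting $(I-\Pih)$ is legitimate because $(\Pih-I)u$ is $L^2$-orthogonal to the piecewise constant $\Pih w_h$. Bounding the two factors by $\|(I-\Pih)u\|_{0,\Omega}\lesssim h|u|_{1,\Omega}$ and $\|(I-\Pih)w_h\|_{0,\Omega}\lesssim h\|\nabla_h w_h\|_{0,\Omega}$ and dividing yields $\|\nabla_h(\CRuS-\CRuSt)\|_{0,\Omega}\lesssim h^2\|u\|_{2,\Omega}$.

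For the second difference I would invoke the triangle inequality together with the bound just obtained, so that it suffices to estimate $\CRuSt-\CRu=T_h(\lambda u-\CRlam\CRu)$. Splitting $\lambda u-\CRlam\CRu=\lambda(u-\CRu)+(\lambda-\CRlam)\CRu$ and applying stability term by term, the first contribution is $\lesssim\|u-\CRu\|_{0,\Omega}\lesssim h^2(\|u\|_{2,\Omega}+\|p\|_{1,\Omega})$ by \eqref{CRest1}, while the second equals $|\lambda-\CRlam|\,\|\nabla_h T_h\CRu\|_{0,\Omega}=\frac{|\lambda-\CRlam|}{\CRlam}\|\nabla_h\CRu\|_{0,\Omega}$; since the normalization $\|\CRu\|_{0,\Omega}=1$ forces $\|\nabla_h\CRu\|_{0,\Omega}^2=\CRlam$ to be bounded and $|\lambda-\CRlam|\lesssim h^2$ by \eqref{CRest1}, this too is $\mathcal{O}(h^2)$. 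Adding the two contributions gives \eqref{supCRu}.

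The ECR estimate \eqref{supcloseECR} follows verbatim with $T_h$ replaced by the ECR solution operator of \eqref{ECRwithPih0f}: the pressure term again drops because $\Pih\Divh\ECRu=0$, the stability and discrete Poincar\'e inequalities hold on $\VECR$, the normalization yields $\|\nabla_h\ECRu\|_{0,\Omega}^2=\ECRlam$, and \eqref{CRest1} supplies the same $\mathcal{O}(h^2)$ bounds for $|\lambda-\ECRlam|$ and $\|u-\ECRu\|_{0,\Omega}$. The hard part throughout is the first difference: one must resist bounding $\lambda(\Pih u-u)$ crudely in $L^2$, and instead use the orthogonality of the piecewise-constant projection against element means to extract the extra power of $h$.
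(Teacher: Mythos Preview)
Your argument is correct. The paper itself omits the proof of this lemma, pointing instead to \cite{MR4350533,MR4456710}; the only place it handles the analogous quantity is inside the proof of Theorem~\ref{th:superclose}, where the difference $e_h^u=\CRu^f-\CRu^{\Pih f}$ is estimated by a different route: it introduces the continuous Stokes problem with source $(I-\Pih)f$, invokes the $H^{-1}$ wellposedness $\|e_u\|_{1,\Omega}+\|e_p\|_{0,\Omega}\lesssim\|(I-\Pih)f\|_{-1,\Omega}\lesssim h^2\|f\|_{1,\Omega}$, and then passes back to the discrete solution via the a~priori error bound. Your orthogonality trick $((\Pih-I)u,w_h)=((\Pih-I)u,(I-\Pih)w_h)$ is more elementary---it avoids continuous regularity and the auxiliary problem entirely, needing only the discrete Poincar\'e inequality and the approximation property of $\Pih$---though it does not simultaneously yield the pressure bound that the paper's detour provides (not needed here anyway).

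For the second difference your decomposition $\lambda u-\CRlam\CRu=\lambda(u-\CRu)+(\lambda-\CRlam)\CRu$ together with the identity $T_h(\CRlam\CRu)=\CRu$ is precisely the standard device; the same splitting appears verbatim later in the proof of Lemma~\ref{th22}. The ECR case goes through for the reasons you state: $(\Divh w_h,q_h)=0$ for $q_h\in\Qh$ still kills the pressure term when you test with $v_h=w_h$, even though $\Divh w_h$ is no longer piecewise constant.
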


By use of the supercloseness \eqref{super:pu} of the CR element,   the discrete eigenvalues can be expressed in terms of the interpolation error as proved in the lemma below, which is vital in the asymptotic analysis of eigenvalue.

\begin{lemma}\label{th22}
Let $(\lambda, \bmu, p)$ and $(\CRlam, \CRu, \CRp)$ be the solutions of problem \eqref{sto1} and \eqref{sto3} by the $\rm CR$ element, respectively, and $\sigma = \nabla \bmu + p\bid$. Assume that $(\sigma,u)\in W^{2,\infty}(\Omega,\Rdd)\times H^2(\Omega,\Rd)$. On a uniform triangulation with sufficiently small $h$, %it holds that
\begin{equation*}
     \lambda-\CRlam 
	=\left\|\nabla u - \PiuCR\sigma\right\|^2_{0,\Omega}
-2 \lambda(u-\PiCR u, u) +\mathcal{O}(h^4|\ln h| (\|{\sigma}\|^2_{2,\infty, \Omega}+\|{u}\|^2_{2,\Omega}) ).
\end{equation*} 
\end{lemma}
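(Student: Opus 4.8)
The plan is to start from the eigenvalue expansion \eqref{ExpandlambdaminuslambdaCR}, $\lambda-\CRlam=\|\nabla_h(u-\CRu)\|^2_{0,\Omega}-2\lambda(u-\PiCR u,u)+\mathcal{O}(h^4)$, and to show that the gradient term can be traded for the interpolation term without losing more than $\mathcal{O}(h^4|\ln h|)$. Setting $X:=\nabla u-\PiuCR\sigma$ and $Y:=\PiuCR\sigma-\nabla_h\CRu$, so that $\nabla_h(u-\CRu)=X+Y$, the identity $\|X+Y\|^2_{0,\Omega}=\|X\|^2_{0,\Omega}+2(X,Y)+\|Y\|^2_{0,\Omega}$ reduces the whole proof to establishing that $2(X,Y)+\|Y\|^2_{0,\Omega}=\mathcal{O}(h^4|\ln h|)$; the remaining term $\|X\|^2_{0,\Omega}=\|\nabla u-\PiuCR\sigma\|^2_{0,\Omega}$ is exactly the announced leading term.

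First I would dispose of $\|Y\|_{0,\Omega}$. Let $\CRuSt$ be the solution of \eqref{CRwithPih0f} with source $f=\lambda u$; since $u$ solves \eqref{source} with the same source, $\sigma^{\lambda u}=\sigma$, and Theorem \ref{th:superclose} gives $\|\nabla_h\CRuSt-\PiuCR\sigma\|_{0,\Omega}\lesssim h^2|\ln h|^{\frac12}(\|\sigma\|_{2,\infty,\Omega}+\|u\|_{2,\Omega})$, while the preceding lemma (see \eqref{supCRu}) gives $\|\nabla_h(\CRuSt-\CRu)\|_{0,\Omega}\lesssim h^2(\|u\|_{2,\Omega}+\|p\|_{1,\Omega})$. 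The triangle inequality yields $\|Y\|_{0,\Omega}\lesssim h^2|\ln h|^{\frac12}(\|\sigma\|_{2,\infty,\Omega}+\|u\|_{2,\Omega})$, hence $\|Y\|^2_{0,\Omega}=\mathcal{O}(h^4|\ln h|)$. It is convenient to record here the structural splitting $Y=Y_1+Y_2$ with $Y_1:=\Pih\Dev(\PiRT\sigma-\sigma_{\rm RT})$ and $Y_2:=\nabla_h(\CRuS-\CRu)$, obtained from \eqref{CRp0withsigma}; both pieces are $\mathcal{O}(h^2|\ln h|^{\frac12})$ in $L^2$.

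The cross term $(X,Y)$ is the heart of the matter, since a direct Cauchy--Schwarz bound only gives $\|X\|_{0,\Omega}\|Y\|_{0,\Omega}\lesssim h\cdot h^2|\ln h|^{\frac12}=\mathcal{O}(h^3|\ln h|^{\frac12})$, one power short of what is needed. To recover the missing order I would exploit the algebraic structure of $Y$. Because $\CRu\in\ZCR$ and $\Divh\CRu$ is piecewise constant, testing against itself forces $\Divh\CRu=0$, so $\nabla_h\CRu$ is trace-free; as $\PiuCR\sigma=\Pih\Dev\PiRT\sigma$ is trace-free by construction, $Y$ is a piecewise-constant, deviatoric field. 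Using that $Y$ is piecewise constant removes the projection, $(\Pih\Dev\PiRT\sigma,Y)=(\Dev\PiRT\sigma,Y)$, and using that $\Dev$ is self-adjoint together with $\Dev Y=Y$ moves the deviatoric operator, giving the clean identity $(X,Y)=(\Dev(\sigma-\PiRT\sigma),Y)=(\sigma-\PiRT\sigma,Y)$.

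It then remains to prove $(\sigma-\PiRT\sigma,Y)=\mathcal{O}(h^4|\ln h|)$, which I expect to be the main obstacle precisely because the crude estimate fails and a genuine uniform-mesh cancellation must be extracted. For the gradient piece $(\sigma-\PiRT\sigma,Y_2)=(\sigma-\PiRT\sigma,\nabla_h(\CRuS-\CRu))$ I would integrate by parts element by element: the volume contribution is handled because $\Div(\sigma-\PiRT\sigma)$ has zero mean on each $K$ (by the defining property of $\PiRT$), and the edge contributions are handled by combining the RT edge condition $\int_e(\sigma-\PiRT\sigma)\bmn_e\ds=0$ with the weak continuity $\int_e[\CRuS-\CRu]\ds=0$ of the CR space, so that both factors on each edge have vanishing mean and gain an extra power of $h$; this yields $\mathcal{O}(h^2)\|\nabla_h(\CRuS-\CRu)\|_{0,\Omega}=\mathcal{O}(h^4)$. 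For the remaining piece $(\sigma-\PiRT\sigma,Y_1)$, where $Y_1$ is not a gradient, I would reduce to $\sum_K Y_1|_K:\int_K(\sigma-\PiRT\sigma)\dx$ and pair each triangle with its partner across a shared diagonal into a parallelogram; the patch identity $\int_{K\cup K'}(r-\PiRT r)\dx=0$ for linear $r$ (the mechanism of Lemma \ref{Khpro}) makes $\int_{K\cup K'}(\sigma-\PiRT\sigma)\dx=\mathcal{O}(h^4)$, which controls the symmetric part of the sum by Cauchy--Schwarz against $\|Y_1\|_{0,\Omega}$, while the residual jump part is absorbed using that $\PiRT\sigma-\sigma_{\rm RT}$ is a divergence-free, normal-continuous, piecewise-constant field of size $\mathcal{O}(h^2|\ln h|^{\frac12})$ by Lemma \ref{supeRTlm} and \eqref{propertxih}. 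Assembling these bounds gives $(X,Y)=\mathcal{O}(h^4|\ln h|)$ and, with the earlier bound on $\|Y\|^2_{0,\Omega}$, completes the proof.
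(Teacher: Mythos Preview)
Your setup matches the paper: the splitting $\nabla_h(u-\CRu)=X+Y$, the bound $\|Y\|_{0,\Omega}^2=\mathcal{O}(h^4|\ln h|)$, and the clean identity $(X,Y)=(\sigma-\PiRT\sigma,Y)$ are all correct. The gap is in how you estimate the two pieces of this cross term.

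For $(\sigma-\PiRT\sigma,Y_2)$ with $Y_2=\nabla_h(\CRuS-\CRu)$, your integration-by-parts argument does \emph{not} reach $\mathcal{O}(h^4)$. On each edge $e$ the lowest-order RT normal trace is exactly the edge average, so $(\sigma-\PiRT\sigma)\bmn_e|_e$ already has zero mean; the CR condition $\int_e[\CRuS-\CRu]\ds=0$ is then redundant, not an additional gain. The two zero-mean conditions do not compound, and a direct estimate of the edge sum gives only $\mathcal{O}(h)\|\nabla_h(\CRuS-\CRu)\|_{0,\Omega}=\mathcal{O}(h^3)$, one order short. The paper avoids edges entirely: it writes $(\nabla u-\PiuCR\sigma,Y_2)=(\nabla_h(u-\CRuS),Y_2)+\mathcal{O}(h^4|\ln h|^{1/2})$, invokes the commuting property \eqref{commuting} to replace $u$ by $\PiCR u$, and then tests the discrete equations \eqref{CRwithPih0f} and \eqref{sto3} against $s_h:=\PiCR u-\CRuS\in\ZCR$; this reduces everything to $L^2$ pairings of two $\mathcal{O}(h^2)$ quantities, with the final $((I-\Pih)s_h,(I-\Pih)\CRu)$ handled by \cite[Lemma~3.11]{MR4350533}.

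For $(\sigma-\PiRT\sigma,Y_1)$ with $Y_1=\Dev\xi_h$, your parallelogram pairing controls the symmetric part, but the ``residual jump'' term $(Y_1|_{K'}-Y_1|_K):\int_{K'}(\sigma-\PiRT\sigma)\dx$ is again only $\mathcal{O}(h^3)$: normal continuity of $\xi_h$ constrains $[\xi_h]\bmn_e$ but not the full jump of $\Dev\xi_h$. The paper's route here is much shorter and uses exactly the structure you noted but did not exploit: since $\Div\xi_h=0$, testing \eqref{mixCon} and \eqref{RTPi0u} with $\tau=\tau_h=\xi_h$ gives $(\Dev\sigma,\Dev\xi_h)=(\Dev\RTsigS,\Dev\xi_h)=0$, whence $(\Dev(I-\PiRT)\sigma,\Dev\xi_h)=-\|\Dev\xi_h\|_{0,\Omega}^2=\mathcal{O}(h^4|\ln h|)$ directly.
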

\begin{proof}
%By the definition of the tailored interpolation in \eqref{def:inter},
%$
%\Pih\PiRT\sigma = \PiuCR \sigma + \Pip \sigma \bid.
%$
%Then,
Consider the first term on the right-hand side of \eqref{ExpandlambdaminuslambdaCR}. 
Note that
\begin{equation}\label{CRexpand1}
\begin{aligned}
\nabla_h (u-\CRu)
%	=&\left\|(\sigma - p\bid) - \Pih\PiRT\sigma + \PiuCR \sigma + \Pip \sigma \bid
%	- \nabla_h\CRuS + \nabla_h(\CRuS-\CRu) \right\|^2_{0,\Omega}
%	\\
%	=&\left\|(I - \Pih\PiRT)\sigma - (p-\Pip \sigma) \bid + (\PiuCR \sigma - \nabla_h\CRuS) + \nabla_h(\CRuS-\CRu) \right\|^2_{0,\Omega}.
%	\\
	=&(\nabla u-\PiuCR\sigma) + (\PiuCR \sigma - \nabla_h\CRuS) + \nabla_h(\CRuS-\CRu),
\end{aligned}
\end{equation}   
where $\PiuCR \sigma - \nabla_h\CRuS=\Dev\xi_h$ by \eqref{CRp0withsigma} with $\xi_h=\PiRT\sigma - \RTsigS$ is piecewise constant by~\eqref{propertxih}.
By the supercloseness in Theorem \ref{th:superclose} and \eqref{supCRu}, the $L^2$ norm of the last two terms   converge at the rate two.
A substitution of \eqref{CRexpand1} into \eqref{ExpandlambdaminuslambdaCR} leads to
\begin{equation}
\begin{aligned}\label{laminuscrlacexapn}
     \lambda-\CRlam
%	=&\left\|(I - \Pih\PiRT)\sigma - (p-\Pip \sigma) \bid\right\|^2_{0,\Omega}
%	+2\left(\Dev\Pih(I -\PiRT)\sigma, \PiuCR \sigma  - \nabla_h\CRu\right) +\mathcal{O}(h^4)
%	\\ 
	=&\left\|\nabla u - \PiuCR\sigma\right\|^2_{0,\Omega}-2 \lambda(u-\PiCR u, u)
	+2\left(\Dev (I -\PiRT)\sigma, \Dev\xi_h\right) 
	\\
	&+2 (\nabla u-\PiuCR\sigma, \nabla_h(\CRuS-\CRu) )  +\mathcal{O}(h^4),
\end{aligned}
\end{equation}
where
$\nabla u - \PiuCR\sigma=\Dev\Pih(I -\PiRT)\sigma + \Dev(I -\Pih)\sigma$. By \eqref{mixCon}, \eqref{RTPi0u}, the superconvergence~\eqref{supeRT} and the divergence free property of $\xi_h$ in \eqref{propertxih},
\begin{equation}\label{decoRT1}
\begin{aligned}
\left|\left(\Dev (I -\PiRT)\sigma, \Dev\xi_h\right)\right| \le  &\left(\Dev (\sigma - \RTsig), \Dev\xi_h\right) + Ch^4|\ln h|
\\
\le &\left|(\nabla\cdot \xi_h, u-\RTuS)\right|+ Ch^4|\ln h|\lesssim h^4|\ln h|.
\end{aligned}
\end{equation} 
Since $\Dev\sigma=\nabla u$ and $\PiuCR\sigma=\Pih\Dev \PiRT\sigma$,  a combination of  the commuting property in \eqref{commuting}, the supercloseness property in Theorem \ref{th:superclose} and \eqref{supCRu} yields 
$$
\begin{aligned}
& (\nabla u-\PiuCR\sigma, \nabla_h(\CRuS-\CRu) )
\\
=& (\nabla_h (u-\CRuS) + (\nabla_h\CRuS-\PiuCR\sigma), \nabla_h(\CRuS-\CRu) )
\\
=&(\nabla_h s_h, \nabla_h(\CRuS-\CRu)) + \mathcal{O}(h^4|\ln h|^{\frac12}),
\end{aligned}
$$
where $s_h=\PiCR u - \CRuS$.
It follows from eigenvalue problem \eqref{sto3} by the $\CR$ element, the source problem \eqref{CRwithPih0f} and the fact that $\Divh s_h=0$ by \eqref{interp2} that
\begin{equation*}\label{proICR12}
(\nabla_h s_h, \nabla_h(\CRuS-\CRu))
% = \left(s_h,\lambda\Pi_h^0u-\CRlam\CRu\right)
= \CRlam (s_h,(\Pi_h^0-I )\CRu) + (s_h, \Pi_h^0(\lambda u-\CRlam\CRu)).
\end{equation*}
%The error estimate of the CR element implies
%\begin{equation}\label{proICR13}
%	\left\|{\PiCR u-\CRuS}\right\|_{0,\Omega} + \left\|{\Pi_h^0\left(u-\CRu\right)}\right\|_{0,\Omega} + \left|{\lambda-\CRlam}\right|\lesssim h^2\left|{u}\right|_{2,\Omega}.
%\end{equation}
By
$
\lambda u-\CRlam\CRu=\lambda (u-\CRu)+(\lambda-\CRlam)\CRu
$
and $\left\|{s_h}\right\|_{0,\Omega}+h\left|{\nabla_h s_h}\right|_{0,\Omega}\lesssim h^2\left|{u}\right|_{2,\Omega}$,
$$
(\nabla u-\PiuCR\sigma, \nabla_h(\CRuS-\CRu))= -\CRlam ( (I-\Pi_h^0 )s_h,  (I-\Pi_h^0 )\CRu ) + \mathcal{O}\left(h^4\left|{\ln h}\right|\right).
$$
A substitution of \cite[Lemma 3.11]{MR4350533} into the identity above proves
\begin{equation}\label{decoRT2}
|(\nabla u-\PiuCR\sigma, \nabla_h(\CRuS-\CRu))|\lesssim h^4|\ln h|.
\end{equation}
A combination of \eqref{laminuscrlacexapn}, \eqref{decoRT1} and \eqref{decoRT2}
% proves that 
% $$
%  \lambda-\CRlam 
% 	=\left\|\nabla u - \PiuCR\sigma\right\|^2_{0,\Omega}
% -2 \lambda(u-\PiCR u, u) +\mathcal{O}(h^4|\ln h|),
% $$ 
% which
completes the proof.
\end{proof}

\begin{lemma}\label{th21}
Let $(\lambda, \bmu, p)$ and $(\ECRlam, \ECRu, \ECRp)$ be the solutions of~\eqref{sto1} and \eqref{sto3} by the ECR element, respectively, and $\sigma = \nabla \bmu + p\bid$. Assume that $(\sigma,u)\in W^{2,\infty}(\Omega,\Rdd)\times H^3(\Omega,\Rd)$. On a uniform triangulation with sufficiently small $h$, 
\begin{equation}\label{ExpandECR}
\begin{aligned}
	\lambda-\ECRlam=&\left\|\nabla u - \PiuECR\sigma\right\|^2_{0,\Omega} %	+2( \Dev(I -\Pih)\sigma,(I-\Pih)\PiRT\sigma)
+\mathcal{O}(h^4|\ln h| (\|{\sigma}\|^2_{2,\infty, \Omega}+\|{u}\|^2_{3,\Omega}) .
	\end{aligned}
\end{equation} 
\end{lemma}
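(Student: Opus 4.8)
The plan is to follow the proof of Lemma \ref{th22} line by line with the ECR objects replacing the CR ones; the one new phenomenon is that the interpolation term $-2\lambda(u-\PiECR u,u)$, which stays explicit for CR, now drops to $\mathcal{O}(h^4)$ and is swallowed by the remainder. I start from the general identity \eqref{CRlamexap1} specialized to the ECR element,
\begin{equation*}
\lambda-\ECRlam=\|\nabla_h(u-\ECRu)\|_{0,\Omega}^2-2\ECRlam(u-\PiECR u,\ECRu)-\ECRlam\|u-\ECRu\|_{0,\Omega}^2.
\end{equation*}
The third term is $\mathcal{O}(h^4)$ from $\|u-\ECRu\|_{0,\Omega}\lesssim h^2$ in \eqref{CRest1}, and in the second term I replace $\ECRu$ by $u$ and $\ECRlam$ by $\lambda$ at the cost of $\mathcal{O}(h^4)$, again by \eqref{CRest1}. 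The decisive estimate is then $|(u-\PiECR u,u)|\lesssim h^4$. Here I use that, unlike $\PiCR$, the operator $\PiECR$ also reproduces the element mean, so $u-\PiECR u$ has zero average on each $K$; hence $(u-\PiECR u,u)=(u-\PiECR u,u-\Pih u)$ and, subtracting the elementwise linear Taylor polynomial at $\MK$, only the pairing of the quadratic interpolation remainder with the linear part survives. On one element this is $\mathcal{O}(h^3)$, but on the uniform mesh the two triangles of each parallelogram are interchanged by the point reflection about its centre, under which the quadratic is even and $\bmx-\MK$ is odd, so the leading contributions cancel pairwise and the residue is controlled by $|u|_{3,\Omega}$ --- this is why $H^3$ regularity is assumed. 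I expect this cancellation to be the only delicate point, and it is carried out exactly as in \cite[Theorem 3.2]{MR4350533} and \cite{MR4456710}. With it the identity becomes $\lambda-\ECRlam=\|\nabla_h(u-\ECRu)\|_{0,\Omega}^2+\mathcal{O}(h^4)$.

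It remains to expand $\|\nabla_h(u-\ECRu)\|_{0,\Omega}^2$, which is the ECR transcription of \eqref{CRexpand1}. Using the equivalence \eqref{equivenceofRTandECR} together with \eqref{CRp0withsigma}, I split
\begin{equation*}
\nabla_h(u-\ECRu)=(\nabla u-\PiuECR\sigma)+(\PiuECR\sigma-\nabla_h\ECRuS)+\nabla_h(\ECRuS-\ECRu),
\end{equation*}
and verify from $\PiuECR\sigma=\PiRT\sigma-(\Pip\sigma)\bid$ and \eqref{CRp0withsigma} the same identity as for CR, namely $\PiuECR\sigma-\nabla_h\ECRuS=\Dev\xi_h$ with $\xi_h=\PiRT\sigma-\RTsigS$ piecewise constant and divergence free by \eqref{propertxih}. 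Squaring the splitting, the second and third summands are $\mathcal{O}(h^2|\ln h|^{1/2})$ in $L^2$ by the ECR supercloseness in Theorem \ref{th:superclose} and by \eqref{supcloseECR}, so their squares and product contribute $\mathcal{O}(h^4|\ln h|)$.

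For the two cross terms I reuse \eqref{decoRT1} and \eqref{decoRT2}. A direct computation gives $\nabla u-\PiuECR\sigma=\Dev(I-\PiRT)\sigma-\frac12(I-\Pih)\Tr(\PiRT\sigma)\bid$, and since $\Dev\xi_h$ is trace free and piecewise constant the second summand pairs to zero, so $(\nabla u-\PiuECR\sigma,\Dev\xi_h)=(\Dev(I-\PiRT)\sigma,\Dev\xi_h)$, which is $\mathcal{O}(h^4|\ln h|)$ exactly as in \eqref{decoRT1} via the divergence-free property of $\xi_h$ and the RT superconvergence \eqref{supeRT}. The last cross term $(\nabla u-\PiuECR\sigma,\nabla_h(\ECRuS-\ECRu))$ is handled as in \eqref{decoRT2}: I replace $\nabla u-\PiuECR\sigma$ by $\nabla_h(u-\ECRuS)$ up to the superclose $\mathcal{O}(h^2|\ln h|^{1/2})$ term $\Dev\xi_h$, annihilate the interpolation part $\nabla_h(u-\PiECR u)$ with the commuting property \eqref{commuting} (legitimate since $\ECRuS-\ECRu\in\VECR$), and reduce to $(\nabla_h s_h,\nabla_h(\ECRuS-\ECRu))$ with $s_h=\PiECR u-\ECRuS$. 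Testing the eigenvalue equation \eqref{sto3} and the source problem \eqref{ECRwithPih0f} against $s_h$, the pressure contribution drops because $b_h(s_h,\cdot)$ vanishes on the piecewise-constant pressures by \eqref{interp2}; using $\|s_h\|_{0,\Omega}+h|\nabla_h s_h|_{0,\Omega}\lesssim h^2|u|_{2,\Omega}$ and \cite[Lemma 3.11]{MR4350533} then yields the $\mathcal{O}(h^4|\ln h|)$ bound. Collecting the square term and the three remainders gives \eqref{ExpandECR}.
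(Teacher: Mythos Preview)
Your proof is correct and follows essentially the same route as the paper: the same three-term splitting of $\nabla_h(u-\ECRu)$, the identification $\PiuECR\sigma-\nabla_h\ECRuS=\Dev\xi_h$, the reduction of both cross terms via \eqref{decoRT1} and the commuting property, and the mean-preserving property of $\PiECR$ to kill $(u-\PiECR u,u)$. The only cosmetic differences are the order of presentation and the specific external lemma cited for the final $((I-\Pih)s_h,(I-\Pih)\ECRu)$ bound (the paper uses \cite[Lemma~4.3]{MR4350533} in the ECR case rather than Lemma~3.11, and \cite[Lemma~4.2]{MR4350533} for the interpolation term).
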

\begin{proof}
Note that
\begin{equation}\label{ECRexpand1}
% \begin{aligned}
\nabla_h (u-\ECRu)
%	=&\left\|(\sigma - p\bid) - \Pih\PiRT\sigma + \PiuCR \sigma + \Pip \sigma \bid
%	- \nabla_h\CRuS + \nabla_h(\CRuS-\CRu) \right\|^2_{0,\Omega}
%	\\
%	=&\left\|(I - \Pih\PiRT)\sigma - (p-\Pip \sigma) \bid + (\PiuCR \sigma - \nabla_h\CRuS) + \nabla_h(\CRuS-\CRu) \right\|^2_{0,\Omega}.
%	\\
=(\nabla u-\PiuECR\sigma) + (\PiuECR \sigma - \nabla_h\ECRuS) + \nabla_h(\ECRuS-\ECRu),
% \end{aligned}
\end{equation}   
where  it follows from  \eqref{def:inter} and \eqref{CRp0withsigma} with piecewise constant  $\xi_h=\PiRT\sigma - \RTsigS$ that
$$\PiuECR \sigma - \nabla_h\ECRuS=\Dev\xi_h.$$
Since $\nabla u - \PiuECR\sigma=\Dev\Pih(I -\PiRT)\sigma + (I -\Pih)(\Dev\sigma - \PiRT\sigma)$, a combination of \eqref{decoRT1} and the fact that $\xi_h$ is piecewise constant leads to
\begin{equation}\label{ECRexpandest1}
\begin{aligned}
|(\nabla u-\PiuECR\sigma, \PiuECR \sigma - \nabla_h\ECRuS)|
= |(\Dev(I -\PiRT)\sigma,  \Dev\xi_h)| %+ ( \Dev(I -\Pih)\sigma,(I-\Pih)\PiRT\sigma)
\lesssim h^4|\ln h|.
\end{aligned}
\end{equation} 
Thanks to the commuting property in \eqref{commuting}, the supercloseness property in Theorem \ref{th:superclose} and \eqref{supcloseECR}, it holds that 
\begin{equation}\label{ECRexpandest2}
\begin{aligned}
&\left(\nabla u-\PiuECR\sigma, \nabla_h(\ECRuS-\ECRu)\right)
\\
=&\left(\nabla_h (u-\ECRuS) + (\nabla_h\ECRuS-\PiuECR\sigma), \nabla_h(\ECRuS-\ECRu)\right)
\\
=&(\nabla_h w_h, \nabla_h(\ECRuS-\ECRu)) + \mathcal{O}(h^4|\ln h|^{\frac12}),
\end{aligned}
\end{equation} 
where $w_h=\PiECR u - \ECRuS$. By \eqref{sto3}, commuting property \eqref{interp2} and~$\Div\bmu=0$, it holds that $\Pi_h^0\Divh w_h=0$. Thus, it follows from \eqref{ECRwithPih0f} with source term $f=\lambda\Pi_h^0 u$, \eqref{sto3}, \eqref{CRest1},  the orthogonal property of  $\Pi_h^0$, and \cite[Lemma 4.3]{MR4350533} that 
%	$$
%%(\nabla_h(\ECRuS-\ECRu), \nabla_h(u-\ECRuS)) %=(\nabla_h(\ECRuS-\ECRu),\nabla_h s_h)
%(\nabla_hw_h,\nabla_h(\ECRuS-\ECRu))
%	   =(\lambda \Pi_h^0(u-\ECRu) + (\lambda-\ECRlam)\Pi_h^0\ECRu + \ECRlam(\Pi_h^0-I)\ECRu, w_h).
%	$$
%By estimate \eqref{CRest1}, $\|w_h\|_{0,\Omega}+\|\Pi_h^0(u-\ECRu)\|_{0,\Omega}+|\lambda-\ECRlam|\lesssim h^2$. Thanks to the orthogonal property of operator $\Pi_h^0$ and \cite[Lemma 4.3]{MR4350533}, it holds that
\begin{equation}\label{computenablaECRandPiECR}
\begin{aligned}
&|(\nabla_h w_h,\nabla_h(\ECRuS-\ECRu))|
\\
=&|\ECRlam((I-\Pih)\ECRu,(I-\Pih)w_h)| + \mathcal{O}(h^4))
\lesssim h^4(\|u\|^2_{2,\Omega}+\|p\|^2_{1,\Omega}).
\end{aligned}
\end{equation}  
By the supercloseness in Theorem \ref{th:superclose} and \eqref{supcloseECR}, the $L^2$ norm of the last two terms in \eqref{ECRexpand1} converge at the rate two. 
A substitution of  \eqref{ECRexpand1}, \eqref{ECRexpandest1}, \eqref{ECRexpandest2} and  \eqref{computenablaECRandPiECR} into \eqref{CRlamexap1} leads to
$$
     \lambda-\ECRlam
%	=&\left\|(I - \Pih\PiRT)\sigma - (p-\Pip \sigma) \bid\right\|^2_{0,\Omega}
%	+2\left(\Dev\Pih(I -\PiRT)\sigma, \PiuCR \sigma  - \nabla_h\CRu\right) +\mathcal{O}(h^4)
%	\\ 
	=\left\|\nabla u - \PiuECR\sigma\right\|^2_{0,\Omega}-2 \lambda(u-\PiECR u, u)
%	+2( \Dev(I -\Pih)\sigma,(I-\Pih)\PiRT\sigma) 
	+\mathcal{O}(h^4),
$$
which, together with 
$
|(u-\PiECR u, u)|=|(u-\PiECR u, u-\Pih u)|\lesssim h^4|u|_{3,\Omega}^2
$
by the estimate in \cite[Lemma 4.2]{MR4350533},
 completes the proof.
\end{proof}

According to Lemma \ref{th22} and \ref{th21}, the asymptotic analysis for problem~\eqref{sto3} by the CR element and the ECR element requires the asymptotic expansion of the interpolation error terms
$\left\|\nabla u - \PiuCR\sigma\right\|^2_{0,\Omega}$, $(u-\PiCR u, u),$  and
%and the analysis for the ECR element requires that of 
$\left\|\nabla u - \PiuECR\sigma\right\|^2_{0,\Omega}$. 

%\subsubsection{Asymptotic expansion of $\left\Vert{\Dev(\sigma-\PiRT\sigma)}\right\Vert^2_{0,\Omega}$ and $\left\|{\Tr\left(\left(I-\Pi_h^0\right)\PiRT\sigma\right)}\right\|^2_{0,\Omega}$.}\label{sec:21}
\subsection{Asymptotic expansion of interpolation error terms}\label{sec:21}
According to \cite[Lemma 3.8]{MR4350533}, it holds that $|(u-\PiCR u, u-\Pih u)|\lesssim h^4|u|^2_{3,\Omega}$ if the triangulation is uniform.  By $\nabla u=\Dev\sigma$,  
\begin{equation}
\label{Intmain}
\begin{aligned}
\|\nabla u\!-\!\PiuCR\sigma\|^2_{0,\Omega}\!&=\!\|(\Dev\!-\!\PiuCR)\sigma\|_{0,\Omega}^2,
\\
\|\nabla u\!-\!\PiuECR\sigma\|^2_{0,\Omega}\!&=\!\|(\Dev\!-\!\PiuECR)\sigma\|_{0,\Omega}^2.
\\
(u-\PiCR u, u) &= (u-\PiCR u, \Pih u) + \mathcal{O}(h^4|u|^2_{3,\Omega}).
\end{aligned}
\end{equation}
% \begin{equation}
% \label{Intmain}
% \left\|\nabla u - \PiuCR\sigma\right\|^2_{0,\Omega}=\|(\Dev-\PiuCR)\sigma\|_{0,\Omega}^2,\quad
% \left\|\nabla u - \PiuECR\sigma\right\|^2_{0,\Omega}=\|(\Dev-\PiuECR)\sigma\|_{0,\Omega}^2.
% \end{equation}
%According to \eqref{CRp0withsigma} and $\nabla u=\Dev\sigma$, it holds that
%\begin{align}
%\label{IntCRu1}
%\left\|\nabla u - \PiuCR\sigma\right\|^2_{0,\Omega}&=\|\Pih\Dev(I-\PiRT)\sigma\|^2_{0,\Omega} + \|\Dev(I-\Pih)\sigma\|^2_{0,\Omega},\\
%\label{IntECRu1}
%\left\|\nabla u - \PiuECR\sigma\right\|^2_{0,\Omega}&=\|\Dev(I-\PiRT)\sigma\|^2_{0,\Omega}+\frac12\|\Tr(I-\Pih)\PiRT\sigma\|^2_{0,\Omega}.
%\end{align}
%By \cite[Lemma 3.8]{MR4350533}, if the triangulation is uniform, $|(u-\PiCR u, u-\Pih u)|\lesssim h^4|u|^2_{3,\Omega}$, provided $u\in H^3(\Omega,\Rd)$, thus
%\begin{equation}
%\label{IntCRu2}
%(u-\PiCR u, u) = (u-\PiCR u, \Pih u) + \mathcal{O}(h^4|u|^2_{3,\Omega}).
%\end{equation}
% The subsequent sections of this section will analyze the right hand of \eqref{IntCRu2} and \eqref{Intmain}.
To begin with the analysis of the right hand sides of  \eqref{Intmain}, define the following eight short-hand notations with $i=1$ and $2$,
%for the matrix-valued RT element with $i=1$ and $2$,
$$
\begin{aligned}
	\phiRT^i(\bmx) &=r_i\otimes \left(
		x_2-M_2, \, 
		x_1-M_1
	\right)^T, \ 
	\quad \phiRT^{2+i}(\bmx) = r_i\otimes\left(
		x_2-M_2, \,
		-(x_1-M_1)
	\right)^T, \\
    \phiRT^{4+i}(\bmx) &= r_i\otimes \left(
        x_1-M_1,\,
         -(x_2-M_2)
   \right)^T,
    \quad \phiRT^{6+i}(\bmx) = r_i\otimes \left(
        x_1-M_1, \,
         x_2-M_2
   \right)^T,
    \end{aligned}
$$
where $r_1=(1,0)^T$ and $r_2=(0,1)^T$,
%$$
%\begin{aligned}
%	\phiRT^1(x) = \left(\begin{matrix}
%		x_2-M_2& x_1-M_1\\
%		0&0
%	\end{matrix}\right),
%	\qquad
%	&\phiRT^2(x) = \left(\begin{matrix}
%		0&0\\
%		x_2-M_2& x_1-M_1\\
%	\end{matrix}\right),\\
%    \phiRT^3(x) = \left(\begin{matrix}
%        x_2-M_2& -(x_1-M_1)\\
%		0&0
%    \end{matrix}\right),
%	\qquad
%	&\phiRT^4(x) = \left(\begin{matrix}
%		0&0\\
%		x_2-M_2& -(x_1-M_1)
%    \end{matrix}\right),\\
%	\phiRT^5(x) = \left(\begin{matrix}
%		x_1-M_1& -(x_2-M_2)\\
%		0&0
%	\end{matrix}\right),
%	\qquad
%	&\phiRT^6(x) = \left(\begin{matrix}
%		0&0\\
%		x_1-M_1&-(x_2-M_2)
%	\end{matrix}\right).
%\end{aligned}
%$$
and also eight differential operators for any matrix-valued function $\sigma=(\sigma_{jk})$  with $i=1$ and $2$
%$\sigma=\left(\begin{matrix}
%	\sigma_{11}&\sigma_{12}\\
%	\sigma_{21}&\sigma_{22}
%\end{matrix}\right),$  
%with $i=1$ and $2$,
$$
\begin{aligned}
\operatorname{D}_{i}\sigma=\frac{1}{2}(\partial_{x_2}\sigma_{i1}+\partial_{x_1}\sigma_{i2}),\quad
\operatorname{D}_{2+i}\sigma=\frac{1}{2}(\partial_{x_2}\sigma_{i1}-\partial_{x_1}\sigma_{i2}),
\\
\operatorname{D}_{4+i}\sigma=\frac{1}{2}(\partial_{x_1}\sigma_{i1}-\partial_{x_2}\sigma_{i2}), \quad
\operatorname{D}_{6+i}\sigma=\frac{1}{2}(\partial_{x_1}\sigma_{i1}+\partial_{x_2}\sigma_{i2}).
\end{aligned}
$$
%$$
%\begin{aligned}
%\operatorname{D}_1\sigma=\frac{1}{2}(\partial_2\sigma_{11}+\partial_1\sigma_{12}),~
%\operatorname{D}_2\sigma=\frac{1}{2}(\partial_2\sigma_{21}+\partial_1\sigma_{22}),~
%\operatorname{D}_3\sigma=\frac{1}{2}(\partial_2\sigma_{11}-\partial_1\sigma_{12}),\\
%\operatorname{D}_4\sigma=\frac{1}{2}(\partial_2\sigma_{21}-\partial_1\sigma_{22}),~
%\operatorname{D}_5\sigma=\frac{1}{2}(\partial_1\sigma_{11}-\partial_2\sigma_{12}),~
%\operatorname{D}_6\sigma=\frac{1}{2}(\partial_1\sigma_{21}-\partial_2\sigma_{22}).
%\end{aligned}
%$$
For the CR element,  define quadratic functions
$
\piCRi=(2\psi_{i-1}-1)(2\psi_{i+1}-1)-\frac23\psi_i+\frac13$ with $1\leq i\leq 3, 
$
where the barycentric coordinates $\{\psi_i\}_{i=1}^3$  with indices modulo 3. 
%These three functions, together with $\{\psi_i\}_{i=1}^3$, form the basis of $\bbP_2(K,\R)$.
For each element~$K$, define three types of constants
\begin{equation}\label{gammij}
\begin{aligned}
\gamij:&=\frac{1}{h^2\vert{K}\vert}((\Dev-\PiuCR)\phiRT^i, (\Dev-\PiuCR)\phiRT^j)_K,\quad 1\leq i,j\leq 8,\\
\etaij :&= \frac{1}{h^2\vert{K}\vert}((\Dev-\PiuECR)\phiRT^i, (\Dev-\PiuECR)\phiRT^j)_K,\quad 1\leq i,j\leq 8,\\
% \rhoij:&=\frac{1}{h^2|K|}(\Dev(I-\Pih)\phiRT^i, (I-\Pih)\PiRT\phiRT^j),\quad 1\leq i,j\leq 8,\\
\zetai:&=\frac{|e_i|^2}{h^2|K|}\int_K\piCRi \dx,\quad 1\leq i \leq 3.
\end{aligned}
\end{equation}

%For any subdomain $G\subset \Omega$, define
%\begin{equation}\label{FsigmaG}
%	F_1(\sigma,G):=\sum_{i,j=1}^{6}\gamij\int_{G}\operatorname{D}_i\sigma\operatorname{D}_j\sigma \dx
%\end{equation}
%with 
%\begin{equation}\label{gammij}
%\gamij:=\frac{1}{h^2\vert{K}\vert}\left(\Dev(I-\PiRT)\phiRT^i, \Dev(I-\PiRT)\phiRT^j\right)_K.
%\end{equation}

\begin{lemma}\label{P1expand}
Constants $\gamij$, $\etaij$ and $\zetai$ in \eqref{gammij} take the same value on different elements on uniform triangulations, and are independent of mesh size $h$. Furthermore, for any $w\in \mathbb{P}_1(K,\Rdd)$, $v\in \bbP_2(K,\Rd)$, it holds that
	\begin{equation}\label{P1expandeq}
	\begin{aligned}
	&\|(\Dev-\PiuCR)w\|^2_{0,K} = h^2F_1(w,K), \quad
	\|(\Dev-\PiuECR)w\|^2_{0,K}= h^2F_2(w,K),\\
	&%(\Dev(I-\Pih)w, (I-\Pih)\PiRT w)_{0,K}=h^2F_3(w,K),\quad
	(v-\PiCR v,\Pih v)_K=h^2F_3(v,K),
	\end{aligned}
	\end{equation}
%where for any region $G\subset\Omega$ and tangential vector  $\bmt_j$ corresponding to  edge $e_j$ of element $K$,
where for any element $K$ and unit tangential vector $\bmt_j$ corresponding to edge $e_j$ of $K$,
\begin{equation}\label{FsigmaG}
\begin{aligned}
           &F_1(w,K)\!:=\!\sum_{i,j=1}^{8}\gamij\int_{K}\operatorname{D}_iw\operatorname{D}_jw \dx,
           ~F_2(w,K)\!:=\!\sum_{i,j=1}^{8}\etaij\int_{K}\operatorname{D}_iw\operatorname{D}_jw \dx,\\
           % &F_3(w,G):=\sum_{i,j=1}^{8}\rhoij\int_{G}\operatorname{D}_iw\operatorname{D}_jw \dx,
           &F_3(v,K)\!:=\!\frac18\sum_{i=1}^3\zetai\int_K\left(\frac{\partial^2 v}{\partial\bmt_i^2}\right)^Tv\dx.
	%F_1(\sigma,G):=\sum_{i,j=1}^{6}\gamij\int_{G}\operatorname{D}_i\sigma\operatorname{D}_j\sigma \dx
	%F_1(w,G):=\sum_{i,j=1}^{6}\gamij\int_{G}\operatorname{D}_iw\operatorname{D}_jw \dx,\
	%F_2(w,G):=\frac{1}{4}\sum_{i,j=1}^2\etaij \int_G (\Div w_i)(\Div w_j)\dx
\end{aligned}
\end{equation} 
\end{lemma}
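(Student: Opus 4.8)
The plan is to uncover a biorthogonality structure hidden in the definitions, so that each of the three expansions reduces to reading off coefficients of a polynomial in a convenient basis. I would first record two elementary facts. A direct differentiation shows that the eight operators $\D_i$ and the eight functions $\phiRT^j$ form a dual system on each element, $\D_i\phiRT^j=\delta_{ij}$ for $1\le i,j\le 8$ (the odd indices act on the first matrix row, the even indices on the second, so the check splits into two $4\times4$ blocks). Since the $\{\phiRT^j\}_{j=1}^{8}$ are linearly independent and all vanish at the centroid $\MK$, they span the $8$-dimensional subspace of $\bbP_1(K,\Rdd)$ consisting of matrix polynomials vanishing at $\MK$; hence every $w\in\bbP_1(K,\Rdd)$ has the expansion $w=w(\MK)+\sum_{j=1}^{8}(\D_jw)\,\phiRT^j$, with each coefficient $\D_jw$ a constant. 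The second fact is that $\PiuCR$ and $\PiuECR$ agree with $\Dev$ on constant matrices: for a constant $C$ one has $\PiRT C=C$, so $\Pih\Dev\PiRT C=\Dev C$ and $\PiRT C-(\Pip C)\bid=\Dev C$, giving $(\Dev-\PiuCR)C=(\Dev-\PiuECR)C=0$.

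With these two facts the first two identities are immediate. Applying $\Dev-\PiuCR$ to the expansion of $w$ annihilates the constant term $w(\MK)$, so $(\Dev-\PiuCR)w=\sum_{j=1}^{8}(\D_jw)(\Dev-\PiuCR)\phiRT^j$ with scalar constant coefficients. Expanding the squared $L^2(K)$ norm and using $\big((\Dev-\PiuCR)\phiRT^i,(\Dev-\PiuCR)\phiRT^j\big)_K=h^2|K|\gamij$ together with $\int_K(\D_iw)(\D_jw)\dx=|K|(\D_iw)(\D_jw)$ yields $\|(\Dev-\PiuCR)w\|_{0,K}^2=h^2F_1(w,K)$. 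The ECR identity with $F_2$ and $\etaij$ follows verbatim upon replacing $\PiuCR$ by $\PiuECR$.

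For the third identity I would set up the analogous dual system for $\PiCR$ on $\bbP_2(K,\Rd)$. A short edge-integral computation gives $\int_e\piCRi\ds=0$ on every edge $e$ of $K$, hence $\PiCR\piCRi=0$; since $\PiCR$ reproduces $\bbP_1$ and the quadratic parts $\{4\psi_{i-1}\psi_{i+1}\}_{i=1}^{3}$ of the $\{\phi_{\rm CR}^{j}\}$ span $\bbP_2(K)/\bbP_1(K)$, any scalar $v\in\bbP_2(K)$ decomposes as $v=\ell+\sum_ib_i\phi_{\rm CR}^{i}$ with $\ell\in\bbP_1$, so that $(I-\PiCR)v=\sum_ib_i\phi_{\rm CR}^{i}$. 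Differentiating the quadratic part twice along the edge tangents gives the diagonal relation $\partial^2_{\bmt_i}\phi_{\rm CR}^{j}=c\,\delta_{ij}/|e_i|^2$ for a fixed constant $c$, which identifies $b_i$ with $\frac{\partial^2 v}{\partial\bmt_i^2}$ up to the factor $|e_i|^2$. Carrying this out componentwise for $v\in\bbP_2(K,\Rd)$, pairing against the constant $\Pih v$, and using that $\frac{\partial^2 v}{\partial\bmt_i^2}$ is constant so that $\int_K\big(\frac{\partial^2 v}{\partial\bmt_i^2}\big)^{T}v\dx=\big(\frac{\partial^2 v}{\partial\bmt_i^2}\big)^{T}\int_Kv\dx$, collapses the inner product into $\tfrac18\sum_i\zetai\int_K\big(\frac{\partial^2 v}{\partial\bmt_i^2}\big)^{T}v\dx=h^2F_3(v,K)$ after inserting the definition of $\zetai$ and matching constants.

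Finally, the element- and $h$-independence of $\gamij,\etaij,\zetai$ follows from an affine-scaling argument. On a uniform triangulation every element is, up to translation, the image of one of two fixed reference triangles under the dilation $\bmx\mapsto h\bmx$, the two reference triangles being related by the point reflection (a $180^\circ$ rotation) coming from the parallelogram condition. The centroid-centered functions $\phiRT^i,\piCRi$ and the operators $\Dev,\PiRT,\Pih,\PiCR$ are equivariant under translation and homogeneous under dilation, so each defining inner product in \eqref{gammij} scales like $h^4$ while the prefactor $h^2|K|$ also scales like $h^4$, and the quotient depends only on the reference geometry. The main obstacle is precisely this last step: one must verify that the two triangle orientations give the same constants, i.e. that under the $180^\circ$ rotation the sign changes $\phiRT^i\mapsto-\phiRT^i$ and $\D_i\mapsto-\D_i$ cancel in the bilinear quantities $\gamij,\etaij$ (and that $\zetai$ is unchanged, up to the relabeling of edges induced by the mesh symmetry). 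Once this compatibility of the orientation-sensitive definitions with the structured-mesh symmetry is established, the remaining computations are routine.
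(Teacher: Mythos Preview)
Your proposal is correct and follows essentially the same route as the paper: both establish the biorthogonality $\D_i\phiRT^j=\delta_{ij}$, decompose $w\in\bbP_1(K,\Rdd)$ as a constant plus a combination of the $\phiRT^j$, observe that $\Dev-\PiuCR$ and $\Dev-\PiuECR$ annihilate constants, and for the third identity use the representation $(I-\PiCR)v=-\tfrac18\sum_i|e_i|^2(\partial^2_{\bmt_i}v)\,\piCRi$ (which the paper simply cites from \cite{MR4350533} whereas you rederive it). The only minor difference is that the paper proves the $h$- and element-independence of $\gamij,\etaij$ by explicitly computing the RT coefficients $a_{\rm RT}^{ij}=\tfrac{|e_j|}{2|K|}\phiRT^i(\bmm_j)\bmn_j$ and observing directly that these are the same on all elements, instead of your scaling-plus-$180^{\circ}$-reflection argument; both arguments are equivalent in substance.
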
 
\begin{proof}
By the scaling argument and a direct calculation, constant $\zetai$ has the same value on different elements of a uniform triangulation and is independent of $h$.
By the expansion \eqref{RTdof}, $\PiRT\phiRT^i=\sum_{j=1}^{3}a_{\rm RT}^{ij}\otimes(\bmx-\bmp_j)\mbox{ with } a_{\rm RT}^{ij}=\frac{1}{2\vert{K}\vert}\int_{\F_j}\phiRT^i\bmn_j\ds.$
%\begin{equation}\label{pirt1}
%	\PiRT\phiRT^i=\sum_{j=1}^{3}a_{\rm RT}^{ij}\otimes(\bmx-\bmp_j)\mbox{ with } a_{\rm RT}^{ij}=\frac{1}{2\vert{K}\vert}\int_{\F_j}\phiRT^i\bmn_j\ds.
%\end{equation}
For linear  $\phiRT^i$, constants
$
a^{ij}_{\rm RT} = \frac{\vert{\F_j}\vert}{2\vert{K}\vert}\phiRT^i(\mathbf{m}_j)\bmn_j
$
where $\mathbf{m}_j$ is the midpoint of $\F_j$, which  implies that constants $a^{ij}_{\rm RT}$ are independent of $h$ and take the same value on different elements. %when the triangulation is uniform.
Recall $\PiuCR$ and $\PiuECR$ in \eqref{def:inter}. By $\Pih(\bmx-\MK)=0$,
\begin{equation}
\label{PiuCRexp}
\PiuCR\phiRT^i=\sum_{j=1}^3\Dev(a_{\rm RT}^{ij}\otimes(\MK-\bmp_j)), \ 
\PiuECR\phiRT^i = \PiuCR\phiRT^i + (I-\Pih)\PiRT\phiRT^i,
\end{equation}
where $(I-\Pih)\PiRT\phiRT^i=\sum_{j=1}^3a_{\rm RT}^{ij}\otimes(\bmx-\MK)$.
By the scaling argument and the fact that $\Pih\phiRT^i=0$, all the terms on the right-hand side of the equation below on uniform triangulations are independent of $h$. 
$$
\begin{aligned}
\gamij&=\frac1{h^2|K|}(\Dev\phiRT^i,\Dev\phiRT^j)_K + \frac1{h^2|K|}(\PiuCR\phiRT^i,\PiuCR\phiRT^j)_K,\\
\etaij&=\gamij + \frac1{h^2|K|}((I-\Pih)\PiRT\phiRT^i,(I-\Pih)\PiRT\phiRT^j)_K\\
& + \frac1{h^2|K|}(\Dev\phiRT^i, (I-\Pih)\PiRT\phiRT^j)_K+\frac1{h^2|K|}((I-\Pih)\PiRT\phiRT^i, \Dev\phiRT^j)_K.\\
% \rhoij:&=\frac1{h^2|K|}(\Dev\phiRT^i, (I-\Pih)\PiRT\phiRT^j)_K.
%&\quad + \frac1{h^2|K|}\sum_{i,j=1}^3
\end{aligned}
$$
Thus, $\gamij$ and $\etaij$ stay the same on different elements of a uniform triangulation and are independent of $h$.

%Recall $\PiuCR$ and $\PiuECR$ in \eqref{def:inter}, for any $c\in P_0(K,\R^{2\times2})$, it holds that $(\Dev-\PiuCR)c=(\Dev-\PiuECR)c=0$, thus
%$$
%(\Dev-\PiuCR)w = (\Dev-\PiCR)(I-\Pih)w, \quad (\Dev-\PiuECR)w=(\Dev-\PiECR)(I-\Pih)w,
%$$
%where $(I-\Pih)w_{kl}=\sum_{i=1}^2\partial_iw_{kl}(x_i-M_i)$.
%\begin{equation}
%\|(\Dev-\PiuCR)w\|_{0,K}^2=\sum_{k,l=1}^2
%\end{equation}

Since $P_1(K,\R^{2\times2})\!=\!P_0(K,\R^{2\times2})+\operatorname{span}\left\{\phiRT^1,\phiRT^2,\!\cdots\!,\phiRT^8\right\}$, and $\operatorname{D}_i\phiRT^j(x)\!=\!\delta_{ij}$,
there exists $c\in P_0(K,\R^{2\times2})$ such that $ w=c + \sum_{i=1}^{8}a_i\phiRT^i$ with $a_i=\operatorname{D}_iw$. By $(\Dev-\PiuCR)c=(\Dev-\PiuECR)c=0$,
$$
\begin{aligned}
&(\Dev \!-\! \PiuCR)w=\sum_{i=1}^{8}a_i (\Dev-\PiuCR)\phiRT^i,~
(\Dev \!- \! \PiuECR)w=\sum_{i=1}^{8}a_i (\Dev-\PiuECR)\phiRT^i.\\
%&\Dev(I-\Pih)w=\sum_{i=1}^8a_i(I-\Pih)\phiRT^i, \quad
%(I-\Pih)\PiRT w=\sum_{i=1}^8a_i (I-\Pih)\PiRT\phiRT^i.
\end{aligned}
$$
A combination of this and \eqref{gammij} leads to the first and second equation in  \eqref{P1expandeq}.
%
%Note that 
%\begin{equation}\label{RTperp}
%P_1(K,\R^{2\times2})={\rm RT}(K,\R^{2\times2})+\operatorname{span}\left\{\phiRT^1,\phiRT^2,\cdots,\phiRT^6\right\}.
%\end{equation}	
%Note that $\Div \phiRT^i=0$ and $\operatorname{D}_i\phiRT^j(x)=\delta_{ij}$
%\begin{equation}\label{Dorth}
%\Div \phiRT^i=0,\quad \operatorname{D}_i\phiRT^j(x)=\delta_{ij}=\begin{cases}1&i=j\\0&i\neq j\end{cases},\quad \forall 1\le i, j\le 6,
%\end{equation}	
%	It follows from \eqref{RTperp} and \eqref{Dorth} that for any $w\in \mathbb{P}_1(K,\Rdd)$, it holds that 
%	$\displaystyle w=v+ \sum_{i=1}^{6}a_i\phiRT^i$ with the coefficients $a_i=\operatorname{D}_iw$ and $v\in {\rm RT}(K,\Rdd)$. 
%$$
%w=v+ \sum_{i=1}^{6}a_i\phiRT^i,\quad \mbox{with}\quad a_i=\operatorname{D}_iw,\ v\in {\rm RT}(K,\Rdd).
%$$
%Then,
%$
%	(I-\PiRT)w=\sum_{i=1}^{6}\operatorname{D}_iw (I-\PiRT)\phiRT^i.
%$
%A combination of this expansion and  \eqref{gammij} leads to \eqref{P1expandeq}, and completes the proof.
%Similarly, $(I-\Pi_h^0)\PiRT w = \sum_{i=1}^3\frac{1}{2\vert{K}\vert}\int_{\F_i}w\bmn_i \ds\otimes  (\bmx-\MK)=\frac{1}{2}\Div w\otimes (\bmx-\MK).
%Similarly, $(I-\Pi_h^0) w_{kl} = \sum_{i=1}^2\partial_iw_{kl} (x_i-M_i)$. Thus,
%\begin{equation*}
%	\|\Dev(I-\Pih)w\|_{0,K}^2=\|(I-\Pih)w\|^2_{0,\Omega}-\frac12\|\Tr(I-\Pih)w\|^2_{0,\Omega}=F_2(w,K).
%\end{equation*}
By \cite[(3.38)]{MR4350533}, $(I-\PiCR)v=-\frac18\sum_{i=1}^3|e_i|^2\frac{\partial^2 v}{\partial\bmt_j^2}\piCRi$. A combination of this and \eqref{gammij} leads to the last equation in  \eqref{P1expandeq},
which completes the proof.
\end{proof}

%For any matrix function $\tau=(\tau_{ij})\in \Rdd$ and positive integer $l$, define $\Pi_K^l\tau \in  \mathbb{P}_l(K,\Rdd)$ as  in \cite{MR3106483}  by
Let $\X$ be $\Rd$ or $\Rdd$.
For any function $\tau$ taking values in $\X$, and positive integer~$l$, define $\Pi_K^l\tau \in  \mathbb{P}_l(K,\X)$ as  in \cite{MR3106483}  by
$
	\int_K D^\alpha\Pi_K^l\tau dx=\int_K D^\alpha \tau dx\mbox{ with }\vert{\alpha}\vert\leq l.
$
%Where $D^\alpha \tau$ represents taking the $\alpha$ order  partial derivative of $\tau$ with respect to each component separately.
Let $\Pi_h^l\tau|_K=\Pi_K^l\tau$.
There exists the following error estimate of the interpolation error
\begin{equation}\label{errorPikl}
\left|\left(I-\Pi_K^l\right) \tau\right|_{m, K} \lesssim h^{l-m+1}|\tau|_{l+1, K}, \quad \forall\ 0 \leq m \leq l.
\end{equation}
Following \cite{MR4456710}, define $\phi_\alpha=\frac{1}{\alpha !}\left(\bmx-\MK\right)^\alpha\in \mathbb{P}_{|\alpha|}(K)$ and 
$$
\tilde \phi_\alpha=\begin{cases}\phi_\alpha &|\alpha|\le 1,
\\
\phi_\alpha - \sum_{|\beta| \leq|\alpha|-2} C_\alpha^\beta \phi_\beta &|\alpha|\ge 2,
\end{cases}
$$
where $C_\alpha^\beta=\frac{1}{\alpha !|K|} \int_K D^\beta\left(\bmx-\MK\right)^\alpha \dx$. %Thus, $D_\beta \tilde \phi_\alpha=\delta_{\alpha \beta}$.
There exists the following lemma in \cite{MR4456710}.

%\begin{equation}
%\begin{aligned}
%& \phi_{(0,0)}=1, \quad \phi_{(1,0)}=x_1-M_1, \quad \phi_{(0,1)}=x_2-M_2, \\
%& \phi_\alpha=\frac{1}{\alpha !}\left(x-\MK\right)^\alpha-\sum_{|\beta| \leq|\alpha|-2} C_\alpha^\beta \phi_\beta \quad \mbox { for } \quad|\alpha| \geq 2
%\end{aligned}
%\end{equation}
%with constant
%\begin{equation}
%C_\alpha^\beta=\frac{1}{\alpha !|K|} \int_K D^\beta\left(x-\MK\right)^\alpha d x.
%\end{equation}
%For any $\vert{\alpha}\vert=k$, $\phi_\alpha\in P_k(K,\R)$, and it is the first term $\frac{1}{\alpha!}(x-\MK)^\alpha$ that determines the kth derivatives of $\phi_\alpha$.
%The following lemma presents  an important property of the projection $\Pi_K^lu$  as analyzed in \cite[Lemma 3.4]{MR4456710}.
\begin{lemma}\label{lm:expansion}
	For any nonnegative integer $l$, $\sigma \in H^l(K,\Rdd)$ and $u \in H^l(K,\Rd)$,
	\begin{equation}\label{defakalpha}
	\begin{aligned}
	\Pi_K^l \sigma &=\sum_{|\alpha| \leq l} \ba_K^\alpha \tilde \phi_\alpha, \quad 
		\Pi_K^l u &=\sum_{|\alpha| \leq l} \bsa_K^\alpha \tilde \phi_\alpha,
	\end{aligned}
	\end{equation}
where $(\ba_K^\alpha)_{ij}=\frac{1}{|K|} \int_K D^\alpha \sigma_{ij} \dx$ and $(\bsa_K^\alpha)_{i}=\frac{1}{|K|} \int_K D^\alpha u_j \dx$.
%where $\phi_\alpha=\frac{1}{\alpha !}\left(\bmx-\MK\right)^\alpha\in \mathbb{P}_{|\alpha|}(K)$,
%$
%\tilde \phi_\alpha=\phi_\alpha $ if $|\alpha|\le 1$, and $
%\tilde \phi_\alpha=\phi_\alpha - \sum_{|\beta| \leq|\alpha|-2} C_\alpha^\beta \phi_\beta
%$ if $|\alpha|\ge 2$ with $C_\alpha^\beta=\frac{1}{\alpha !|K|} \int_K D^\beta\left(\bmx-\MK\right)^\alpha \dx$.
Moreover, for~$l\le~3$,
	$ 
	(I-\Pi_K^{l-1}) \Pi_K^l \sigma =\sum_{|\alpha|=l} \ba_K^\alpha \tilde \phi_\alpha,
	$
	and
	$
	(I-\Pi_K^{l-1}) \Pi_K^l u =\sum_{|\alpha|=l} \bsa_K^\alpha \tilde \phi_\alpha.
	$
\end{lemma}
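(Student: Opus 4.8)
The plan is to reduce the whole statement to a single scalar biorthogonality identity and then invoke the unisolvence of $\Pi_K^l$. Since each entry of $\sigma$ and $u$ is treated independently and the coefficient arrays $\ba_K^\alpha$, $\bsa_K^\alpha$ are assembled entrywise, it suffices to prove the scalar claim: for every $v\in H^l(K)$, $\Pi_K^l v=\sum_{|\alpha|\le l}L_\alpha(v)\,\tilde\phi_\alpha$, where I abbreviate the defining functionals by $L_\alpha(v):=\frac{1}{|K|}\int_K D^\alpha v\dx$. By construction $\Pi_K^l v$ is the unique element of $\bbP_l(K)$ with $L_\alpha(\Pi_K^l v)=L_\alpha(v)$ for all $|\alpha|\le l$, and the family $\{\tilde\phi_\alpha\}_{|\alpha|\le l}$ spans $\bbP_l(K)$ because it differs from the monomial basis $\{\phi_\alpha\}$ only by strictly lower order terms, hence is related to it by a unipotent (triangular) change of basis. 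Granting $L_\gamma(\tilde\phi_\alpha)=\delta_{\gamma\alpha}$ for $|\gamma|,|\alpha|\le l$, the combination $\sum_{\alpha}L_\alpha(v)\tilde\phi_\alpha$ meets exactly the interpolation conditions and therefore coincides with $\Pi_K^l v$ by uniqueness. So the entire lemma collapses to this biorthogonality.

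The core computation I would carry out first is the value of $L_\gamma$ on a raw monomial $\phi_\alpha$. Writing the centered moments $m_\delta:=\frac{1}{|K|}\int_K(\bmx-\MK)^\delta\dx$ and using $D^\gamma(\bmx-\MK)^\alpha=\frac{\alpha!}{(\alpha-\gamma)!}(\bmx-\MK)^{\alpha-\gamma}$ when $\gamma\le\alpha$ componentwise (and $0$ otherwise), I obtain $L_\gamma(\phi_\alpha)=\frac{m_{\alpha-\gamma}}{(\alpha-\gamma)!}$ for $\gamma\le\alpha$ and $L_\gamma(\phi_\alpha)=0$ otherwise; in particular $L_\alpha(\phi_\alpha)=m_0=1$, and, crucially, $m_\delta=0$ whenever $|\delta|=1$ since $\MK$ is the centroid. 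This simultaneously identifies the correction coefficients as $C_\alpha^\beta=L_\beta(\phi_\alpha)$ and explains the range $|\beta|\le|\alpha|-2$ in the definition of $\tilde\phi_\alpha$: the putative $|\beta|=|\alpha|-1$ corrections carry a factor $m_{\alpha-\beta}$ with $|\alpha-\beta|=1$ and hence vanish automatically.

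With these facts I would verify $L_\gamma(\tilde\phi_\alpha)=\delta_{\gamma\alpha}$ by splitting on $|\gamma|$. The cases $\gamma=\alpha$, then $|\gamma|\ge|\alpha|$ with $\gamma\neq\alpha$, and then $|\gamma|=|\alpha|-1$, are immediate from the monomial formula together with the vanishing first moments. The only delicate range is $|\gamma|\le|\alpha|-2$, where both $L_\gamma(\phi_\alpha)$ and the subtracted sum $\sum_{|\beta|\le|\alpha|-2}C_\alpha^\beta L_\gamma(\phi_\beta)$ contribute; here I would expand both through the monomial formula and show they cancel, once more using $m_\delta=0$ for $|\delta|=1$ to kill the intermediate terms. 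This cancellation is the main obstacle, and it is exactly where the restriction $l\le 3$ used in the sequel enters: the single step correction built into $\tilde\phi_\alpha$ annihilates precisely the lower order moments that arise for $|\alpha|\le 3$, so the verification is a finite explicit check rather than a recursive Gram--Schmidt argument.

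Finally I would read off both assertions. For the expansion, uniqueness of $\Pi_K^l$ applied entrywise gives $\Pi_K^l\sigma=\sum_{|\alpha|\le l}\ba_K^\alpha\tilde\phi_\alpha$ and likewise $\Pi_K^l u=\sum_{|\alpha|\le l}\bsa_K^\alpha\tilde\phi_\alpha$. For the ``moreover'' part with $l\le 3$, I would write $(I-\Pi_K^{l-1})\Pi_K^l\sigma=\sum_{|\alpha|\le l}\ba_K^\alpha\,(I-\Pi_K^{l-1})\tilde\phi_\alpha$; the terms with $|\alpha|\le l-1$ drop out because $\tilde\phi_\alpha\in\bbP_{l-1}(K)$ is reproduced by $\Pi_K^{l-1}$, while for $|\alpha|=l$ the biorthogonality $L_\gamma(\tilde\phi_\alpha)=0$ for all $|\gamma|\le l-1$ forces $\Pi_K^{l-1}\tilde\phi_\alpha=0$, whence $(I-\Pi_K^{l-1})\tilde\phi_\alpha=\tilde\phi_\alpha$. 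This leaves exactly $\sum_{|\alpha|=l}\ba_K^\alpha\tilde\phi_\alpha$, and the identical argument for $u$ completes the proof.
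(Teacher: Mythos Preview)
Your argument is correct. The paper does not supply its own proof of this lemma---it is simply quoted from \cite{MR4456710}---so there is no in-paper argument to compare against. Your reduction to the scalar biorthogonality $L_\gamma(\tilde\phi_\alpha)=\delta_{\gamma\alpha}$, the moment formula $L_\gamma(\phi_\alpha)=m_{\alpha-\gamma}/(\alpha-\gamma)!$ for $\gamma\le\alpha$, and the centroid identity $m_\delta=0$ for $|\delta|=1$ are exactly the right ingredients, and the case split on $|\gamma|$ versus $|\alpha|$ is the natural route.

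One remark worth stating explicitly. You correctly flag that the one-step correction in the definition of $\tilde\phi_\alpha$ only enforces biorthogonality through $|\alpha|\le 3$. In fact the expansion \eqref{defakalpha}, although the lemma states it for arbitrary $l$, does not hold for $l\ge 4$ with this $\tilde\phi_\alpha$: for $|\alpha|=4$ and $\gamma=0$ one computes
\[
L_0(\tilde\phi_\alpha)=-\sum_{|\beta|=2}C_\alpha^\beta\,\frac{m_\beta}{\beta!},
\]
which involves the (generically nonzero) second centered moments of $K$ and does not cancel. This is a harmless overstatement, since the paper never invokes the lemma beyond $l=3$, but your caution about the range is well placed and your proof covers precisely what is actually needed.
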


Since $(\Dev\!-\!\PiuCR)\phi_{(0,0)}\!=\!(\Dev\!-\!\PiuECR)\phi_{(0,0)}=0$, and $(I-\PiCR)\phi_{\beta}=0$ for $|\beta|=1$.
Lemma \ref{lm:expansion} indicates that the interpolation errors of $\sigma$ and $u$ can be expressed in terms of the basis $\ba_K^\beta\phi_{\beta}$ and $\bsa^\beta_K\phi_\beta$ as below, respectively 
\setlength{\jot}{0.01ex}
\begin{equation}\label{RTmatrix1}
\begin{aligned}
&(\Dev-\PiuCR)\Pi_h^1\sigma=\sum_{|\beta|=1}(\Dev-\PiuCR)(\ba_K^\beta\phi_{\beta}), \\
&(\Dev-\PiuCR)(I-\Pi_h^1)\Pi_h^{2}\sigma=\sum_{|\alpha|=2}(\Dev-\PiuCR)(\ba_K^\alpha\phi_{\alpha}),
\\
&(\Dev-\PiuECR)\Pi_h^1\sigma=\sum_{|\beta|=1}(\Dev-\PiuECR)(\ba_K^\beta\phi_{\beta}),\\
&(\Dev-\PiuECR)(I-\Pi_h^1)\Pi_h^{2}\sigma=\sum_{|\alpha|=2}(\Dev-\PiuECR)(\ba_K^\alpha\phi_{\alpha}),\\
&(I-\PiCR)(I-\Pi_h^2)\Pi_h^3u=\sum_{|\alpha|=3}(I-\PiCR)(\bsa_K^\alpha\phi_\alpha).
\end{aligned}
\end{equation}
By the triangle inequality and estimates in \eqref{errorPikl}, the interpolation error terms in Lemma \ref{th22} and \ref{th21} can be rewritten in a compact form using the particular basis functions in Lemma \ref{lm:expansion}, namely  
\begin{equation}\label{Intmain2}
	\begin{aligned} 
               &\|(\Dev-\PiuCR)\sigma\|_{\Omega}^2 = h^2F_1(\sigma, \Omega) + 2\sum_{K\in\Th}\sum_{\vert{\alpha}\vert=2}\sum_{\vert{\beta}\vert=1} I_K^{\alpha\beta} +\mathcal{O}(h^4),\\
               & \|(\Dev-\PiuECR)\sigma\|_{\Omega}^2= h^2F_2(\sigma, \Omega) + 2\sum_{K\in\Th}\sum_{\vert{\alpha}\vert=2}\sum_{\vert{\beta}\vert=1} J_K^{\alpha\beta}+\mathcal{O}(h^4), \\              
               &((I-\PiCR) u, \Pih u)=h^2F_3(u, \Omega)+2\sum_{K\in\Th}\sum_{|\alpha|=3}\sum_{|\beta|=0}T_K^{\alpha\beta} +\mathcal{O}(h^4),
	\end{aligned}
\end{equation}
where  cross terms
%$
%I_K^{\alpha\beta}=((\Dev-\PiuCR)(\ba_K^\beta\phi_{\beta}), (\Dev-\PiuCR) (\ba_K^\alpha\phi_{\alpha}))_K 
%$, $
%J_K^{\alpha\beta}=((\Dev-\PiuECR)(\ba_K^\beta\phi_{\beta}), (\Dev -\PiuECR)(\ba_K^\alpha\phi_{\alpha}))_K
%$, $
%T_K^{\alpha\beta}=((I-\PiCR)(I-\Pi_h^2)\Pi_h^3 u, \Pih\Pi_h^2 u)_K.
%$ 
$$
\begin{aligned}
I_K^{\alpha\beta}=&((\Dev-\PiuCR)(\ba_K^\beta\phi_{\beta}), (\Dev-\PiuCR) (\ba_K^\alpha\phi_{\alpha}))_K,
\\
J_K^{\alpha\beta}=&((\Dev-\PiuECR)(\ba_K^\beta\phi_{\beta}), (\Dev-\PiuECR)(\ba_K^\alpha\phi_{\alpha}))_K,
\\
T_K^{\alpha\beta}=&((I-\PiCR)(\bsa_K^\alpha\phi_\alpha), \bsa_K^\beta\phi_\beta)_K.
%T_K^{\alpha\beta}=&((I-\PiCR)(I-\Pi_h^2)\Pi_h^3 u, \Pih\Pi_h^2 u)_K.
\end{aligned}
$$ 

\begin{lemma}\label{lm:RTmatrix}
For $\sigma \in H^2(K,\Rdd)$ and $u \in H^3(K,\Rd)$, it holds that
\setlength{\jot}{0.01ex}
$$
\begin{aligned}
 I_K^{\alpha\beta}=& (\ba^\alpha_K)^T\ba^\beta_K:\bc_K^{\alpha\beta} - \frac{1}{2} ((\ba^\alpha_K)^T\circ \ba^\beta_K): \bd_K^{\alpha\beta}, 
\\
J_K^{\alpha\beta}=& (\ba^\alpha_K)^T\ba^\beta_K: \be_K^{\alpha\beta} 
 - \frac{1}{2} ((\ba^\alpha_K)^T\circ \ba^\beta_K):  \Bf_K^{\alpha\beta},
\qquad
T_K^{\alpha\beta}=(\bsa^\alpha_K)^T\bsa^\beta_Kc_K^{\alpha\beta},
\end{aligned} 
$$
where  $c^{\alpha\beta}_K=\int_K((I-\PiCR)\phi_\alpha)\phi_\beta\dx$, 
$\bc_K^{\alpha\beta} \!=  \!\int_K R_\alpha^2 (R_\beta^2)^T \dx$, 
$\bd_K^{\alpha\beta}\!=\!\int_K R_\alpha^2 \circ (R_\beta^2)^T \dx$,  
$\be^{\alpha\beta}_K\!=\!\int_K R_\alpha^2 (R_\beta^2)^T + R_\alpha^1 (R_\beta^2)^T \dx + R_\alpha^2(R_\beta^1)^T + R_\alpha^1(R_\beta^1)^T$, 
$\Bf^{\alpha\beta}_K\!=\!\int_K R_\alpha^2 \circ (R_\beta^2)^T + R_\alpha^1\circ (R_\beta^2)^T
+ R_\alpha^2\circ(R_\beta^1)^T\dx$ with 
$R_\alpha^1=(I  \!- \! \Pih)\PiRT(\phi_\alpha\bid)$ and $R_\alpha^2=(I-\Pih\PiRT)(\phi_\alpha\bid)$.

%\setlength{\jot}{0.01ex}
%\begin{align*}
%\bc_K^{\alpha\beta} \!&=  \!\int_K ((I  \!- \! \Pih)\PiRT(\phi_\alpha\bid)) ((I- \Pih)\PiRT(\phi_\beta\bid))^T \dx 
%\\
%\bd_K^{\alpha\beta}\!&=\!\int_K (( I\!-\!\Pih)\PiRT (\phi_\alpha\bid)) \circ((I\!-\!\Pih)\PiRT(\phi_\beta\bid))^T \dx
%\\
%\be^{\alpha\beta}_K\!&=\!\int_K(I-\Pih)\PiRT(\phi_\alpha\bid)(I-\Pih\PiRT)(\phi_\beta\bid)^T \dx
%\\
%\Bf^{\alpha\beta}_K\!&=\!\int_K (I-\Pih)\PiRT(\phi_\alpha\bid)\circ(I-\Pih\PiRT)(\phi_\beta\bid)^T \dx
%\\
%\bg^{\alpha\beta}_K\!&=\!\int_K(I-\Pih)\PiRT(\phi_\alpha\bid)(I-\Pih)\PiRT(\phi_\beta\bid)^T \dx.
%\end{align*}

	% $\bh^{\alpha\beta}_K= \int_K(I-\Pih)\PiRT(\phi_\alpha\bid)(\phi_\beta\bid)^T \dx$, 
	% $\bj^{\alpha\beta}_K= \int_K(I-\Pih)\PiRT(\phi_\alpha\bid)\circ(I-\Pih)(\phi_\beta\bid)^T \dx$, 
	%$\bl^{\alpha\beta}_K= \int_K(I-\Pih)\PiRT(\phi_\alpha\bid)(\phi_\alpha\bid)^T \dx$, 
	%$\bq^{\alpha\beta}_K= \int_K(I-\Pih)\PiRT(\phi_\alpha\bid)(I-\Pih)\PiRT(\phi_\beta\bid)^T \dx$, 	
\end{lemma}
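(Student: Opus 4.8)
The plan is to reduce each of the three brackets to the form ``constant coefficient times a fixed, $\sigma$-independent tensor'', so that the constants $\ba_K^\alpha,\ba_K^\beta$ (resp.\ $\bsa_K^\alpha,\bsa_K^\beta$) can be pulled outside the integral over $K$, leaving precisely $\bc_K^{\alpha\beta},\bd_K^{\alpha\beta},\be_K^{\alpha\beta},\Bf_K^{\alpha\beta}$ and the scalar $c_K^{\alpha\beta}$. The starting point is two structural identities. First, since $\Dev$ commutes with the component-wise projection $\Pih$, the definitions in \eqref{def:inter} give $\PiuCR=\Dev\,\Pih\PiRT$, hence $(\Dev-\PiuCR)\sigma=\Dev\,(I-\Pih\PiRT)\sigma$, while $\PiuECR=\PiuCR+(I-\Pih)\PiRT$ gives $(\Dev-\PiuECR)\sigma=\Dev\,(I-\Pih\PiRT)\sigma-(I-\Pih)\PiRT\sigma$. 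Second, because $\PiRT$ acts row-wise and $\Pih$ component-wise, for a constant matrix and a scalar $\phi$ one has $(I-\Pih\PiRT)(\ba_K^\alpha\phi)=\ba_K^\alpha R^2$ and $(I-\Pih)\PiRT(\ba_K^\alpha\phi)=\ba_K^\alpha R^1$, with $R^1,R^2$ exactly the tensors in the statement built from $\phi\bid$. Taking $\phi=\phi_\alpha$ yields the compact forms $(\Dev-\PiuCR)(\ba_K^\alpha\phi_\alpha)=\Dev(\ba_K^\alpha R_\alpha^2)$ and $(\Dev-\PiuECR)(\ba_K^\alpha\phi_\alpha)=\Dev(\ba_K^\alpha R_\alpha^2)-\ba_K^\alpha R_\alpha^1$.

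Next I would strip off the deviatoric operator. The elementary identities $\Dev M:\Dev N=M:N-\frac{1}{2}\Tr M\,\Tr N$ and $\Dev M:N=M:N-\frac{1}{2}\Tr M\,\Tr N$ (both following from $\bid:N=\Tr N$, $\bid:\bid=2$, and $\Tr\Dev M=0$) convert every inner product into a sum of plain Frobenius products $(\ba_K^\alpha X):(\ba_K^\beta Y)$ and trace products $\Tr(\ba_K^\alpha X)\,\Tr(\ba_K^\beta Y)$. Two factoring identities, verified by a direct index expansion, then pull the constants out: $(\ba_K^\alpha X):(\ba_K^\beta Y)=\big((\ba_K^\alpha)^T\ba_K^\beta\big):(XY^T)$ and $\Tr(\ba_K^\alpha X)\,\Tr(\ba_K^\beta Y)=\big((\ba_K^\alpha)^T\!\circ\ba_K^\beta\big):(X\circ Y^T)$, where the four-index contraction and the transposes are matched termwise. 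Integrating over $K$ and collecting integrands produces $\bc_K^{\alpha\beta},\bd_K^{\alpha\beta}$ for $I_K^{\alpha\beta}$ and $\be_K^{\alpha\beta},\Bf_K^{\alpha\beta}$ for $J_K^{\alpha\beta}$; in particular, the single-$\Dev$ computation gives $I_K^{\alpha\beta}=(\ba_K^\alpha)^T\ba_K^\beta:\bc_K^{\alpha\beta}-\frac{1}{2}\big((\ba_K^\alpha)^T\!\circ\ba_K^\beta\big):\bd_K^{\alpha\beta}$ at once.

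The scalar identity $T_K^{\alpha\beta}$ is the easiest, and I would dispatch it first as a warm-up: since $\PiCR$ acts component-wise and $\bsa_K^\alpha$ is constant, $(I-\PiCR)(\bsa_K^\alpha\phi_\alpha)=\bsa_K^\alpha(I-\PiCR)\phi_\alpha$, whence $T_K^{\alpha\beta}=\int_K\big(\bsa_K^\alpha(I-\PiCR)\phi_\alpha\big)\cdot\big(\bsa_K^\beta\phi_\beta\big)\dx=(\bsa_K^\alpha)^T\bsa_K^\beta\int_K\big((I-\PiCR)\phi_\alpha\big)\phi_\beta\dx=(\bsa_K^\alpha)^T\bsa_K^\beta\,c_K^{\alpha\beta}$.

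The main obstacle is the ECR product $J_K^{\alpha\beta}$: expanding $\big(\Dev(\ba_K^\beta R_\beta^2)-\ba_K^\beta R_\beta^1\big):\big(\Dev(\ba_K^\alpha R_\alpha^2)-\ba_K^\alpha R_\alpha^1\big)$ generates four contributions in which the deviatoric operator sits asymmetrically, so one must carefully record which contributions carry a $\Dev$ (and hence a trace-product term) and which do not. The pure $R^1$--$R^1$ contribution carries none, which is exactly why $\Bf_K^{\alpha\beta}$ has only three summands while $\be_K^{\alpha\beta}$ has four. Keeping the signs of the mixed $R^1$--$R^2$ cross terms and the transpose/$\circ$ index conventions consistent throughout this expansion is the delicate bookkeeping at the heart of the proof; once it is carried out, the Frobenius parts assemble into $\be_K^{\alpha\beta}$ and the trace parts into $\Bf_K^{\alpha\beta}$, completing the identity for $J_K^{\alpha\beta}$.
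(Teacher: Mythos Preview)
Your proposal is correct and follows essentially the same route as the paper: you derive the key structural identities $(\Dev-\PiuCR)(\ba_K^\alpha\phi_\alpha)=\Dev(\ba_K^\alpha R_\alpha^2)$ and $(\Dev-\PiuECR)(\ba_K^\alpha\phi_\alpha)=\Dev(\ba_K^\alpha R_\alpha^2)-\ba_K^\alpha R_\alpha^1$ via $\PiRT(\ba_K^\alpha\phi_\alpha)=\ba_K^\alpha\PiRT(\phi_\alpha\bid)$, then strip the deviatoric with $\Dev M:\Dev N=M:N-\tfrac12\Tr M\,\Tr N$ and pull the constants out with $(AX):(BY)=(A^TB):(XY^T)$ and its $\circ$-analogue---exactly the identities the paper uses. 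Your observation that the pure $R^1$--$R^1$ term carries no $\Dev$ (hence three summands in $\Bf_K^{\alpha\beta}$ versus four in $\be_K^{\alpha\beta}$) is the one point of care in the $J_K^{\alpha\beta}$ bookkeeping, and you have identified it correctly.
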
 
\begin{proof}
Since $(I-\PiCR)(\bsa_K^\alpha\phi_\alpha)=\bsa_K^\alpha(I-\PiCR)\phi_\alpha$, this and the definition of $c_K^{\alpha\beta}$ lead to the last equation in Lemma 3.6.

Note that $\ba_K^\alpha \phi_\alpha \bmn =\ba_K^\alpha ((\phi_\alpha \bid)\bmn)$ and $\ba_K^\alpha\in \mathbb{P}_0(K,\Rdd)$. By \eqref{RTdof},
$$
\PiRT (\ba_K^\alpha \phi_\alpha)
=\ba_K^\alpha\sum_{i=1}^d\frac{1}{2\left|{K}\right|}\int_{\F_i} (\phi_\alpha \bid)\bmn_i \ds\otimes\left(\bmx-\bmp_i\right)
=\ba_K^\alpha \PiRT (\phi_\alpha\bid).
$$
Thus, by the definition of $R_\alpha^i$ above,
\begin{equation*}
%\label{RTAid}
\begin{aligned}
(\Dev-\PiuECR)(\ba_K^\alpha \phi_\alpha) + \ba_K^\alpha R_\alpha^1= (\Dev-\PiuCR)(\ba_K^\alpha \phi_\alpha)=\Dev(\ba_K^\alpha R_\alpha^2).
\end{aligned}
\end{equation*}
% Thus 
% $$
% \begin{aligned}
% (\Dev-\PiuCR)(\ba_K^\alpha \phi_\alpha)&=\Dev(\ba_K^\alpha (I-\Pih\PiRT)(\phi_\alpha\bid)),
% \\
% (\Dev-\PiuECR)(\ba_K^\alpha \phi_\alpha)&
% =\Dev(\ba_K^\alpha (I-\Pih\PiRT)(\phi_\alpha\bid))-\ba_K^\alpha(I-\Pih)\PiRT(\phi_\alpha\bid),
% %=(\Dev-\PiuCR)(\ba_K^\alpha \phi_\alpha)-\ba_K^\alpha(I-\Pih)\PiRT(\phi_\alpha\bid),
% \end{aligned}
% $$
For any matrices $A$, $B$, $C$, $D\in \Rdd$, it holds that $\Dev(AB):\Dev(CD)=(AB):(CD) - \frac12\Tr(AB)\Tr(CD)$, $(AB):(CD)=(C^TA):(DB^T)$, and $\Tr(AB)\Tr(CD)=(C^T\circ A):(D\circ B^T)$.
Let $A= \ba_K^\beta$, $B=R_\beta^2$, $C=\ba_K^\alpha$, and $D=R_\alpha^2$. Then
\begin{equation*}
%\label{RTmatrix11}
\begin{aligned}
I_K^{\alpha\beta}
=&\int_K (\ba_K^\alpha)^T(\ba_K^\beta) : R_\alpha^2(R_\beta^2)^T\dx
-\frac12\int_K(\ba_K^\alpha)^T\circ\ba_K^\beta : R_\alpha^2 \circ (R_\beta^2)^T\dx,
\\
J_K^{\alpha\beta}
= & I_K^{\alpha\beta} 
+\int_K(\ba_K^\alpha)^T\ba_K^\beta :  
(R_\alpha^1(R_\beta^2)^T   
+ R_\alpha^2(R_\beta^1)^T
+ R_\alpha^1(R_\beta^1)^T)\dx
\\
&-\frac12\int_K(\ba_K^\alpha)^T\circ(\ba_K^\beta) : (R_\alpha^1\circ (R_\beta^2)^T
+ R_\alpha^2\circ(R_\beta^1)^T)\dx,
\end{aligned}
\end{equation*}
which completes the proof of the first two equations in Lemma \ref{lm:RTmatrix}, which completes the proof.  \end{proof}
%By Lemma \ref{lm:expansion}, 
% $$
% \begin{aligned}
% &\Dev(I-\Pih)\Pi_h^1\tau=\sum_{|\beta|=1}\Dev(\ba_K^\beta\phi_\beta), 
% \\
% &(I-\Pih)\PiRT(I-\Pi_h^1)\Pi_h^2\tau=\sum_{|\alpha|=2}\ba_K^\alpha(I-\Pih)\PiRT(\phi_\alpha\bid),
% \\
% &\Dev(I-\Pih)(I-\Pi_h^1)\Pi_h^2\tau=\sum_{|\alpha|=2}\Dev(\ba_K^\alpha\phi_\alpha),
% \\
% &(I-\Pih)\PiRT\Pi_h^1\tau=\sum_{|\beta|=1}\ba_K^\beta(I-\Pih)\PiRT(\phi_\beta\bid).
% \end{aligned}
% $$
% A substitution of above four equation into the left hand of third and fourth equation in \eqref{lm:RTmatrix} and completes the proof of it.

%For $|\alpha|=3$, it holds that $(I-\PiCR) \phi_{\beta}=0$ with $|\beta|\leq |\alpha|-2$. Thus
%\begin{equation}\label{PiCRP3exp}
%(I-\PiCR)(I-\Pi_h^2)\Pi_h^3u=\sum_{|\alpha|=3}\bsa_K^\alpha(I-\PiCR)\phi_\alpha, \, \Pi_h^0\Pi_h^2u=\Pi_h^0u=\sum_{|\beta|=0}\bsa_K^\beta\phi_\beta.
%\end{equation}
%A substitution of \eqref{PiCRP3exp}  into the left hand side of the last equation in Lemma \ref{lm:RTmatrix}  completes the proof.
%A combination of \eqref{DevPihRT} %, \eqref{RTmatrix1} and \eqref{RTmatrix12} %leads to \eqref{RTmatrix}, which 
%proves the second equation in Lemma \ref{lm:RTmatrix} and completes the proof.

\begin{lemma}\label{lm25}
%Assume that $\sigma\in W^{3,\infty}(\Omega)$. On a uniform triangulation with sufficiently small $h$, it holds that
It holds on uniform triangulations that
\begin{equation}\label{ASleqh4}
\begin{aligned}
\sum_{K\in\Th}(\sum_{\vert{\alpha}\vert=2}\sum_{\vert{\beta}\vert=1} (I_K^{\alpha\beta} + J_K^{\alpha\beta}) + \sum_{|\alpha|=3}\sum_{|\beta|=0} T_K^{\alpha\beta})&\lesssim h^4(\Vert{\sigma}\Vert_{3,\Omega}^2+\|u\|_{4,\Omega}^2),
%\sum_{K\in\Th}\sum_{\vert{\alpha}\vert=2}\sum_{\vert{\beta}\vert=1} (I_K^{\alpha\beta} + J_K^{\alpha\beta} + T_K^{\alpha\beta})&\lesssim h^4(\Vert{\sigma}\Vert_{3,\Omega}^2+\|u\|_{4,\Omega}^2),
	\end{aligned}
\end{equation}
	provided that $\sigma\in H^{3}(\Omega,\Rdd)$, and $u\in H^4(\Omega,\Rd)$.
\end{lemma}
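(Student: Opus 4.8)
The plan is to reduce the sum in \eqref{ASleqh4} to a pairing argument over the parallelograms of the uniform mesh, in which an \emph{exact antisymmetry} annihilates the leading contribution and leaves only a remainder that is one order smaller. First I would separate, in each of $I_K^{\alpha\beta}$, $J_K^{\alpha\beta}$, $T_K^{\alpha\beta}$ as written in Lemma \ref{lm:RTmatrix}, the \emph{smooth} factors $\ba_K^\alpha,\ba_K^\beta$ (resp. $\bsa_K^\alpha,\bsa_K^\beta$), which by Lemma \ref{lm:expansion} are element averages of $D^\alpha\sigma,D^\beta\sigma$ (resp. of $D^\alpha u$), from the purely \emph{geometric} constants $\bc_K^{\alpha\beta},\bd_K^{\alpha\beta},\be_K^{\alpha\beta},\Bf_K^{\alpha\beta},c_K^{\alpha\beta}$. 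By the scaling argument already used in Lemma \ref{P1expand}, together with the locality of $\PiRT$ in \eqref{RTdof}, the rotation invariance of $\Pih$ and the edge-based definition of $\PiCR$, these geometric constants depend only on the congruence class of $K$ and scale like $h^{|\alpha|+|\beta|+2}$. The crucial observation is that the total degree $|\alpha|+|\beta|$ equals $3$ in every term occurring in \eqref{ASleqh4}.

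Second, on the uniform mesh every interior edge is the diagonal of a parallelogram whose two triangles $K^+$ and $K^-$ are exchanged by the $180^\circ$ rotation $\mathcal R$ about its center, which maps $M_{K^+}\mapsto M_{K^-}$ and sends the local coordinate $\bmx-\MK$ to its negative. Since $\PiRT$, $\Pih$ and $\PiCR$ are all equivariant under $\mathcal R$ (the normals flip, but the defining functionals are reproduced), the generators $R_\alpha^1,R_\alpha^2$ and $(I-\PiCR)\phi_\alpha$ pick up exactly the factor $(-1)^{|\alpha|}$ in passing from $K^+$ to $K^-$. Hence each geometric constant is multiplied by $(-1)^{|\alpha|+|\beta|}=-1$, i.e.\ it is antisymmetric across the parallelogram. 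Writing each pair contribution through the bilinear form $G^{\alpha\beta}$ in the coefficient pairs, antisymmetry gives $I_{K^+}^{\alpha\beta}+I_{K^-}^{\alpha\beta}=G^{\alpha\beta}(\ba_{K^+}^\alpha,\ba_{K^+}^\beta)-G^{\alpha\beta}(\ba_{K^-}^\alpha,\ba_{K^-}^\beta)$, so the frozen-coefficient part cancels exactly within each pair, and likewise for $J_K^{\alpha\beta}$ and $T_K^{\alpha\beta}$.

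Third, by bilinearity the surviving term involves only the coefficient differences $\ba_{K^+}^\alpha-\ba_{K^-}^\alpha$ and $\ba_{K^+}^\beta-\ba_{K^-}^\beta$, each of which is $O(h)$ times one extra derivative of $\sigma$ (resp. $u$). A single pair therefore contributes $O(h^{|\alpha|+|\beta|+2})\cdot O(h)=O(h^{6})$. Summing over the $O(h^{-2})$ parallelograms and applying Cauchy--Schwarz — treating $\sum_{K}|\ba_K^\beta|^2\lesssim h^{-2}|\sigma|_{|\beta|,\Omega}^2$ as a Riemann sum and $\sum_{\rm pairs}|\ba_{K^+}^\alpha-\ba_{K^-}^\alpha|^2\lesssim|\sigma|_{|\alpha|+1,\Omega}^2$ as a discrete gradient bound — yields $h^4|\sigma|_{|\alpha|+1,\Omega}|\sigma|_{|\beta|,\Omega}$ for the $I_K,J_K$ sums and $h^4|u|_{4,\Omega}\|u\|_{0,\Omega}$ for the $T_K$ sum, both dominated by $h^4(\|\sigma\|_{3,\Omega}^2+\|u\|_{4,\Omega}^2)$. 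The unpaired triangles along $\partial\Omega$ form a strip of $O(h^{-1})$ elements, each contributing $O(h^{5})$, hence $O(h^4)$ as well, which is within the claimed bound.

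Finally, the main obstacle is the exact antisymmetry in the second step: one must verify carefully that the tensorial operations entering $R_\alpha^1,R_\alpha^2$ and the constants $\bc_K^{\alpha\beta},\dots$ — the $\otimes$ in \eqref{RTdof}, the transpositions, the product $\circ$, and $\Dev$ — are genuinely equivariant under $\mathcal R$, so that the sign $(-1)^{|\alpha|+|\beta|}$ is reproduced without any error term. Once this exact cancellation is secured, everything else is scaling and summation-by-Riemann-sum bookkeeping.
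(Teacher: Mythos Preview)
Your proposal is correct and follows essentially the same route as the paper: decompose the mesh into parallelograms plus a boundary layer, use the $180^\circ$ rotation about each parallelogram center to obtain the exact antisymmetry $\bc_{K_2}^{\alpha\beta}=-\bc_{K_1}^{\alpha\beta}$ (and likewise for $\bd,\be,\Bf,c$) from the odd total degree $|\alpha|+|\beta|=3$, bound the residual via the coefficient differences $\ba_{K_1}^\gamma-\ba_{K_2}^\gamma$ through a Bramble--Hilbert argument, and handle the unpaired boundary triangles with a strip estimate. The only refinement in the paper is that the boundary contribution is controlled via Cauchy--Schwarz together with the trace-type inequality of \cite[Lemma 3.7]{MR4350533} rather than a naive $O(h^{-1})\cdot O(h^5)$ count, which is what produces the stated Sobolev-norm dependence.
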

\begin{proof}
The partition $\mathcal{T}_h$ of domain $\Omega$ includes the set of parallelograms $\mathcal{N}_1$ and that of a few remaining boundary triangles $\mathcal{N}_2$. Let
$
	\kappa=\left|\mathcal{N}_2\right|
$
denote the number of the elements in $\mathcal{N}_2$. %It holds that
%\begin{equation}\label{IN1and2}
%	((\Dev-\PiuCR)\Pi_h^1\sigma, (\Dev-\PiuCR)(I-\Pi_h^1)\sigma) = \sum_{K\subset \mathcal{N}_1}I_K+\sum_{K\subset \mathcal{N}_2}I_K
%\end{equation}
%	with $I_K= ((\Dev-\PiuCR)\Pi_h^1\sigma, (\Dev-\PiuCR)(I-\Pi_h^1)\sigma)_{K}$.
%	By Lemma \ref{lm:RTmatrix},
%	\begin{equation}\label{APisigma}
%	\begin{aligned}
%I_K
%%=&\left(\Dev(I-\PiRT)\Pi_K^1\sigma,(I-\PiRT)(I-\Pi_K^1)\Pi_K^2\sigma\right)_{K} + \mathcal{O}\left(h^4\Vert{\sigma}\Vert_{3,K}^2\right)
%%\\
%=&\sum_{\vert{\alpha}\vert=2}\sum_{\vert{\beta}\vert=1}( (\ba^\alpha_K)^T\ba^\beta_K:\bc_K^{\alpha\beta} - \frac{1}{2} ((\ba^\alpha_K)^T\circ \ba^\beta_K): \bd_K^{\alpha\beta}) + \mathcal{O}\left(h^4\Vert{\sigma}\Vert_{3,K}^2\right)
%\end{aligned}
%	\end{equation}

Consider $\sum_{K\subset \mathcal{N}_1}I_K^{\alpha\beta}$. Let $\mathbf{c}$ be the centroid of a parallelogram formed by two
	adjacent elements $K_1$ and $K_2$. Define a mapping $T: \bmx\to\tilde{\bmx}=2\mathbf{c}-\bmx$. It is obvious that $T$ maps $K_1$ onto $K_2$ and $\bmx-\mathbf{M}_{K_1}=-(\tilde{\bmx}-\mathbf{M}_{K_2})$. 
%	By \eqref{RTdof}, 
%	\begin{equation}\label{mapphi}
%	\begin{aligned}
%		\PiRT\left(\left(\tilde{\bmx}-\mathbf{M}_{K_2}\right)^\alpha \bid \right)|_{K_2}
%		&=\frac{1}{2\vert{K_2}\vert}\sum_{j=1}^{3}\int_{e^2_j} \left(\tilde{\bmx}-\mathbf{M}_{K_2}\right)^\alpha \bmn_j {\rm\,d\tilde{s}}\otimes (\tilde{\bmx}-\tilde{\bmp}_j)\\
%		&=\frac{1}{2\vert{K_1}\vert}\sum_{j=1}^{3}\int_{e^1_j} (-1)^{|\alpha|}\left(\bmx-\mathbf{M}_{K_1}\right)^\alpha (-1) \bmn_j \ds \otimes (-1)(\bmx-\bmp_j)\\
%		&=(-1)^{\vert{\alpha}\vert}\PiRT\left(\left(\bmx-\mathbf{M}_{K_1}\right)^\alpha \bid\right)|_{K_1}.
%	\end{aligned}
%	\end{equation}
Similar to the analysis of Lemma~3.5 in \cite{MR4456710}, it follows from $|\alpha|=2$, $|\beta|=1$ and the geometric symmetry that	
$$
\bc_{K_2}^{\alpha\beta}=-\bc_{K_1}^{\alpha\beta},\quad 
\bd_{K_2}^{\alpha\beta}=-\bd_{K_1}^{\alpha\beta},\quad
\be_{K_2}^{\alpha\beta}=-\be_{K_1}^{\alpha\beta},\quad
\Bf_{K_2}^{\alpha\beta}=-\Bf_{K_1}^{\alpha\beta}.
$$
There follows the decomposition below, which also works for $ J_{K}^{\alpha\beta}$ and $ T_{K}^{\alpha\beta}$,
	\begin{equation}\label{expanAPi1S}
		\begin{aligned}
 I_{K_1}^{\alpha\beta} +  I_{K_2}^{\alpha\beta}&  =
 (\ba^\alpha_{K_1}-\ba^\alpha_{K_2})^T\ba^\beta_{K_1}:\bc_{K_1}^{\alpha\beta} 
 +(\ba^\alpha_{K_2})^T(\ba^\beta_{K_1}-\ba^\beta_{K_2}):\bc_{K_1}^{\alpha\beta} 
 \\
&- \frac{1}{2} ((\ba^\alpha_{K_1} -\ba^\alpha_{K_2})^T\circ \ba^\beta_{K_1}):\bd_{K_1}^{\alpha\beta} 
\\
&- \frac{1}{2}((\ba^\alpha_{K_2})^T\circ (\ba^\beta_{K_1}-\ba^\beta_{K_2})):\bd_{K_1}^{\alpha\beta}
+ \mathcal{O}(h^4\Vert{\sigma}\Vert_{3,K_1\cup K_2}^2).
		\end{aligned}
	\end{equation}
Note that  
$
\frac{1}{\left|K_1\right|} \int_{K_1} \operatorname{D}^\gamma \sigma d x-\frac{1}{\left|K_2\right|} \int_{K_2} \operatorname{D}^\gamma \sigma \dx=~0
$
for all $\sigma\in \bbP_{|\gamma|}(K_1\cup K_2,\Rdd)$. 
By Bramble–Hilbert lemma, for any $|\alpha|=2$ and $|\beta|=1$,
%$$ |(\ba^\alpha_{K_1}-\ba^\alpha_{K_2})_{ij}|\lesssim |\sigma_{ij}|_{3, K_1 \cup K_2},\quad 
%|(\ba^\beta_{K_1}-\ba^\beta_{K_2})_{ij}| \lesssim |\sigma_{ij}|_{2, K_1 \cup K_2}.
%$$ 
%Thus,
\begin{equation}\label{errorak}
 |(\ba^\alpha_{K_1}-\ba^\alpha_{K_2})_{ij}|+|(\ba^\beta_{K_1}-\ba^\beta_{K_2})_{ij}| \lesssim \|\sigma_{ij}\|_{3, K_1 \cup K_2}.
\end{equation}
Recall $\ba_K^\gamma=\frac{1}{|K|} \int_K  \operatorname{D}^\gamma \sigma \dx$ in \eqref{defakalpha}. Thus, for any $1\le i, j\le 2$,
\begin{equation}\label{estaK}
	\vert{(\ba_{K}^\gamma)_{ij}}\vert\leq \frac{1}{\vert{K}\vert}\left\|{ \operatorname{D}^\gamma\sigma_{ij}}\right\|_{0,K}\Vert{1}\Vert_{0,K}\lesssim h^{-1}\Vert{\sigma_{ij}}\Vert_{|\gamma|,K}.
\end{equation}
By the definition of $\phi_\alpha$, it holds that
\begin{equation}
\begin{aligned}\label{estcd}
|{\bc_{K}^{\alpha\beta}}| + |{\bd_{K}^{\alpha\beta}}| + |{\be_{K}^{\alpha\beta}}| + |{\Bf_{K}^{\alpha\beta}}|&\lesssim h^3|K|.
\end{aligned}
\end{equation}
A substitution of \eqref{errorak}, \eqref{estaK} and \eqref{estcd} to \eqref{expanAPi1S} gives
\begin{equation}\label{impIN1}
\begin{aligned}
\sum_{K\subset \mathcal{N}_1} I_K^{\alpha\beta}
\lesssim h^4\sum_{K\in\mathcal{N}_1}\left\|{\sigma}\right\|_{3,K}^2 \lesssim h^4\left\|{\sigma}\right\|^2_{3,\Omega}
\end{aligned}
\end{equation}
%For any element $K\in\mathcal{N}_2$, it follows from \eqref{estaK} and   \eqref{estcd} that
%$$
%|I_K^{\alpha\beta}|\lesssim h^3\left|{\sigma}\right|_{1,K}\left|{\sigma}\right|_{2,K} + \mathcal{O}\left(h^4\Vert{\sigma}\Vert_{3, K}^2\right).
%$$
Note that $\mathcal{N}_2\subset\partial_h\Omega:=\{\bmx \in \Omega: \exists \mathbf{y} \in \partial \Omega \mbox { such that } \operatorname{dist}(\bmx, \mathbf{y}) \leq h\}$, it follows from \cite[Lemma 3.7]{MR4350533}, the Cauchy-Schwarz inequality, \eqref{estaK} and   \eqref{estcd} that
\begin{equation*}
% \label{impIN2}
\begin{aligned}
|{\sum_{K\subset\mathcal{N}_2} I_K^{\alpha\beta}}|&\lesssim h^3(\sum_{K\in\mathcal{N}_2}|{\sigma}|_{1,K}^2)^{\frac{1}{2}}(\sum_{K\in\mathcal{N}_2}|{\sigma}|_{2,K}^2)^{\frac{1}{2}} %+ h^4\Vert{\sigma}\Vert_{3, \partial_h\Omega}^2 
% \lesssim h^4\|\sigma\|^2_{\frac{5}{2},\Omega}+ h^4\Vert{\sigma}\Vert_{3, \partial_h\Omega}^2
\lesssim  h^4\|\sigma\|_{3,\Omega}^2.
\end{aligned}
\end{equation*}
A combination of this and \eqref{impIN1} leads to the estimate of $I_K^{\alpha\beta}$ in \eqref{ASleqh4}.
Similar analysis can yield the other estimates in \eqref{ASleqh4}, which completes the proof.
\end{proof}

Thanks to Lemma \ref{lm25}  and \eqref{Intmain2}, there exists the following fourth order accurate expansion of $\|\nabla u-\PiuCR\sigma\|^2_{0,\Omega}$, $(u-\PiCR u, u)$ and $\|\nabla u-\PiuECR\sigma\|^2_{0,\Omega}$.
\begin{lemma}\label{DevPiRTs}
% For any $\sigma\in H^{3}(\Omega,\Rdd)$ and $u\in H^{4}(\Omega,\Rd)$ with $\nabla u=\Dev \sigma$, it hold that
It holds for $\sigma \!\in\! H^{3}(\Omega,\Rdd)$ and $u\! \in \!H^{4}(\Omega,\Rd)$ with $\nabla u \!=\!\Dev \sigma$ that
\begin{equation*}
\begin{aligned}
&\left\Vert{\nabla u-\PiuCR\sigma}\right\Vert^2 = h^2F_1(\sigma,\Omega) + \mathcal{O}(h^4(\Vert{\sigma}\Vert_{3,\Omega}^2+\|u\|_{4,\Omega}^2)), \\
&\left\Vert{\nabla u-\PiuECR\sigma}\right\Vert^2 = h^2F_2(\sigma,\Omega) + \mathcal{O}(h^4(\Vert{\sigma}\Vert_{3,\Omega}^2+\|u\|_{4,\Omega}^2)),\\
% ( \Dev(I -\Pih)\sigma,(I-\Pih)\PiRT\sigma)= h^2F_3(\sigma,\Omega) + \mathcal{O}(h^4\|\sigma\|_{3,\Omega}^2),
&(u-\PiCR u, u) = h^2F_3(u,\Omega) + \mathcal{O}(h^4(\Vert{\sigma}\Vert_{3,\Omega}^2+\|u\|_{4,\Omega}^2)),
\end{aligned}
\end{equation*}
% \begin{equation*}
% \begin{aligned}
% &\left\Vert{\nabla u-\PiuCR\sigma}\right\Vert^2 - 2\lambda (u-\PiCR u, u) = h^2(F_1(\sigma,\Omega)-2\lambda F_4(u,\Omega)) + \mathcal{O}(h^4), \\
% &\left\Vert{\nabla u-\PiuECR\sigma}\right\Vert^2 = h^2F_2(\sigma,\Omega) + \mathcal{O}(h^4\|\sigma\|_{3,\Omega}^2),\\
% % ( \Dev(I -\Pih)\sigma,(I-\Pih)\PiRT\sigma)= h^2F_3(\sigma,\Omega) + \mathcal{O}(h^4\|\sigma\|_{3,\Omega}^2),
% & = h^2 + \mathcal{O}(h^4\|u\|_{4,\Omega}^2),
% \end{aligned}
% \end{equation*}
%where $F_1(\sigma,\Omega)$, $F_2(\sigma,\Omega)$ and $F_3(u,\Omega)$ are defined in \eqref{FsigmaG}.
where, for $i=1,2,3$, $F_i(\cdot,\Omega):=\sum_{K\in\Th}F_i(\cdot,K)$, and $F_i(\cdot,K)$ is defined in \eqref{FsigmaG}.
\end{lemma}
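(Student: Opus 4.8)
The plan is to obtain all three expansions by assembling results already established in the preceding lemmas; no new analytic estimate is needed beyond a careful combination. First I would use the three identities collected in \eqref{Intmain} to pass from the quantities appearing in Lemma \ref{th22} and Lemma \ref{th21} to forms that can be expanded elementwise. Since $\nabla u=\Dev\sigma$ by hypothesis, the first two become $\|\nabla u-\PiuCR\sigma\|^2_{0,\Omega}=\|(\Dev-\PiuCR)\sigma\|^2_{0,\Omega}$ and $\|\nabla u-\PiuECR\sigma\|^2_{0,\Omega}=\|(\Dev-\PiuECR)\sigma\|^2_{0,\Omega}$, while the third replaces $(u-\PiCR u,u)$ by $((I-\PiCR)u,\Pih u)$ up to an error of order $h^4|u|^2_{3,\Omega}$, which is already of the claimed size.

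Next I would invoke the compact expansions \eqref{Intmain2}, which were derived from the interpolation decompositions \eqref{RTmatrix1} together with Lemma \ref{P1expand}. Each of them isolates the leading contribution $h^2F_i(\cdot,\Omega)$ from a sum of cross terms plus a higher-order remainder; for instance
\begin{equation*}
\|(\Dev-\PiuCR)\sigma\|^2_{0,\Omega}=h^2F_1(\sigma,\Omega)+2\sum_{K\in\Th}\sum_{|\alpha|=2}\sum_{|\beta|=1}I_K^{\alpha\beta}+\mathcal{O}(h^4),
\end{equation*}
with the analogous statements for the $\PiuECR$ and $\PiCR$ quantities involving $J_K^{\alpha\beta}$ and $T_K^{\alpha\beta}$. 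Substituting the bound of Lemma \ref{lm25} for the three cross-term sums collapses them into the target remainder $\mathcal{O}(h^4(\|\sigma\|^2_{3,\Omega}+\|u\|^2_{4,\Omega}))$, and the three asserted identities follow at once.

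The genuinely hard part is not in this lemma but has already been dispatched in Lemma \ref{lm25}: the cross-term sums are not of order $h^4$ when bounded term by term, and only the cancellation between the two triangles forming each parallelogram, together with the boundary-layer estimate for the remaining triangles, brings them down to the required order. The one place where care is needed here, and where the elevated hypotheses $\sigma\in H^3(\Omega,\Rdd)$ and $u\in H^4(\Omega,\Rd)$ enter, is in confirming that the unspecified $\mathcal{O}(h^4)$ remainders in \eqref{Intmain2} are controlled by $\|\sigma\|^2_{3,\Omega}+\|u\|^2_{4,\Omega}$: these originate from the squared norms of the interpolation residuals $(I-\Pi_h^2)\sigma$ and $(I-\Pi_h^3)u$ and from their products with the lower-order parts, all bounded through \eqref{errorPikl}. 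Once this bookkeeping is in place, the proof reduces to the substitution described above.
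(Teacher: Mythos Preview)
Your proposal is correct and follows exactly the same route as the paper, which simply states that the lemma follows ``thanks to Lemma \ref{lm25} and \eqref{Intmain2}.'' You have in fact supplied more detail than the paper itself, in particular the bookkeeping that traces the unspecified $\mathcal{O}(h^4)$ remainders in \eqref{Intmain2} back to \eqref{errorPikl} and the regularity hypotheses.
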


\subsection{\!Asymptotic expansions of eigenvalues and extrapolation algorithm}

A combination of Lemma \ref{th22}, \ref{th21} and \ref{DevPiRTs} yields the following asymptotic expansions of eigenvalues by the CR element  and the ECR element.

\begin{theorem}\label{CRexpthem}
Let $(\lambda, \bmu, p)$, $(\CRlam, \CRu, \CRp)$ and $(\ECRlam, \ECRu, \ECRp)$ be the solutions of \eqref{sto1} and \eqref{sto3} by the CR element and the ECR element, respectively, and $\sigma = \nabla \bmu + p\bid$. Assume that $(\sigma,u)\in H^3(\Omega,\Rdd)\times H^4(\Omega,\Rd)$. On a uniform triangulation with sufficiently small $h$, it holds that
$$
\begin{aligned}
\lambda-\CRlam&=h^2(F_1(\sigma,\Omega)- 2 \lambda F_3(u,\Omega)) +\mathcal{O}(h^4|\ln h|(\Vert{\sigma}\Vert_{3,\Omega}^2+\|u\|_{4,\Omega}^2)),
\\
\lambda-\ECRlam&=h^2 F_2(\sigma,\Omega) +\mathcal{O}(h^4|\ln h|(\Vert{\sigma}\Vert_{3,\Omega}^2+\|u\|_{4,\Omega}^2)).
\end{aligned}
$$
%with $F_1(\sigma,\Omega)$, $F_2(\sigma,\Omega)$ and $F_3(u,\Omega)$ defined in \eqref{FsigmaG}.
\end{theorem}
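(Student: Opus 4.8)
The plan is to obtain both expansions by a direct substitution: the genuine analytic work has already been carried out in Lemmas \ref{th22}, \ref{th21} and \ref{DevPiRTs}, so all that remains is to feed the fourth-order interpolation-error expansions into the reduced eigenvalue identities and keep track of the remainder orders. No new estimate is needed.

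For the CR element I would start from Lemma \ref{th22}, which already collapses the eigenvalue error to two interpolation-error functionals plus a controlled remainder,
\[
\lambda-\CRlam=\|\nabla u-\PiuCR\sigma\|^2_{0,\Omega}-2\lambda(u-\PiCR u,u)+\mathcal{O}\!\left(h^4|\ln h|(\|\sigma\|^2_{2,\infty,\Omega}+\|u\|^2_{2,\Omega})\right).
\]
The two leading terms here are exactly the quantities expanded in Lemma \ref{DevPiRTs}. I would insert $\|\nabla u-\PiuCR\sigma\|^2_{0,\Omega}=h^2F_1(\sigma,\Omega)+\mathcal{O}(h^4(\cdots))$ together with $(u-\PiCR u,u)=h^2F_3(u,\Omega)+\mathcal{O}(h^4(\cdots))$, collect the $h^2$ coefficients into $F_1(\sigma,\Omega)-2\lambda F_3(u,\Omega)$, and merge the three remainders. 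Since $\lambda$ is a fixed eigenvalue independent of $h$, the factor $2\lambda$ multiplying the $\mathcal{O}(h^4)$ remainder does not change its order. For the ECR element the argument is identical but one step shorter: Lemma \ref{th21} already reads $\lambda-\ECRlam=\|\nabla u-\PiuECR\sigma\|^2_{0,\Omega}+\mathcal{O}(h^4|\ln h|(\cdots))$, the velocity interpolation term having been absorbed there, so a single substitution of $\|\nabla u-\PiuECR\sigma\|^2_{0,\Omega}=h^2F_2(\sigma,\Omega)+\mathcal{O}(h^4(\cdots))$ from Lemma \ref{DevPiRTs} yields the stated expansion.

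The main point requiring care—and the closest thing to an obstacle—is the reconciliation of the regularity hypotheses and the uniformity of the remainder constants. I would verify that the assumption $(\sigma,u)\in H^3(\Omega,\Rdd)\times H^4(\Omega,\Rd)$ is enough to invoke Lemmas \ref{th22}, \ref{th21} and \ref{DevPiRTs} simultaneously, so that every remainder above can be subsumed into a single $\mathcal{O}(h^4|\ln h|(\|\sigma\|^2_{3,\Omega}+\|u\|^2_{4,\Omega}))$. In particular the $\|\sigma\|_{2,\infty,\Omega}$ factor carried by the remainders of Lemmas \ref{th22}--\ref{th21} must be dominated by the $H^3$-norm on the convex polygon $\Omega$, and the lower-order velocity norms $\|u\|_{2,\Omega}$, $\|u\|_{3,\Omega}$ absorbed into $\|u\|_{4,\Omega}$; these are the only estimates one must confirm rather than merely quote. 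Finally, because $F_1,F_2,F_3$ are assembled elementwise from the constants $\gamij,\etaij,\zetai$, which Lemma \ref{P1expand} shows to be mesh-independent on a uniform triangulation, the leading $h^2$ term is unambiguously identified and the two expansions follow at once.
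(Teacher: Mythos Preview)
Your proposal is correct and follows essentially the same approach as the paper, which simply states that the theorem results from combining Lemmas \ref{th22}, \ref{th21} and \ref{DevPiRTs}. You are in fact more careful than the paper in flagging the regularity bookkeeping needed to absorb the $\|\sigma\|_{2,\infty,\Omega}$ and lower-order velocity norms into the final $H^3\times H^4$ remainder bound.
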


Denote the approximate eigenvalues on $\Th$ by $\lambda_h$, which are either $\CRlam$ by the CR element or $\ECRlam$ by the ECR element. Define  eigenvalues by extrapolation algorithm
\begin{equation}\label{(3.58)}
\lambda^{\mathrm{EXP}}_h=\frac{4 \lambda_h- \lambda_{2 h}}{3}.
\end{equation}
Since $F_1(\sigma,\Omega)$, $F_2(\sigma,\Omega)$ and $F_3(u,\Omega)$  are independent of $h$ when the solution is smooth enough. A direct application of Theorem \ref{CRexpthem} proves the optimal convergence of eigenvalues by extrapolation algorithm as presented below.
% \begin{equation}\label{(3.57)}
% \lambda-\lambda_h=C h^2+\mathcal{O}\left(h^4\right).
% \end{equation}

 \begin{theorem}\label{ECRexpthem} 
If $\lambda$ is a simple eigenvalue, extrapolation eigenvalues converge at a higher rate 4, namely,
$$
\left|\lambda_{\mathrm{CR}}^{\mathrm{EXP}}-\lambda\right|  + \left|\lambda_{\mathrm{ECR}}^{\mathrm{EXP}}-\lambda\right| \lesssim h^4|\ln h| (\Vert{\sigma}\Vert_{3,\Omega}^2+\|u\|_{4,\Omega}^2),
$$
where $\CRlam^{\mathrm{EXP}}$ and $\ECRlam^{\mathrm{EXP}}$ are extrapolation eigenvalues in \eqref{(3.58)} by the CR element and the ECR element, respectively.
\end{theorem}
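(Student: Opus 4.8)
The plan is to read off Theorem \ref{ECRexpthem} as the standard consequence of Richardson extrapolation applied to the two asymptotic expansions of Theorem \ref{CRexpthem}. The whole point of the extrapolation formula \eqref{(3.58)} is that the weights $4/3$ and $-1/3$ are chosen to annihilate the $h^2$ term, provided that this term carries a coefficient common to the meshes $\Th$ and $\mathcal{T}_{2h}$.

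First I would write both expansions of Theorem \ref{CRexpthem} in the unified form
\begin{equation*}
\lambda-\lambda_h=h^2C+\mathcal{O}\big(h^4|\ln h|(\Vert{\sigma}\Vert_{3,\Omega}^2+\|u\|_{4,\Omega}^2)\big),
\end{equation*}
where $C=F_1(\sigma,\Omega)-2\lambda F_3(u,\Omega)$ for the CR element and $C=F_2(\sigma,\Omega)$ for the ECR element. The decisive ingredient, already flagged before the statement, is that $C$ is independent of $h$. Indeed, by Lemma \ref{P1expand} the element constants $\gamij$, $\etaij$ and $\zetai$ take one and the same value on every triangle of a uniform mesh and carry no dependence on $h$; therefore $F_i(\cdot,\Omega)=\sum_{K\in\Th}F_i(\cdot,K)$ collapses into fixed linear combinations of the global integrals $\int_\Omega \operatorname{D}_k\sigma\,\operatorname{D}_l\sigma\dx$ and $\int_\Omega(\partial^2 u/\partial\bmt_i^2)^Tu\dx$, each of which is a number determined by the smooth exact solution alone. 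Consequently the identical constant $C$ governs the $h^2$ term on both $\Th$ and the coarse uniform mesh $\mathcal{T}_{2h}$.

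Next I would evaluate the expansion at the two mesh levels, obtaining $\lambda-\lambda_h=h^2C+\mathcal{O}(h^4|\ln h|)$ and $\lambda-\lambda_{2h}=4h^2C+\mathcal{O}(h^4|\ln h|)$, where the coarse-mesh remainder stays $\mathcal{O}(h^4|\ln h|)$ since $(2h)^4|\ln(2h)|\lesssim h^4|\ln h|$. Substituting into \eqref{(3.58)} gives
\begin{equation*}
\lambda-\lambda_h^{\mathrm{EXP}}=\frac{4(\lambda-\lambda_h)-(\lambda-\lambda_{2h})}{3}=\frac{4h^2C-4h^2C}{3}+\mathcal{O}(h^4|\ln h|)=\mathcal{O}\big(h^4|\ln h|(\Vert{\sigma}\Vert_{3,\Omega}^2+\|u\|_{4,\Omega}^2)\big).
\end{equation*}
Applying this once with $\lambda_h=\CRlam$ and once with $\lambda_h=\ECRlam$ and adding the two bounds yields the claimed estimate.

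The only genuine obstacle is conceptual rather than computational: one must be sure that the leading coefficient $C$ is literally the same number on the two nested meshes, and this is precisely where the hypothesis that $\lambda$ is simple enters. For a simple eigenvalue the corresponding exact eigenpair $(\lambda,\bmu,p)$ invoked in Theorem \ref{CRexpthem} is unique up to the normalization $\|\bmu\|_{0,\Omega}=1$, so $C$ is unambiguously pinned down by $\lambda$ and is shared by the expansions on $\Th$ and on $\mathcal{T}_{2h}$. Were $\lambda$ of multiplicity greater than one, the discrete eigenvalues on the two meshes could approximate different members of the eigenspace $M(\lambda)$, the coefficients $C$ associated to those members would in general differ, and the cancellation of the $h^2$ term in the extrapolation would break down. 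Once the common value of $C$ is secured, the proof reduces to the elementary algebra displayed above.
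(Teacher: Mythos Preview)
Your proposal is correct and follows essentially the same approach as the paper: the paper merely notes that $F_1(\sigma,\Omega)$, $F_2(\sigma,\Omega)$ and $F_3(u,\Omega)$ are independent of $h$ and then states that Theorem \ref{ECRexpthem} is a direct application of Theorem \ref{CRexpthem}, while you have filled in the standard Richardson extrapolation algebra and the reason why simplicity of $\lambda$ guarantees a common coefficient $C$ on the two nested meshes. Your argument that each $F_i(\cdot,\Omega)$ collapses to a fixed combination of global integrals via Lemma \ref{P1expand} is exactly what underlies the paper's one-line justification.
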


\begin{remark}
If $\lambda$ is a multiple eigenvalue, approximate eigenfunctions on triangulations with different mesh size may approximate to different functions in eigenfunction space $M(\lambda)$. Then, the asymptotic expansion of eigenvalues in Theorem \ref{CRexpthem} cannot lead to a theoretical estimate of extrapolation eigenvalues in \eqref{(3.58)}. Some numerical tests in Section 5 show that extrapolation algorithm can also improve the accuracy of multiple eigenvalues to $\mathcal{O}\left(h^4\right)$ if the eigenfunction is smooth enough.
\end{remark}

\section{Numerical examples}\label{Numerical}
In this section, we aim to verify the superconvergence of CR and ECR element for Stokes source problem, and the efficiency of extrapolation algorithm \eqref{(3.58)} for Stokes eigenvalue problem.

In the following numerical experiments,  the unit domain $\Omega:=(0,1)^2$ is always used. Initial triangulation  $\mathcal{T}_{1}$ consists of two isosceles right triangles.
%obtained by splitting the unit square with a straight line from the top left corner to the bottom right.
And we refine triangulation $\mathcal{T}_{i}~(i \ge 1)$ into a half-sized triangulation uniformly to get $\mathcal{T}_{i+1}$. 
For brevity, we abbreviate the norm $\|\cdot\|_{0,\Omega}$ as $\|\cdot\|$  in the following. 

% Then the  triangulation requirement for superconvergence results in Theorem~\ref{th:superconvergence} is met.

\subsection{Example 1}
	Consider Stokes source problem \eqref{source} with the source term $f$ determined by the  exact solution $\bmu^f \!= \!(-\frac{\partial \varphi}{\partial y},\frac{\partial \varphi}{\partial x})^T$, $ p^f \!=\! \cos(\pi x)\sin(\pi y) $ and $\varphi(x,y)=(\sin\pi x)^2(\sin\pi y)^2 \exp(x+2y)$.
%	$f \!= \!(\!-  \varphi''(x)\varphi'(y) - \varphi(x)\varphi'''(y) -1,
%	\! \varphi'''(x)\varphi(y) + \varphi'(x)\varphi''(y) - 1)^T,$
%	where $\varphi(\xi)\! =\! \xi^2 (\xi - 1)^2$ for $\xi \in (0,1)$, and $\varphi',\varphi'',\varphi'''$ are the  derivatives of $\varphi$.  Then the  % exact solution $\bmu^f \!= \!( \varphi(x)\varphi'(y),- \varphi'(x)\varphi(y))^T$ and $ p^f \!=\! x+ y-1.$ 
Define $\|\cdot\|_f = \|\cdot\|/ \|f\|$, which is equivalent to $\|\cdot\|$ as $f$ is determined.   Figure~\ref{fig:5-1a} plots the error of the approximate pressure $\|p-\CRp^f\|_f$ and the velocity $\|\nabla_h(u-\CRu^f)\|_f$, the postprocessed pressure  $\|p-\Kh \CRp^f\|_f$ and the  postprocessed velocity  $\|\nabla_h(u-\Kh\CRu^f)\|_f$, and also the error of the approximate pressure and the velocity with respect to the corresponding pseudostress interpolation  $\|\CRp^f - \PiuCR \sigma^f\|_f$ and $\|\nabla_h u-\Pip\sigma^f)\|_f$ for the CR element, and Figure~\ref{fig:5-1b} plots those for the ECR element. The numerical results in Figure~\ref{fig5.1} coincide with the optimal supercloseness and optimal superconvergence  in Theorem~\ref{th:superclose} and Theorem~\ref{th:superconvergence}, respectively.

\begin{figure}[tbhp]
	\centering
 	\subfloat[CR element]{\label{fig:5-1a}\includegraphics[width=6cm, height=5cm]{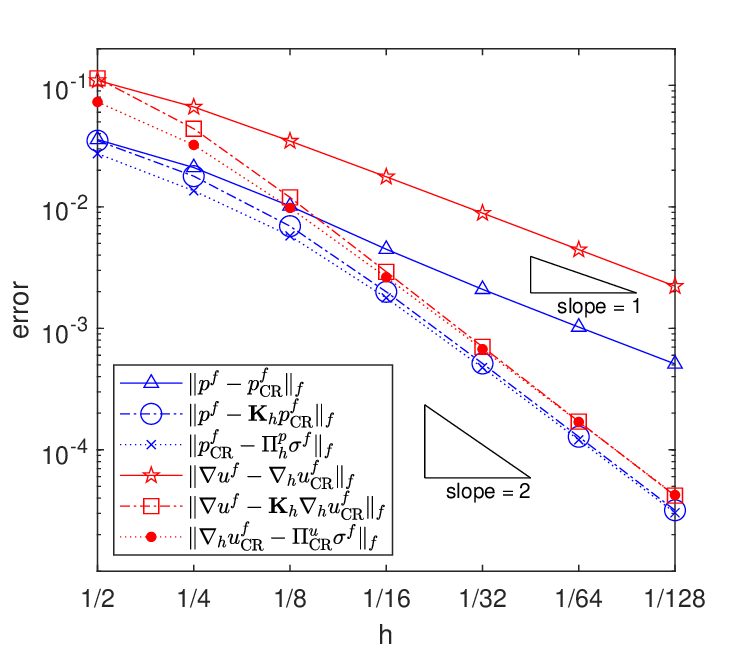}}
	\subfloat[ECR element]{\label{fig:5-1b}\includegraphics[width=6cm, height=5cm]{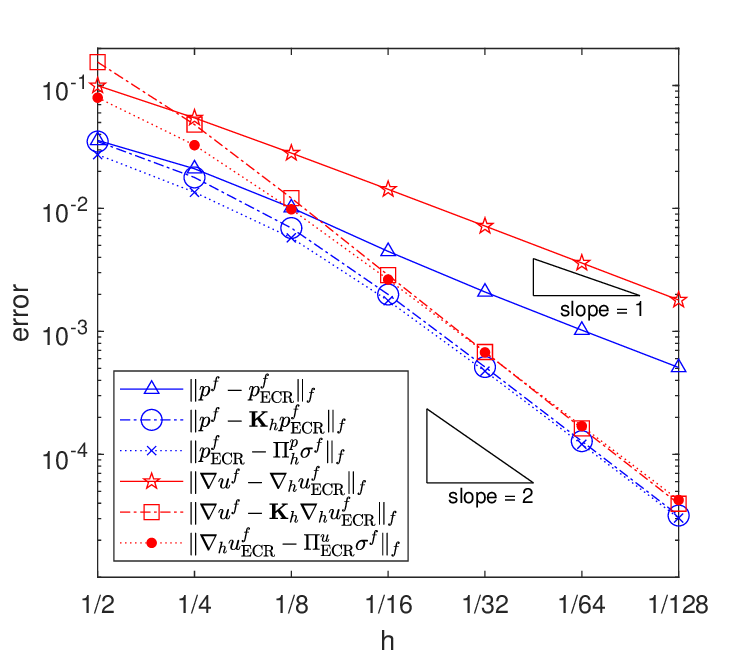}}
	\caption{Errors of CR element and ECR element for the Stokes source problem.}
	\label{fig5.1}
\end{figure}

\subsection{Example 2}
Consider the stokes eigenvalue problem \eqref{sto1} on unit domain. Since the exact eigenvalues are not known, we use element pair $(P_3,P_2)$ on $\mathcal{T}_{10}~(h=1/512)$ to compute the reference eigenvalues $\lambda_1 = 52.344691169,$ $\lambda_2 = 92.124393972 $ and
$\lambda_3 =  92.124393972,$ where $\lambda_2 = \lambda_3$ are multiple eigenvalues.
The relative errors of the first approximate eigenvalues of CR element, ECR element and their corresponding extrapolation eigenvalues on uniform triangulations $\mathcal{T}_i$ are presented in Table~\ref{tab6.1}, and the error results of multiple eigenvalues are shown in Figure~\ref{fig6.2}.
%	This figure implies the optimal convergence rate 4 of the extrapolation eigenvalues $\lambda_{CR,1}^{EXP}$ and $\lambda_{ECR,1}^{EXP}$.
Table~\ref{tab6.1} and Figure~\ref{fig6.2} show that extrapolation algorithm for the CR element and ECR element improves the convergence rate of both single eigenvalues and multiple eigenvalues to a higher rate 4. The  numerical observation above directly verify the conclusions in Theorem~\ref{ECRexpthem} and its remark.
	\begin{table}[!ht]
	% table caption is above the table
	\begin{center}
	\caption{The relative errors of the first eigenvalues and the extrapolation eigenvalues.}
	\label{tab6.1}       % Give a unique label
	% For LaTeX tables use
	\scalebox{0.95}{\begin{tabular}{lllllllll}
			\hline\noalign{\smallskip}
			& $\lambda_{\mathrm{CR},1} $ &rate & $\lambda_{\mathrm{CR},1}^{\mathrm{EXP}} $ &rate  & $\lambda_{\mathrm{ECR},1} $ &rate & $\lambda_{\mathrm{ECR},1}^{\mathrm{EXP}} $  &rate \\
			\noalign{\smallskip}\hline\noalign{\smallskip}
			$\mathcal{T}_3$  &1.1808e-01 &--   &-- 		    &--   &3.5002e-01 &--   &--         &--  	\\
			$\mathcal{T}_4$  &3.2962e-02 &1.84 &4.5893e-03  &--   &9.0697e-02 &1.95 &4.2548e-03 &--  	\\
			$\mathcal{T}_5$  &8.9271e-03 &1.88 &9.1544e-04  &2.33 &2.2723e-02 &2.00 &6.4759e-05 &6.04  \\
			$\mathcal{T}_6$  &2.3015e-03 &1.96 &9.3015e-05  &3.30 &5.7247e-03 &1.99 &5.8595e-05 &0.14  \\
			$\mathcal{T}_7$  &5.8063e-04 &1.99 &6.9912e-06  &3.73 &1.4354e-03 &2.00 &5.6166e-06 &3.38  \\
			$\mathcal{T}_8$  &1.4551e-04 &2.00 &4.7022e-07  &3.89 &3.5915e-04 &2.00 &4.0567e-07 &3.79  \\
			$\mathcal{T}_9$  &3.6400e-05 &2.00 &3.0000e-08  &3.97 &8.9807e-05 &2.00 &2.6000e-08 &3.96  \\
			\hline\noalign{\smallskip}
	\end{tabular}}
	\end{center}
\end{table}

\begin{figure}[!ht]
	\centering
	\subfloat[CR element]{\label{fig:5-2a}\includegraphics[width=6cm, height=5cm]{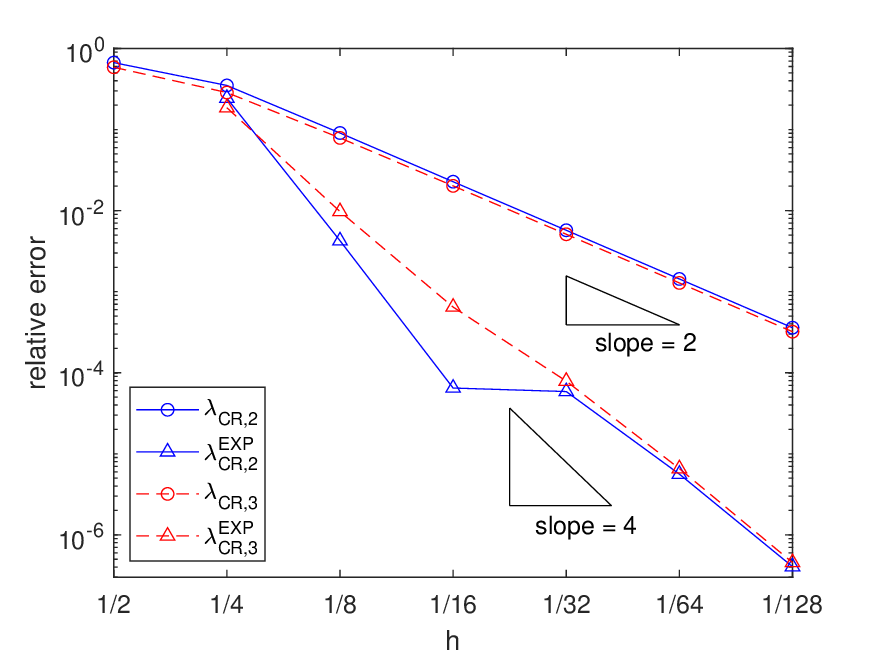}}
	\subfloat[ECR element]{\label{fig:5-2b}\includegraphics[width=6cm, height=5cm]{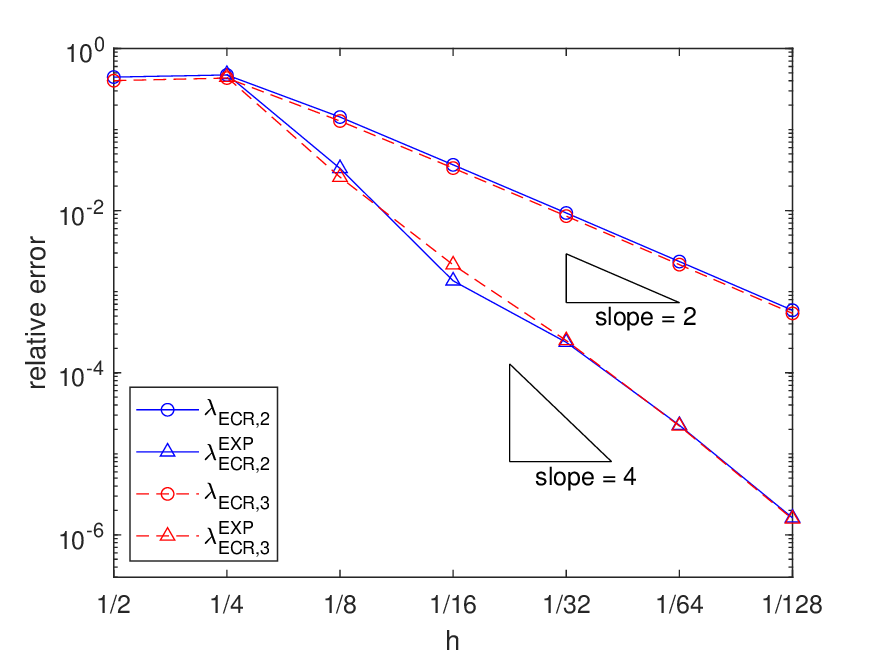}}
	\caption{The relative errors of the multiple eigenvalues from CR element and ECR element.}
	\label{fig6.2}
\end{figure}
%\begin{figure}
%	\centering
%	\subfigure[CR element]{
%		\begin{minipage}[t]{0.48\textwidth}
%			\includegraphics[width=8cm, height=6cm]{fig5-3-1.eps}
%	\end{minipage}}
%	\subfigure[ECR element]{
%		\begin{minipage}[t]{0.48\textwidth}
%			\includegraphics[width=8cm, height=6cm]{fig5-3-2.eps}
%	\end{minipage}}
%	\caption{The relative errors of the first eigenvalue for CR element and ECR element.}
%	\label{fig6.3}
%\end{figure}
%
%\begin{figure}
%	\centering
%	\subfigure[CR element]{
%		\begin{minipage}[t]{0.48\textwidth}
%			\includegraphics[width=8cm, height=6cm]{fig5-4-1.eps}
%	\end{minipage}}
%	\subfigure[ECR element]{
%		\begin{minipage}[t]{0.48\textwidth}
%			\includegraphics[width=8cm, height=6cm]{fig5-4-2.eps}
%	\end{minipage}}
%	\caption{The relative errors of the multiple eigenvalues for CR element and ECR element.}
%	\label{fig6.4}
%\end{figure}

\section*{Acknowledgments}
The authors are greatly indebted to Professor Jun Hu from Peking University for many useful discussions and the guidance.  

\iffalse
\section{Appendix}
\begin{lemma}
\label{equivenceSandu}
There hold that
\begin{align}
\label{equivalenceCR2}
\nabla_h\CRuS =&\quad \Dev\RTsigSt + \frac{1}{2}(I-\Pi_h^0)\Tr(\RTsigSt) \bid+ \frac{\lambda\Pi_h^0u}{2}\otimes\left(\bmx-\MK\right),\\
\label{equivalenceECR2}
\nabla_h\ECRuS=&\quad \Dev\RTsigSt+\frac12(I-\Pi_h^0)\Tr(\RTsigSt)\bid.
\end{align}
\end{lemma}

\fi

\bibliographystyle{plain} 
\bibliography{sample}
\end{document}